\definecolor{darkgreen}{cmyk}{1,0,1,.2}
\definecolor{m}{rgb}{1,0.1,1}
\definecolor{green}{cmyk}{1,0,1,0}
\definecolor{test}{rgb}{1,0,0}
\definecolor{cmyk}{cmyk}{0,1,1,0}
\long\def\red#1{\textcolor {red}{#1}}
\long\def\green#1{\textcolor {green}{#1}}
\long\def\m#1{\textcolor {m}{#1}}
\newtheorem{Equation}{}[section]
\newtheorem{corollary}[Equation]{Corollary}
\newtheorem{definition}[Equation]{Definition}
\newtheorem{lemma}[Equation]{Lemma}
\newtheorem{proposition}[Equation]{Proposition}
\newtheorem{problem}[Equation]{Problem}
\newtheorem{remark}[Equation]{Remark}
\newtheorem{theorem}[Equation]{Theorem}
\def\ch{\operatorname{ch}}
\def\codim{\operatorname{codim}}
\def\Diff{\operatorname{Diff}}
\def\Mor{\operatorname{Mor}}
\def\Dom{\operatorname{Dom}}
\def\Aut{\operatorname{Aut}}
\def\SO{\operatorname{SO}}
\def\SU{\operatorname{SU}}
\def\reg{\operatorname{reg}}
\def\av{\operatorname{av}}
\def\Dom{\operatorname{Dom}}
\def\ind{\operatorname{ind}}
\def\End{\operatorname{End}}
\def\geo{\operatorname{geo}}
\def\Id{\operatorname{Id}}
\def\Hom{\operatorname{Hom}}
\def\Im{\operatorname{Im}}
\def\Ind{\operatorname{Ind}}
\def\Ker{\operatorname{Ker}}
\def\mod{\operatorname{mod}}
\def\Supp{\operatorname{Supp}}
\def\Spin{\operatorname{Spin}}
\def\Tr{\operatorname{Tr}}
\def\STr{\operatorname{STr}}
\def\tr{\operatorname{tr}}
\def\C{\mathbb C}
\def\D{\mathbb D}
\def\H{\mathbb H}
\def\N{\mathbb N}
\def\R{\mathbb R}
\def\S{\mathbb S}
\def\Z{\mathbb Z}
\def\B{\mathbb B}
\def\Q{\mathbb Q}
\def\cA{{\mathcal A}}
\def\maA{{\mathcal A}}
\def\maB{{\mathcal B}}
\def\maD{{\mathcal D}}
\def\maE{{\mathcal E}}
\def\maF{{\mathcal F}}
\def\maM{{\mathcal M}}
\def\cG{{\mathcal G}}
\def\maG{{\mathcal G}}
\def\cG{{\mathcal G}}
\def\cH{{\mathcal H}}
\def\maK{{\mathcal K}}
\def\maN{{\mathcal N}}
\def\maR{{\mathcal R}}
\def\maS{{\mathcal S}}
\def\maQ{{\mathcal Q}}
\def\maU{{\mathcal U}}
\def\cU{{\mathcal U}}
\def\what{\widehat}
\def\wtit{\widetilde}
\def\tD{\widetilde{D}} 
\def\tDp{\widetilde{D'}} 
\def\tF{\widetilde{F}} 
\def\tPhi{\widetilde{\Phi}}
\def\tH{\widetilde{H}}
\def\tX{\widetilde{X}}
\def\tG{\widetilde{G}}
\def\tF{\widetilde{F}} 
\def\tO{\widetilde{\Omega}}
\def\tM{\widetilde{M}}
\def\tS{\widetilde{S}}
\def\tT{\widetilde{T}}
\def\tX{\widetilde{X}}
\def\tZ{\widetilde{Z}}
\def\tg{\widetilde{g}}
\def\tQ{\widetilde{Q}}
\def\tY{\widetilde{Y}}
\def\tW{\widetilde{W}}
\def\tN{\widetilde{N}}
\def\tm{\widetilde{m}}
\def\dd{\displaystyle}
\def\pa{\partial}
\def\ep{\epsilon}
\begin{document}



\title[Relative $L^2$ index theorem] 
{The relative $L^2$ index theorem for Galois coverings }


\author[M-T. Benameur]{Moulay-Tahar Benameur}
\address{Institut Montpellierain Alexander Grothendieck, UMR 5149 du CNRS, Universit\'e de Montpellier}
\email{moulay.benameur@umontpellier.fr}

\thanks{MSC (2010) 53C12, 57R30, 53C27, 32Q10. \\
Key words: positive scalar curvature,  relative index, $L^2$ index theorem, spin structure, complete manifold.}

\begin{abstract}
Gievn a Galois covering of complete spin manifolds where the base metric has PSC near infinity, we prove that  for small enough $\ep>0$, the $\ep$ spectral projection of the Dirac operator has finite trace in the Atiyah von Neumann algebra. This allows us to define the $L^2$ index  in the even case and we prove its compatibility with the Xie-Yu higher index and deduce  $L^2$ versions of the classical Gromov-Lawson relative index theorems. Finally, we briefly discuss some Gromov-Lawson $L^2$ invariants. 
 \end{abstract}
\maketitle
\tableofcontents

%

\section{Introduction}

In \cite{GromovLawson}, M. Gromov and B. Lawson introduced in their investigation of the topological space of metrics of positive scalar curvature on a given smooth manifold $M$, the notion of complete metrics having positive scalar curvature (PSC in the sequel) near infinity. They proved in particular that the Dirac operator $D_g$ associated with  a  spin structure  which is compatible with such metric has finite dimensional kernel. This allowed  them to introduce an integer invariant for such metrics on even dimensional spin manifolds, which is the Fredholm index of the Dirac operator $D_g^+$ acting from the positive half-spinors to the negative ones, exactly as for closed spin manifolds, we denote this invariant by $\Spin (M, g)$.  Despite the closed case, the invariant $\Spin (M, g)$ depends on $g$ and is not a topological invariant, although it does not depend on perturbations of the geometric data over compact subspaces. An already interesting class of examples where such invariant plays a significant part corresponds to the ``small'' class of complete manifolds with a riemannian cylindrical end, where the invariant $\Spin (M, g)$ can be shown to coincide with  the Atiyah-Patodi-Singer index for a corresponding compact spin manifold with boundary \cite{APS1}. This observation allows to relate $\Spin (M, g)$ with some standard spectral invariants but convinces as well that there will certainly be no simple formula for $\Spin (M, g)$ in the general case.   The relative index theorem remedies for this by giving a formula for the difference of such indices when one has identifications near infinity. Gromov and Lawson actually  proved as well their so-called  $\Phi$-relative index theorem  \cite{GromovLawson} that we briefly review below, see also \cite{LawsonMichelsohn}. 

Assume first that $E$ is some hermitian bundle, over the complete spin riemannian manifold $(M, g)$ with $g$ having PSC near infinity, which has a connection $\nabla^E$ which is flat near infinity, then the twisted Dirac operator $D_g\otimes E$ has again finite dimensional kernel and in the even case, the index of $D_g^+\otimes E$, denoted $\Spin (M, g; E)$ is well defined. The simplest statement of the relative index theorem is  the following formula where $K$ is any compact subspace of $M$ such that off $K$, $g$ has PSC and $E$ is flat:
$$
\Spin (M, g; E) - \dim (E)\, \Spin (M, g) =  \int_K \widehat{A} (M, g) \ch_{>0} (\nabla^E),
$$
We have denoted as usual $\widehat{A} (M, g)$ the $\widehat{A}$-polynomial in the Pontryagin forms of the Levi-Civita connection associated with $g$, and $\ch_{>0} (\nabla^E):= \ch (\nabla^E) - \dim (E)$ where $\ch (\nabla^E)$ denotes the Chern character polynomial in the curvature of  the hermitian connection $\nabla^E$.   An obvious corollary is that the LHS is an obstruction to extending $\nabla^E$ to a flat connection on $M$. The general relative index theorem gives a similar formula for the difference of the indices of two generalized Dirac operators on different smooth complete manifolds, such that the operators agree near infinity via some isometry. In this case the individual indices don't make sense in general, however the relative index (difference of the virtual two indices)  still makes sense by replacing each manifold by some closed manifold obtained by chopping off the isometric open subspaces and attaching the same compact manifold with boundary to each of them, and then taking the difference of the resulting indices, see \cite{GromovLawson}[page 119]. Then a similar relative index theorem holds by using well chosen parametrices for the generalized Dirac operators. 
Finally, they also gave another  version: the so-called Gromov-Lawson $\Phi$-relative index theorem. Here,  one assumes that both generalized Dirac operators are invertible near infinity so that the individual indices do make sense, but they don't assume anymore that the identification of the operators holds off some compact space but only on some union $\Phi$ of  connected components of a neighborhood of infinity. The $\Phi$-relative index is then defined similarly, but now one has to chop off the isometric parts and just glue the two remaining manifolds so that the resulting  generalized Dirac operator is invertible near infinity, therefore has a well defined index. The index formula in this $\Phi$-relative case, identifies the index of this latter resulting generalized Dirac operator with the difference of the indices of the generalized Dirac operators on the original manifolds. 

All these relative index theorems played a crucial role in   their study of enlargeable manifolds and  the moduli  space of metrics of PSC. Gromov and Lawson could for instance deduce obstruction criteria in terms of (non-compact) enlargeability of closed manifolds, but also some similar criteria for complete non compact manifolds, see the nice summary in \cite{LawsonMichelsohn}. Already in the APS cylindrical ends case, it was for instance combined with some topological results of Milnor,  to recover  in \cite{GromovLawson} other non-trivial corollaries. Let us mention for instance their proof of  the infiniteness of the number of connected components of the space of PSC metrics on the $7$-sphere $\S^7$. It is worthpointing out  though that in the cylindrical ends case, all the relative index theorems reviewed above become corollaries of the APS index theorem for manifolds with boundary, which provides a more precise answer since it gives an index formula for each individual manifold. \\

The present paper is devoted to the $L^2$ version of the Gromov-Lawson relative index theory together with some applications. Our setting will be the category of riemannian Galois $\Gamma$-coverings $\pi: \tM\to M$ over the complete riemannian manifold $M$, with  $\Gamma$ being a discrete countable group and we proceed now to describe some results that we obtained. Given a generalized Dirac operator over $M$, recall that $D^2=\nabla^*\nabla + \maR$ with an explicit zero-th order operator $\maR$. We then consider its lift $\tD$  acting on the $L^2$-sections of the generalized spinors $\tS$ over $\tM$. The operator $\tD$ is affiliated with the Atiyah semi-finite von Neumann algebra $\maM$ of $\Gamma$-invariant bounded operators on $L^2 (\tM, \tS)$. The Atiyah trace on $\maM$ is denoted $\tau$ and corresponds roughly speaking to integration over fundamental domains of the Galois covering.  For any $\ep\geq 0$, we may then consider the spectral projection $P_\ep$ of $\tD$ corresponding to the intervalle $[-\sqrt{\ep}, +\sqrt{\ep}]$, an element of $\maM$. Our first result is the following

\begin{theorem}\label{MainPrel}
Assume that there exists $\kappa_0 >0$ such that $\maR\geq \kappa_0\Id$ off some compact subspace of $M$, then there exists $\ep>0$ such that $P_\ep$ has finite $\tau$-trace. 
\end{theorem}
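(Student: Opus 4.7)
The plan is to combine the Lichnerowicz--Weitzenb\"ock identity on the covering with a cutoff argument, reducing the finiteness of $\tau(P_\ep)$ to the local boundedness of the diagonal of a high negative power of the resolvent of $\tD^2$.

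First, I would transfer the hypothesis to $\tM$: since the covering is a local isometry and $\tD^2 = \wnabla^*\wnabla + \wtit\maR$ with $\wtit\maR := \maR\circ\pi$, one has $\wtit\maR \geq \kappa_0\Id$ off the $\Gamma$-invariant set $\pi^{-1}(K)$, where $K$ is the compact set given by the hypothesis. Choose $\phi \in C_c^\infty(M,[0,1])$ with $\phi\equiv 1$ on a relatively compact open neighborhood $U$ of $K$, and set $\wtit\phi := \phi\circ\pi$. Since $\maR$ is bounded below on the compact closure $\overline{U}$, there is $C>0$ with $\maR + C\phi \geq \kappa_0$ on $M$, hence $\wtit\maR + C\wtit\phi \geq \kappa_0\Id$ on $\tM$. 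The operator $M_{\wtit\phi}$ is a bounded positive element of $\maM$.

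Next, fix $0 < \ep < \kappa_0$. For $\psi$ in the range of $P_\ep$ (which lies in $\Dom(\tD^n)$ for every $n$, as $\tD P_\ep$ is bounded), the inequality $\langle \tD^2\psi,\psi\rangle \leq \ep\|\psi\|^2$ combined with Lichnerowicz yields
$$\ep\|\psi\|^2 \;\geq\; \|\wnabla\psi\|^2 + \langle \wtit\maR\psi,\psi\rangle \;\geq\; \kappa_0\|\psi\|^2 - C\langle M_{\wtit\phi}\psi,\psi\rangle.$$
Sandwiching the quadratic-form inequality $\tD^2 + CM_{\wtit\phi} \geq \kappa_0\Id$ by $P_\ep$ on both sides and using $P_\ep \tD^2 P_\ep \leq \ep P_\ep$ gives the operator inequality
$$(\kappa_0 - \ep) P_\ep \;\leq\; C\, P_\ep M_{\wtit\phi} P_\ep \qquad \text{in } \maM,$$
so, upon applying $\tau$ and using cyclicity,
$$(\kappa_0 - \ep)\,\tau(P_\ep) \;\leq\; C\, \tau\bigl(M_{\sqrt{\wtit\phi}}\, P_\ep\, M_{\sqrt{\wtit\phi}}\bigr).$$

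Finally, I would bound the right-hand side by comparison with a resolvent. The pointwise estimate $\chi_{[-\sqrt\ep,\sqrt\ep]}(\lambda) \leq (1+\ep)^k(1+\lambda^2)^{-k}$ gives, via functional calculus, $P_\ep \leq (1+\ep)^k(1+\tD^2)^{-k}$, whence
$$\tau\bigl(M_{\sqrt{\wtit\phi}}\, P_\ep\, M_{\sqrt{\wtit\phi}}\bigr) \;\leq\; (1+\ep)^k\, \tau\bigl(M_{\sqrt{\wtit\phi}}\,(1+\tD^2)^{-k}\,M_{\sqrt{\wtit\phi}}\bigr).$$
For $k$ large, local elliptic regularity and Sobolev embedding on the complete manifold $\tM$ imply that the Schwartz kernel $a_k(x,y)$ of $(1+\tD^2)^{-k}$ is continuous on $\tM\times\tM$. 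A fundamental-domain computation then identifies the last trace with
$$\int_F \wtit\phi(x)\, \tr a_k(x,x)\, \dvol(x),$$
whose integrand is supported in the compact set $F\cap\pi^{-1}(\operatorname{supp}\phi)$ and bounded there, so the integral is finite and $\tau(P_\ep)<\infty$. The main step requiring care is the local kernel bound on the diagonal of $a_k$: it rests on the completeness of $\tM$ (inherited from $M$) and on the essential self-adjointness of $\tD$ on $C_c^\infty$, but because only the compact set $\pi^{-1}(\operatorname{supp}\phi)\cap F$ enters the computation, local boundedness is enough.
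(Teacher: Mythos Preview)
Your argument is correct and takes a genuinely different route from the paper. The paper offers two proofs. In the main proof (Section~\ref{Proof}) one first lifts a finite-propagation parametrix $\tQ$ from $M$ so that $\tS=\Id-\tQ\tD$ is smoothing, then shows that $p_K\tS$ restricted to $\Im P_\ep$ is bounded below while $p_K\tS$ itself is $\tau$-Hilbert--Schmidt; this forces $P_\ep$ to be $\tau$-compact and hence $\tau$-finite. The appendix proof replaces the parametrix by the bounded transform $\tD(\tD^2+\rho^2)^{-1/2}$ and a von Neumann Rellich lemma to reach the same conclusion. Both arguments pass through the notion of $\tau$-compactness in $\maM$.

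Your approach bypasses parametrices and Rellich-type compactness entirely. The Lichnerowicz inequality, sandwiched by $P_\ep$, gives the operator inequality $(\kappa_0-\ep)P_\ep\le C\,P_\ep M_{\wtit\phi}P_\ep$, and monotonicity of $\tau$ together with the tracial identity $\tau(A^*A)=\tau(AA^*)$ reduces the problem to the finiteness of $\tau\bigl(M_{\sqrt{\wtit\phi}}(1+\tD^2)^{-k}M_{\sqrt{\wtit\phi}}\bigr)$. Because $\wtit\phi$ has $\Gamma$-compact support, this last trace is the integral of a continuous $\Gamma$-invariant function over a set that projects into the compact $\operatorname{supp}\phi\subset M$, and finiteness follows from the local boundedness of the resolvent kernel. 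This is more elementary and yields the explicit bound
\[
\tau(P_\ep)\ \le\ \frac{C(1+\ep)^k}{\kappa_0-\ep}\int_{M}\phi(m)\,\tr a_k(m,m)\,dm,
\]
whereas the paper's arguments are phrased so as to generalize (as the appendix notes) to other semifinite settings such as foliations.
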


In particular, this proves in the even case that the $L^2$-index is well defined by
$$
\Ind_{(2)} (\tD^+) = \dim_\Gamma (\Ker \tD^+) - \dim_\Gamma (\Ker \tD^-).
$$
In the case where $M$ is a  spin manifold and $D=D_g$ is the spin-Dirac operator, this shows that if $g$ has PSC near infinity then the $L^2$-index is well defined. It will be denoted in this case $\Spin_{(2)} (\tM, g)$. When $M=N\times \R$ for a closed spin odd dimensional manifold $N$, we deduce the $L^2$ version of the Gromov-Lawson invariant $i(N; g, g')$ that we denote by $i_{(2)} (\tN; g, g')$ for any metrics $g$ and $g'$ with PSC. 

In \cite{XieYu}, Xie and Yu defined a higher relative index living in the $K$-theory of the group $C^*$-algebra and proved a higher version of the $\Phi$-relative index theorem. We have thus privileged to use the Xie-Yu approach to deduce our $L^2$ versions of the relative index theorems. Inorder to achieve this program, we were led to prove the compatibility of our $L^2$-index with the higher one through the usual regular trace on the $C^*$-algebra of $\Gamma$. Inorder to prove this compatibility result, we use results proved in \cite{BenameurRoy} together with the crucial property of finite $\tau$-trace of $P_\ep$, for some $\ep>0$. So our second result can be stated as follows

\begin{theorem}\label{CompatibilityPrel}
Assume that $M$ is even dimensional and denote by $\maD_r$ the regular Michschenko-Fomenko Dirac operator associated with the generalized Dirac operator $D$ and by $\Ind(\maD^+_r)$ its Xie-Yu higher index in $K_0(C_r^*\Gamma)$. If $\tau^{\reg}_{*}: K_0(C^*_r\Gamma) \to \R$ is the group morphism induced by the regular trace $\tau^{\reg}$, then 
$$
\tau^{\reg}_* \left( \Ind(\maD^+_r)\right) = \Ind_{(2)} (\tD^+).
$$
\end{theorem}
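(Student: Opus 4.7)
The plan is to represent both sides of the claimed equality in terms of a small spectral projection of $\tD$ (and its Mishchenko--Fomenko counterpart), and then identify the two traces via the correspondence between the Atiyah trace on $\maM$ and the regular trace on $C^*_r\Gamma$. Fix $\ep>0$ small enough so that Theorem \ref{MainPrel} applies and with $\ep<\kappa_0$. Since $\tD^2$ commutes with the $\Z_2$-grading on $\tS$, the projection $P_\ep$ splits as $P_\ep^++P_\ep^-$, with both summands of finite $\tau$-trace, and similarly $\Ker \tD$ splits according to the grading.

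First, I would represent the Xie--Yu class by small spectral projections. Because $\maR\ge\kappa_0\Id$ off a compact set, the Mishchenko--Fomenko operator $\maD_r$ is invertible modulo the compact ideal $\maK\otimes C^*_r\Gamma$ of the standard module; using the finite-propagation parametrix construction of \cite{XieYu} together with the functional-calculus arguments of \cite{BenameurRoy}, for small $\ep$ the spectral projection $Q_\ep:=\chi_{[0,\ep]}(\maD_r^2)$ lies in this compact ideal, splits as $Q_\ep^++Q_\ep^-$ according to the grading, and represents the Xie--Yu class by a graded $K$-theoretic McKean--Singer argument:
$$
\Ind(\maD^+_r) \;=\; [Q_\ep^+] - [Q_\ep^-] \quad\text{in}\quad K_0(C^*_r\Gamma).
$$

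Second, I would invoke the Morita-type identification between $C^*_r\Gamma$-compact operators on the Mishchenko--Fomenko module and $\tau$-trace-class $\Gamma$-invariant operators on $L^2(\tM,\tS)$. Under this identification $\maD_r$ corresponds to $\tD$, the projection $Q_\ep$ corresponds to $P_\ep$, and $\tau^{\reg}$ is intertwined with $\tau$ on their common domain; this is the content of \cite{BenameurRoy}, whose applicability hinges precisely on the finite-$\tau$-trace of $P_\ep$ provided by Theorem \ref{MainPrel}. It follows that
$$
\tau^{\reg}\bigl(Q_\ep^\pm\bigr) \;=\; \tau\bigl(P_\ep^\pm\bigr).
$$

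Third and finally, I would conclude by a semi-finite McKean--Singer argument in $\maM$: on the range of $P_\ep$ restricted to the non-zero spectrum of $\tD$, the polar decomposition of $\tD$ furnishes a $\tau$-trace-preserving isomorphism between the $+$ and $-$ components, so these contributions cancel in the difference $\tau(P_\ep^+)-\tau(P_\ep^-)$, leaving only the kernels:
$$
\tau(P_\ep^+) - \tau(P_\ep^-) \;=\; \dim_\Gamma(\Ker\tD^+) - \dim_\Gamma(\Ker\tD^-) \;=\; \Ind_{(2)}(\tD^+).
$$
Chaining the three steps yields the theorem. The principal obstacle is the second step: bridging the two functional-analytic frameworks so that $Q_\ep$ and $P_\ep$ genuinely correspond and that $\tau^{\reg}$ matches $\tau$ on them. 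Without the finite $\tau$-trace of $P_\ep$ guaranteed by Theorem \ref{MainPrel}, neither would $Q_\ep$ lie in $\maK\otimes C^*_r\Gamma$, nor would the trace identification make sense; this is why Theorem \ref{MainPrel} is the hinge of the whole argument.
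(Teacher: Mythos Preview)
Your third step is fine and matches the paper's endgame, but there is a genuine gap in the first two steps. You form the spectral projection $Q_\ep:=\chi_{[0,\ep]}(\maD_r^2)$ on the Hilbert $C^*_r\Gamma$-module $\maE_r$ and assert that it lies in the compact ideal and represents $\Ind(\maD_r^+)$. The problem is that Borel functional calculus is not available for regular self-adjoint operators on Hilbert $C^*$-modules: only \emph{continuous} functional calculus is. There is no reason for $\chi_{[0,\ep]}(\maD_r^2)$ to exist as an adjointable operator, let alone a compact one; indeed on the $C^*$-module side the spectrum of $\maD_r^2$ need not have a gap near $0$, and nothing in \cite{XieYu} or \cite{BenameurRoy} produces such a projection. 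Your claim that the finite $\tau$-trace of $P_\ep$ forces $Q_\ep\in\maK\otimes C^*_r\Gamma$ has the implication backwards: the map $\Phi_{\reg}$ sends $\maK(\maE_r)$ into $\tau$-compacts, not conversely.

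The paper's proof is organized precisely to avoid this issue. On the Hilbert-module side it uses only the \emph{continuous} Xie--Yu construction $\maG^+=(\maD_r^+\maD_r^-+\rho^2)^{-1/2}\maD_r^+$ to represent $\Ind(\maD_r^+)$, then transports $\maG^+$ through $\Phi_{\reg}$ to the von Neumann algebra $\maM$, obtaining $\tG^+=(\tD^+\tD^-+\rho^2)^{-1/2}\tD^+$. Only \emph{after} landing in $\maM$---where Borel functional calculus is legitimate---does the paper build a parametrix $\tH=\tQ_\ep(\tD^2+\rho^2)^{1/2}$ with $\tQ_\ep=f_\ep(\tD)$ and $f_\ep(x)=x^{-1}\chi_{\{|x|>\sqrt{\ep}\}}$, so that the remainders are $P_\ep^\pm$ (and a conjugate thereof). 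Theorem \ref{MainPrel} then ensures these remainders are $\tau$-trace class, whence $\tau_*\Ind_\maM(\tG^+)=\tau(P_\ep^+)-\tau(P_\ep^-)$, and normality of $\tau$ (equivalently your polar-decomposition cancellation) gives $\Ind_{(2)}(\tD^+)$. The trace compatibility $\tau_*\circ\Phi_{\reg,*}=\tau^{\reg}_*\circ M$ closes the loop. The asymmetry between the $C^*$-side (continuous calculus only) and the von Neumann side (Borel calculus available) is the whole point; your proposal erases it.
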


Applying this theorem together with a construction of the $\Phi$-relative $L^2$-index, we could deduce the following $\Phi$-relative $L^2$ index theorem:

\begin{theorem}\label{PhiRelativePrel}
Assume that we have two Galois $\Gamma$-coverings $\tM\to M$ and $\tM'\to M'$ with generalized Dirac operators which both satisfy the invertibility condition near infinity $\maR\geq \kappa_0 \Id$ as in \cite{GromovLawson}, and assume furthermore that there exist unions $\Phi$ and $\Phi'$ of connected components of a neighborhood of infinity in $M$  and $M'$ respectively which are identified together with their $\Gamma$-coverings so that the generalized Dirac operators $D$ and $D'$ over $\Phi$ and $\Phi'$ are conjugate. Then the  $\Phi$-relative $L^2$-index $\Ind_{(2)} (\tD, \tDp; \Phi)$ is a well defined real number and we have the $\Phi$-relative $L^2$ index formula
$$
\Ind_{(2)} (\tD, \tDp; \Phi) = \Ind_{(2)} (\tD^+) - \Ind_{(2)} ({\tDp}^+). 
$$
\end{theorem}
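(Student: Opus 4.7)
The plan is to reduce the statement to the higher $\Phi$-relative index identity of Xie-Yu, and then transport that identity to $\R$ via the compatibility result of Theorem \ref{CompatibilityPrel}.

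First, I would define $\Ind_{(2)}(\tD,\tDp;\Phi)$ by a Gromov-Lawson cut-and-paste construction. Choose a compact separating hypersurface $N \subset \Phi$; its image $N' \subset \Phi'$ under the given identification plays the symmetric role in $M'$. Cut $M$ along $N$ and $M'$ along $N'$, keep the pieces $M_1$ and $M_1'$ not containing the identified end, reverse orientation on $M_1'$, and glue them along $N \equiv N'$ to obtain a complete even-dimensional manifold $M''$. Since the generalized Dirac operators $D$ and $D'$ agree on the collar via the given conjugation, they piece together to a generalized Dirac operator $D''$ on $M''$, whose $\Gamma$-covering lift I denote $\tD''$. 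The hypothesis $\maR \geq \kappa_0\Id$ persists on $M''$ off a compact subspace, so Theorem \ref{MainPrel} applies and $\Ind_{(2)}(\tD''{}^+)$ is a well-defined real number. One then sets $\Ind_{(2)}(\tD,\tDp;\Phi) := \Ind_{(2)}(\tD''{}^+)$.

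Second, I would invoke the Xie-Yu higher $\Phi$-relative index theorem from \cite{XieYu}, applied to the regular Mishchenko-Fomenko twists $\maD_r$, $\maD_r'$, $\maD_r''$ of $D$, $D'$, $D''$ respectively, to obtain the $K$-theoretic identity
\[
\Ind(\maD_r''{}^+) = \Ind(\maD_r^+) - \Ind(\maD_r'{}^+) \quad \text{in } K_0(C^*_r\Gamma).
\]
Applying the real-valued morphism $\tau^{\reg}_*$ and using Theorem \ref{CompatibilityPrel} separately for each of the three operators $D$, $D'$, $D''$ (all of which satisfy the invertibility-at-infinity hypothesis by construction) gives
\[
\Ind_{(2)}(\tD''{}^+) = \Ind_{(2)}(\tD^+) - \Ind_{(2)}(\tDp{}^+),
\]
which, by the definition of the left-hand side, is exactly the asserted $\Phi$-relative $L^2$-index formula.

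The main obstacle I expect is the well-definedness of $\Ind_{(2)}(\tD,\tDp;\Phi)$: one must verify that $\Ind_{(2)}(\tD''{}^+)$ does not depend on the particular separating hypersurface $N$ chosen inside $\Phi$ nor on the precise data used to perform the gluing. This can be settled either by a direct two-step cut-and-paste argument based on Theorem \ref{MainPrel} (any two admissible choices differ only over a compact collar, which contributes nothing to the $\tau$-index thanks to the finiteness of $\tau(P_\ep)$), or more efficiently by transferring the independence question to the $K$-theoretic side, where $\Ind(\maD_r''{}^+) \in K_0(C^*_r\Gamma)$ is already known to be independent of such choices by the Xie-Yu framework. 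Once this point is secured, the rest of the argument is a formal combination of the higher index identity with the compatibility theorem and requires no further analysis.
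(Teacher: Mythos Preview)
Your proposal is correct and follows essentially the same route as the paper: define the $\Phi$-relative $L^2$-index via the Gromov--Lawson cut-and-paste manifold $M''$, invoke the Xie--Yu higher $\Phi$-relative identity in $K_0(C^*_r\Gamma)$, and push it down to $\R$ via $\tau^{\reg}_*$ using the compatibility theorem for each of the three operators. The paper handles the well-definedness of the cut-and-paste construction in the setup preceding the theorem rather than inside the proof, but otherwise your argument matches the paper's line for line.
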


In the special case where the open subspaces $\Phi$ and $\Phi'$ have compact complements in $M$ and $M'$, no need of any invertibility condition near infinity to define the $L^2$ relative index $\Ind_{(2)} (\tD, \tDp)$ for any generalized Dirac operators which are conjugate near infinity. In this case, we could deduce the following Atiyah  relative $L^2$ index formula which extends the classical result proved in \cite{AtiyahCovering}:

\begin{theorem}
Let $\tM\to M$ and $\tM'\to M'$ be two Galois $\Gamma$-coverings with generalized Dirac operators $D$ and $D'$ on $M$ and $M'$ respectively.  Assume that the coverings are identified near infinity so that the lifts $\tD$ and $\tDp$ which are $\Gamma$-equivariantly conjugate near infinity. Then the relative $L^2$ index $\Ind_{(2)} (\tD, \tDp)$ is well defined and agrees with the Gromov-Lawson relative index of the operators $D$ and $D'$ on $M$, i.e.
$$
\Ind_{(2)} (\tD, \tDp) = \Ind (D, D').
$$
\end{theorem}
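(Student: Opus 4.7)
The strategy is to reduce the identity to Atiyah's $L^2$-index theorem for Galois coverings of closed manifolds via the Gromov--Lawson cut-and-paste construction; the compactness of the complements of the region of identification makes no invertibility-at-infinity hypothesis necessary.

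Choose compact submanifolds with boundary $K\subset M$ and $K'\subset M'$ outside of which the $\Gamma$-equivariant isometry identifies $\tD$ with $\tDp$, and let $N_0$ be a compact manifold with boundary realised as a common truncation of the identified end $M\setminus\inter K\simeq M'\setminus\inter K'$. Form the closed Riemannian manifolds
$$\what M := K\cup_{\pa K} N_0,\qquad \what{M'} := K'\cup_{\pa K'} N_0,$$
with induced generalized Dirac operators $\what D$, $\what{D'}$; lift to the Galois $\Gamma$-coverings $\what{\tM}$, $\what{\tM'}$ carrying lifted Dirac operators $\what{\tD}$, $\what{\tDp}$. By construction, the Gromov--Lawson relative index is
$$\Ind(D,D')=\Ind(\what D^+)-\Ind(\what{D'}^+),$$
and I would correspondingly set
$$\Ind_{(2)}(\tD,\tDp):=\Ind_{(2)}(\what{\tD}^+)-\Ind_{(2)}(\what{\tDp}^+),$$
which is meaningful since both $\what{\tM}$ and $\what{\tM'}$ are Galois coverings of closed manifolds. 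Atiyah's $L^2$-index theorem \cite{AtiyahCovering} (or Theorem~\ref{CompatibilityPrel} applied in the closed, trivially-PSC-at-infinity case) yields $\Ind_{(2)}(\what{\tD}^+)=\Ind(\what D^+)$ and analogously for the primed operator; subtracting gives the desired identity $\Ind_{(2)}(\tD,\tDp)=\Ind(D,D')$.

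\textbf{Main obstacle.} The substantive point is that $\Ind_{(2)}(\tD,\tDp)$ must not depend on the choices of $K$, $K'$ and $N_0$. I would handle this by the standard enlargement argument: any two admissible triples are dominated by a common larger one, and the two corresponding constructions differ by gluing the same closed $\Gamma$-covering piece on each side, so additivity of the $L^2$-index on coverings of closed manifolds (inherited from classical additivity together with Atiyah's $\tau$-trace identification) shows that the difference $\Ind_{(2)}(\what{\tD}^+)-\Ind_{(2)}(\what{\tDp}^+)$ is intrinsic. Alternatively, one may encode the relative data in a class in $K_0(C_r^*\Gamma)$ along the lines of Xie--Yu \cite{XieYu}, whose image under $\tau^{\reg}_*$ equals the relative $L^2$-index by Theorem~\ref{CompatibilityPrel}, and whose independence of choices is manifest from its topological construction.
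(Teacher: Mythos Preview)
Your approach is essentially the paper's: reduce to Atiyah's $L^2$-index theorem on a closed manifold via the Gromov--Lawson cut-and-paste. The paper streamlines this by forming a \emph{single} closed manifold $M'' = M_1 \amalg_H M_1'$ (gluing the compact piece of $M$ directly to the compact piece of $M'$ with reversed orientation), so that both $\Ind_{(2)}(\tD,\tDp)$ and $\Ind(D,D')$ are by definition indices on the same closed covering $\tM''\to M''$, and Atiyah applies immediately---no separate well-definedness argument is needed.

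One small slip in your write-up: if $N_0$ is literally a ``truncation of the identified end'' it has \emph{two} boundary components (the inner one $\partial K$ and an outer hypersurface), so $\what M = K\cup_{\partial K} N_0$ is not closed. You want $N_0$ to be an arbitrary compact manifold with $\partial N_0 \cong \partial K$ (as in Gromov--Lawson's page~119 construction), or simply take $N_0 = K'$ with reversed orientation, which recovers the paper's $M''$.
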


When $\Gamma$ is torsion free,  we have the following partial but more precise result. 

\begin{theorem}
Assume that $\maR$ is uniformly positive near infinity and that $\Gamma$ is torsion free with rationally onto maximal Baum-Connes assembly map $K_0(B\Gamma)\to K_0(C^*_m\Gamma)$. Then
$$
\Ind_{(2} (\tD^+) =\Ind (D^+).
$$
\end{theorem}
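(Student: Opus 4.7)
The plan is to lift both the $L^2$ index and the classical Fredholm index to a single higher index class in $K_0(C^*_m\Gamma)$ and compare them via two natural trace morphisms, using the Baum--Connes hypothesis to identify these traces rationally.

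First I would enhance the setup of Theorem \ref{CompatibilityPrel} by working with the maximal Mishchenko--Fomenko operator $\maD_m$ in place of $\maD_r$. Under the invertibility hypothesis $\maR\geq \kappa_0\Id$ near infinity, the Xie--Yu construction adapts to produce a higher index $\Ind(\maD^+_m)\in K_0(C^*_m\Gamma)$ whose image under the canonical morphism $\lambda_*:K_0(C^*_m\Gamma)\to K_0(C^*_r\Gamma)$ equals $\Ind(\maD^+_r)$. On $K_0(C^*_m\Gamma)$ I will then consider two group morphisms: the pulled-back regular trace $\tau^{\reg}_{m,*}:=\tau^{\reg}_*\circ\lambda_*$, and the augmentation $\tau^{\mathrm{triv}}_*:K_0(C^*_m\Gamma)\to\Z$ induced by the trivial representation $\sum_g a_g\,g\mapsto\sum_g a_g$ (well defined on $C^*_m\Gamma$ but not on $C^*_r\Gamma$ in general). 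Theorem \ref{CompatibilityPrel} then gives $\tau^{\reg}_{m,*}(\Ind(\maD^+_m))=\Ind_{(2)}(\tD^+)$, while the fact that the trivial representation collapses the maximal Mishchenko bundle to the trivial line bundle over $M$ yields $\tau^{\mathrm{triv}}_*(\Ind(\maD^+_m))=\Ind(D^+)$.

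Next I would show that the difference $\tau^{\reg}_{m,*}-\tau^{\mathrm{triv}}_*:K_0(C^*_m\Gamma)\to\R$ vanishes on the image of the maximal assembly map $\mu_m:K_0(B\Gamma)\to K_0(C^*_m\Gamma)$. By the Baum--Douglas description, any class in $K_0(B\Gamma)$ is represented by a pair $(N,f:N\to B\Gamma)$ with $N$ a closed spin manifold, and $\mu_m(N,f)$ is the higher index of the spin Dirac operator on the pullback Galois $\Gamma$-cover $\tN\to N$. By Atiyah's $L^2$ index theorem for coverings of closed manifolds, both traces of this higher index return the integer $\Ind(D_N^+)$, so $(\tau^{\reg}_{m,*}-\tau^{\mathrm{triv}}_*)\circ\mu_m=0$. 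Applying the rational surjectivity of $\mu_m$, the real-valued homomorphism $\tau^{\reg}_{m,*}-\tau^{\mathrm{triv}}_*$ vanishes on $K_0(C^*_m\Gamma)\otimes\Q$, hence on $K_0(C^*_m\Gamma)$ itself since its target is torsion-free, and evaluation on $\Ind(\maD^+_m)$ yields the desired equality $\Ind_{(2)}(\tD^+)=\Ind(D^+)$.

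The hard part will be the first step: establishing the maximal analogue of Theorem \ref{CompatibilityPrel} and, in particular, the functorial identity $\tau^{\mathrm{triv}}_*(\Ind(\maD^+_m))=\Ind(D^+)$ when $M$ is non-compact. Both rely on showing that, under uniform positivity of $\maR$ near infinity, the Mishchenko--Fomenko construction is invertible modulo $C^*_m\Gamma$-compacts and that naturality of the higher index in representations of $\Gamma$ carries through in this non-compact Fredholm setting, recovering the Gromov--Lawson operator $D^+$ on $M$ from the trivial-representation quotient of $\maD_m$.
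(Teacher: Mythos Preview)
Your proposal is correct and follows essentially the same route as the paper. The paper also lifts everything to the maximal higher index $\Ind(\maD_m)\in K_0(C^*_m\Gamma)$, applies the two traces $\tau^{\reg}_*$ and $\tau^{\av}_*$ (your $\tau^{\mathrm{triv}}_*$), uses rational surjectivity of $\mu_m$ together with the Baum--Douglas description to reduce to closed manifolds, and invokes Atiyah's covering theorem there. The only cosmetic difference is that the paper writes $\Ind(\maD_m)\otimes 1_\Q$ explicitly as a rational combination $\sum_i q_i\,\mu_\Gamma[Z_i,E_i,f_i]$ and evaluates both traces term by term, whereas you phrase it as vanishing of the homomorphism $\tau^{\reg}_{m,*}-\tau^{\mathrm{triv}}_*$ on the image of $\mu_m$; these are equivalent.

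Two small remarks. First, the ``hard part'' you flag is exactly Theorem~\ref{Compatibility} in the body of the paper, which establishes \emph{both} relations $\tau^{\av}_*(\Ind(\maD_m))=\Ind(D^+)$ and $\tau^{\reg}_*(\Ind(\maD_r))=\Ind_{(2)}(\tD^+)$; the average/trivial case is in fact the easier one, since downstairs there is a genuine spectral gap and one can use the bounded Green operator as parametrix. Second, your Baum--Douglas cycles should be $K$-oriented (spin$^c$) manifolds with an auxiliary bundle $(Z,E,f)$ rather than just spin pairs $(N,f)$, as in the paper; this is needed to generate all of $K_0(B\Gamma)$, but does not affect your argument since Atiyah's theorem applies equally well to twisted Dirac operators on closed manifolds.
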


In the spin case, we obtain the equality $
\Spin_{(2)} (\tM, g) = \Spin (M, g),$
whenever $g$ has uniform PSC near infinity and $\Gamma$ is torsion free with rationally onto maximal Baum-Connes map.
A corollary of this theorem is the corresponding statement for the APS $L^2$ index for coverings with boundaries. See Theorem \ref{AtiyahAPS}. \\

Our $L^2$ index allows to consider for instance the generalized Cheeger-Gromov invariant 
$$
\kappa_\Gamma (M, g) := \Spin_{(2)} (\tM, g) - \Spin (M, g),
$$
which is expected to be trivial for torsion free groups but provides an interesting invariant in general. In the cylindrical ends case considered by Atiyah-Patodi-Singer out of a compact spin manifold $X$ with boundary $Y$ having PSC, this invariant corresponds to the Cheeger-Gromov $L^2$ rho invariant $\rho_\Gamma (Y, g_Y)$, but in general we can only show that it only depends on the geometry near infinity with, so far, no explicit spectral expression for instance.
We recover for instance the well known fact that the Cheeger-Gromov $\rho$  invariant induces a map on the moduli space of connected components of the space of PSC metrics on any closed odd dimensional manifold modulo orientation preserving diffeomorphisms. This $\kappa$ invariant for $M=N\times \R$ as in \cite{GromovLawson} with a Galois $\Gamma$-covering $\tN\to N$, yields the invariant
$$
\kappa_\Gamma (N; g, g') :=  i_{(2)} (\tN; g, g') - i (N; g, g'). 
$$
By the APS theorem this invariant is again just a difference of Cheeger-Gromov $L^2$ rho invariants for the given metrics. When $N$ has trivial real Pontryagin classes, one can extend similarly some results from \cite{KreckStolz}. These and other applications will be carried out in a forthcoming paper. A short appendix gives a different proof of the finite $\tau$-trace of the spectral projection $P_\ep$ now using a von Neumann version of the Rellich lemma for which we provide an independent  proof.


\medskip

{\em{Acknowledgements.}} The author would like to thank B. Ammann, P. Antonini, A. Carey,  P. Carrillo-Rouse, J. Heitsch, V. Mathai and G. Yu  for several discussions.

\medskip

\section{Dirac operators and Atiyah's von Neumann algebra}\label{Preliminaries}

Let $\Gamma$ be a countable finitely generated discrete group and let $\pi: \tM \to M$ be a Galois $\Gamma$-covering over the smooth complete  riemannian manifold $(M, g)$. We endow $\tM$ with the lifted $\Gamma$-invariant metric $\tg$, so that it is also a complete riemannian manifold and $\pi$ is an isometric covering. We fix a hermitian bundle $S$ of generalized spinors over $M$, in the sense of \cite{GromovLawson}[Section I]. Its pull-back to $\tM$ is denoted $\tS$. We fix corresponding  Lebesgue class measures on $M$ and $\tM$ that we denote by $dm$ or $d\tm$ respectively. We then consider the Hilbert spaces of $L^2$-sections of these spinor bundles $S$ and $\tS$ that we denote by $L^2(M, S)$ and $L^2(\tM, \tS)$  respectively. Notice that $L^2 (\tM, \tS)$ is a Hilbert space which is endowed with the unitary representation of $\Gamma$. Let $F\subset \tM$ be an open fundamental domain for the Galois cover. This means for us that  $\gamma F\cap F=\emptyset$ for  any $g\neq e$, and that the collection $(g\overline{F})_{\gamma\in \Gamma}$ is a locally finite cover of $\tM$. We may assume as well that $F$ equals the interior of its closure $\overline{F}$ if needed.  It is then worth pointing out that the subspace $\tM \smallsetminus \cup_{\gamma\in \Gamma} g F$ is $d\tm$-negligible. This choice of fundamental domain $F$ allows to identify the Hilbert $\Gamma$-space  $L^2 (\tM, \tS)$ with the Hilbert $\Gamma$-space $\ell^2\Gamma \otimes L^2(F, \tS)$ where $\Gamma$ acts trivially on $L^2 (F, \tS)\simeq L^2(M, S)$ and by the left regular representation on $\ell^2\Gamma$.  See \cite{AtiyahCovering} for more details on this standard construction.

The von Neumann algebra of bounded $\Gamma$-invariant operators on $L^2 (\tM, \tS)$ will be denote by $\maM$. The elements of $\maM$ are bounded operators $T$ acting on the Hilbert space $L^2 (\tM, \tS)$ which commute with the unitary representation of $\Gamma$. This is a semi-finite von Neumann algebra which is isomorphic to $B(L^2(F, \tS))\otimes \maN \Gamma$ where $\maN\Gamma$ is the regular von Neumann algebra associated with $\Gamma$, say  the weak closure of the left regular representation in $\ell^2\Gamma$. There is a faithful normal semi-finite positive trace $\tau$ on $\maM$ corresponding through the isomorphism $\maM\simeq B(L^2(F, \tS))\otimes \maN \Gamma$ to the trace $\Tr\otimes \tau_e$ where $\tau_e$ is the finite trace on $\maN\Gamma$ given by $\theta\mapsto <\theta(\delta_e), \delta_e>$ with $\delta_e$ being the characteristic function at the neutral element  $e$. $\Tr$ denotes as usual the trace of operators on the Hilbert space $L^2(F, \tS)$. It can also be defined directly as follows, see \cite{AtiyahCovering}.  Denote by $\chi_F$ the characteristic function of the fundamental domain $F$, i.e. the Borel function which equals $1$ on $F$ and $0$ on its complement. Then multiplication by the function $\chi_F$ yields a projection $M_{\chi_F}: L^2(\tM, \tS)\to L^2(\tM, \tS)$ whose image can be identified with $L^2 (F, \tS)$. 
\begin{definition}\cite{AtiyahCovering}\
For any non-negative operator $T\in \maM$,  we set
$$
\tau (T):= \Tr (M_{\chi_F}\circ T \circ M_{\chi_F}) \quad \in [0, +\infty].
$$
\end{definition}

That $\tau$ is a faithful normal semi-finite positive trace on $\maM$ which does not depend on the choice of fundamental domain $F$ is straightforward, see again \cite{AtiyahCovering}. Any operator in $\maM$ which corresponds to a finite sum of elementary tensors $A_i\otimes \theta_i$ with  trace class operators $A_i$ on the Hilbert space $L^2(F, \tS)$ and with $\theta_i\in \maN\Gamma$, will obviously have finite $\tau$-trace. Given a Hilbert $\Gamma$-subspace $H$ of $L^2(\tM, \tS)$, we denote by $P_H$ the $\Gamma$-invariant orthogonal projection  onto $H$, a non-negative idempotent in $\maM$. The $\Gamma$-dimension of $H$, denoted $\dim_\Gamma (H)$, is defined as the trace of the projection $P_H\in \maM$, i.e. 
$$
\dim_\Gamma (H) := \tau (P_H) \quad \in [0, +\infty].
$$
We can define in the same way the $\Gamma$-dimension of any Hilbert $\Gamma$-space which is unitarily equivalent to a Hilbert $\Gamma$-subspace of $L^2(\tM, \tS)$. In particular, given a Hilbert subspace $H_0$ of $L^2(F, \tS)$, the $\Gamma$-dimension of $\ell^2\Gamma\otimes H_0$ is well defined and coincides with the usual dimension of $H_0$.  When $H_0$ is finite dimensional, such Hilbert $\Gamma$-space what is  called in   \cite{Lueck} a {\underline{finitely generated}} Hilbert $\Gamma$-space. \\

When $M$ is even dimensional, the generalized spin bundle $S$ will be assumed to have a $\Z_2$-grading and to split into $S=S^+\oplus S^-$, we then have the corresponding $\Gamma$-equivariant splitting   $\pi^{*} (S)=\tS=\tS^+\oplus \tS^-$.  Let $D$ be  a generalized Dirac operator on $M$ acting on the smooth sections of  $S$, and denote by $\tD$ its $\Gamma$-invariant pull-back to the generalized Dirac operator on $\tM$, acting on the sectiojns of $\tS$. 
Recall that $\tD$ is a $\Gamma$-invariant first order differential operator which is odd for the grading, i.e.
$$
\tD = \left(\begin{array}{cc} 0 & \tD^-\\ \tD^+ & 0\end{array}\right).
$$
The orthogonal connection on $S$ is denoted $\nabla$ and its pull-back connection to $\tS$ is denoted $\widetilde\nabla$. Recall that if $(e_k)_k$ is a local orthonormal basis of tangent vectors then the operator $\nabla^*\nabla$ is given by the formula
$$
\nabla^*\nabla = -\sum_k \left( (\nabla_{e_k})^2 - \nabla_{\nabla_{e_k}e_k}\right)\text{ and similarly for }\widetilde{\nabla}^*\widetilde{\nabla}.
$$
Recall as well that for smooth sections $\sigma, \sigma'$ where at least one of them is compactly supported, one has
$$
\langle \nabla^*\nabla \sigma, \sigma'\rangle = \langle\nabla\sigma, \nabla\sigma'\rangle.
$$
For $s\geq 0$,  the Sobolev space $L^2_s (M, S)$ (resp. $L^2_s (\tM, \tS)$) is defined  as the completion of the space $C_c^\infty (M, S)$ (resp. $C_c^\infty (\tM, \tS)$) of smooth compactly supported sections, for the Sobolev $L^2$-norm
$$
\vert\vert \sigma\vert\vert_s^2 := \sum_{j=0}^s \vert\vert \nabla^j \sigma\vert\vert^2, \quad \sigma\in C_c^\infty (M, S),
$$
and similarly for $\tM$ using the $\Gamma$-invariant pull-back connection $\widetilde{\nabla}$. In particular, $L^2_0 (M, S)= L^2 (M, S)$ and $L^2_0 (\tM, \tS)= L^2 (\tM, \tS)$. We denote by $\Omega^S$ the curvature tensor of the connection $\nabla$ on $S$ given by
$$
\Omega^S (X_1, X_2)= [\nabla_{X_1}, \nabla_{X_2}] - \nabla_{[X_1, X_2]}\text{ for any vector fields } X_1, X_2.
$$
The pull-back $\widetilde{\Omega^S}$ of $\Omega^S$ to $\tM$ is nothing but the curvature tensor of the pull-back conneciton $\widetilde{\nabla}$ on $\tS$.  The following theorem is classical, see also \cite{GromovLawson}[Propositions 2.4 $\&$ 2.5]:

\begin{theorem}\label{Lichne}\cite{Lichne}\
The operator $\nabla^*\nabla$ is an essentially self-adjoint operator. Moreover, for $u\in L^2(\tM, \tS)$, we have 
$$
\nabla^*\nabla (u) = 0 \Longleftrightarrow \nabla u=0 \text{ (i.e. }u\text{ is parallel)}.
$$
Moreover, the following generalized Lichnerowicz local formula holds 
$$
D^2 = \nabla^*\nabla  + \maR\text{ with }\maR:=\frac{1}{2} \sum_{i, j} e_i \cdot e_j\cdot \Omega^S (e_i, e_j),
$$ 
with $(e_i)_i$ being any local orthonormal frame on $\tM$. 
\end{theorem}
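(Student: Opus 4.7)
My plan splits the statement into three essentially independent pieces: the pointwise Lichnerowicz formula, the essential self-adjointness of $\wnabla^*\wnabla$, and the characterization of its $L^2$-kernel. The only geometric input beyond local elliptic theory is the completeness of $(\tM,\tg)$, which enters both analytic parts through a single cut-off argument of Yau type.

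First I would dispose of the local formula, which is purely pointwise. At a fixed point $p$, pick a synchronous orthonormal frame $(e_i)$ with $\wnabla_{e_i} e_j|_p = 0$, so that $\wnabla^*\wnabla = -\sum_i \wnabla_{e_i}^2$ at $p$. Expanding $\tD^2 = \sum_{i,j} e_i\cdot \wnabla_{e_i}(e_j\cdot \wnabla_{e_j})$ and decomposing the sum into its symmetric and antisymmetric parts via the Clifford identity $e_ie_j + e_je_i = -2\delta_{ij}$, the symmetric half collapses to $\wnabla^*\wnabla$ at $p$ while the antisymmetric half becomes $\frac{1}{2}\sum_{i\neq j} e_i\cdot e_j\cdot [\wnabla_{e_i},\wnabla_{e_j}]$, which at $p$ equals $\frac{1}{2}\sum_{i\neq j} e_i \cdot e_j \cdot \widetilde{\Omega^S}(e_i,e_j)$. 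The diagonal $i=j$ terms vanish trivially, so adding them back gives the stated form of $\maR$; descending by $\Gamma$-invariance recovers the formula $D^2=\nabla^*\nabla+\maR$ on $M$.

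Next I would establish essential self-adjointness. Set $L=\wnabla^*\wnabla$ on $C_c^\infty(\tM,\tS)$; it is symmetric and nonnegative, so by the standard criterion it suffices to show that any $u\in L^2(\tM,\tS)$ with $(L+1)u=0$ in the distributional sense is zero. Ellipticity upgrades $u$ to a smooth section. Completeness of $\tg$ makes $d(\cdot,x_0)$ globally $1$-Lipschitz, so one produces smooth cut-offs $\chi_R$ equal to $1$ on $B(x_0,R)$, supported in $B(x_0,2R)$, with $|d\chi_R|\le C/R$. Pairing $(L+1)u=0$ against $\chi_R^2 u$ and using $\wnabla(\chi_R^2 u)=2\chi_R\, d\chi_R\otimes u+\chi_R^2\wnabla u$ together with the integration-by-parts formula recalled just before the theorem gives
\[
\|\chi_R u\|^2+\|\chi_R\wnabla u\|^2 = -2\int \chi_R\,\langle\wnabla u,d\chi_R\otimes u\rangle.
\]
Cauchy-Schwarz on the right absorbs $\|\chi_R\wnabla u\|^2$ into the left side and leaves a remainder bounded by $C^2 R^{-2}\|u\|^2$, which tends to $0$ as $R\to\infty$ and forces $u=0$.

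Finally, the kernel characterization reuses the same machinery. The implication $\wnabla u=0 \Rightarrow \wnabla^*\wnabla u=0$ is trivial from $\langle \wnabla^*\wnabla u,\sigma\rangle=\langle\wnabla u,\wnabla\sigma\rangle$. For the converse, suppose $u\in L^2(\tM,\tS)$ satisfies $\wnabla^*\wnabla u=0$ distributionally; elliptic regularity gives $u\in C^\infty$. Pairing against $\chi_R^2 u$ yields $\|\chi_R\wnabla u\|^2=-2\int\chi_R\langle\wnabla u,d\chi_R\otimes u\rangle$, and Cauchy-Schwarz with $|d\chi_R|\le C/R$ gives $\|\chi_R\wnabla u\|\le 2CR^{-1}\|u\|$; monotone convergence as $R\to\infty$ delivers $\wnabla u\equiv 0$. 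The only genuinely delicate point in the whole plan is the justification of the integration by parts for $u$ a priori known only in $L^2$; this is exactly where completeness of $\tg$ and the resulting cut-offs with controlled gradient are indispensable, and I do not expect any further obstacle beyond writing the estimates carefully.
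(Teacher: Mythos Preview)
The paper does not give a proof of this theorem; it is stated as classical with references to \cite{Lichne} and \cite{GromovLawson}[Propositions 2.4 \& 2.5]. Your proposal supplies exactly the standard argument behind those references: the synchronous-frame computation for the pointwise Lichnerowicz formula, and the Yau-type cut-off estimates on a complete manifold for both essential self-adjointness and the $L^2$-kernel characterization. The details you wrote out are correct, so there is nothing to compare against and nothing to fix.
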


\medskip

The corresponding relation holds obviously on $\tM$ since it satisfies the same riemannian conditions satisfies by $M$. In the sequel, we shall sometimes also denote by $\maR$ the pull-back operator $ \widetilde{\maR}$ which is defined by the same formula. A standard calculation  shows that when $D$ is the spin-Dirac operator associated with a fixed spin structure associated with the $\SO$ bundle corresponding to the metric $g$ on $M$, then $
\maR = \frac{\kappa}{4} \Id_S,$
where $\kappa$ is the scalar curvature function of $g$.


\medskip

\begin{proposition}\label{Self-adjoint}\ 
Under the previous assumptions, we have:
\begin{enumerate}
\item $\tD:C_c^\infty (\tM, \tS)\to C_c^\infty (\tM, \tS)$ is  essentially self-adjoint whose self-adjoint extension has domain contained in $L^2_1 (\tM, \tS)$;
\item If the pointwise norm $\vert\maR\vert$ of the zero-th order operator $\maR$  is uniformly bounded on $M$, then the domain of the self-adjoint extension of $\tD$ is exactly $L^2_1 (\tM, \tS)$ and $\tD$ gives  a bounded operator from $L^2_1 (\tM, \tS)$ to $L^2 (\tM, \tS)$;
\item $\tD^2:C_c^\infty (\tM, \tS)\to C_c^\infty (\tM, \tS)$ is essentially self-adjoint and its self-adjoint extension is a non-negative operator. 
\item  The  kernels of the operators $\tD$ and $\tD^2$ on $L^2(\tM, \tS)$ do coincide. Moreover, they are composed of smooth sections. 
\end{enumerate}
\end{proposition}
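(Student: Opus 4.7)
My plan is to treat the four assertions as consequences of Chernoff's essential self-adjointness theorem for first-order symmetric elliptic operators on complete Riemannian manifolds, combined with the generalized Lichnerowicz identity recalled in Theorem \ref{Lichne}. Since $(\tM,\tg)$ is complete (being a Galois cover of the complete $(M,g)$) and the principal symbol of $\tD$ is Clifford multiplication, the associated wave flow has propagation speed $1$, so Chernoff's theorem directly yields essential self-adjointness of $\tD|_{C_c^\infty}$ in part (1). The local version of the $L^2_1$ containment follows from interior elliptic regularity ($u,\tD u\in L^2$ forces $u\in L^2_{1,\mathrm{loc}}$); to upgrade to global $L^2_1$ I would pick an exhaustion by cutoffs $\chi_n$ on $\tM$ with $|\widetilde{\nabla}\chi_n|\le 1$, apply Lichnerowicz to $\chi_n u$, use the uniform bound $[\tD,\chi_n]=c(d\chi_n)\in L^\infty$ to control the commutator, and pass to the limit.

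For part (2), under the uniform bound $|\maR|\le C$, applying the Lichnerowicz identity to $u\in C_c^\infty(\tM,\tS)$ yields the two-sided estimate
\[
\|\widetilde{\nabla} u\|^2 - C\|u\|^2 \;\le\; \|\tD u\|^2 \;\le\; \|\widetilde{\nabla} u\|^2 + C\|u\|^2,
\]
so the graph norm of $\tD$ and the Sobolev norm $\|\cdot\|_1$ are equivalent on $C_c^\infty(\tM,\tS)$. Passing to closures identifies $\Dom(\overline{\tD})$ with $L^2_1(\tM,\tS)$ and delivers the claimed boundedness $\tD:L^2_1\to L^2$.

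For part (3), the self-adjointness and non-negativity of the closure of $\tD^2$ are automatic once essential self-adjointness of $\tD$ is established, by the spectral theorem applied to $\overline{\tD}$: the square $(\overline{\tD})^2$ is a non-negative self-adjoint operator. The nontrivial point is to verify that $C_c^\infty(\tM,\tS)$ is a core for $(\overline{\tD})^2$; I would invoke the higher-order version of Chernoff's theorem, using that the Cauchy problem for $\partial_t^2 u + \tD^2 u = 0$ still has finite propagation speed on the complete manifold $\tM$. Part (4) is then short: the inclusion $\Ker\overline{\tD}\subset \Ker(\overline{\tD})^2$ is immediate, while the spectral theorem gives $\Dom((\overline{\tD})^2)\subset \Dom(\overline{\tD})$, so $(\overline{\tD})^2 u=0$ implies $u\in\Dom(\overline{\tD})$ and
\[
\|\overline{\tD} u\|^2 \;=\; \langle (\overline{\tD})^2 u, u\rangle \;=\; 0.
\]
Smoothness of kernel elements comes from the distributional identity $\tD u=0$ combined with interior elliptic regularity for the first-order elliptic operator $\tD$.

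The main difficulty I anticipate is the global $L^2_1$ containment in (1) in the absence of any bound on $\maR$: the cutoff-and-Lichnerowicz argument controls $\|\widetilde{\nabla}(\chi_n u)\|^2$ only modulo the term $\langle \maR\,\chi_n u,\chi_n u\rangle$, which can be genuinely unruly without some uniform lower bound on the curvature endomorphism. One clean workaround is to establish $L^2_{1,\mathrm{loc}}$ first via interior elliptic regularity, then use the resolvent representation of elements in $\Dom(\overline{\tD})$ to bound $\widetilde{\nabla}(\overline{\tD}\pm i)^{-1}$ on $L^2$ via a suitable Kato-type inequality. The remaining points are more routine: once Chernoff gives essential self-adjointness of both $\tD$ and $\tD^2$ on $C_c^\infty$, parts (2)--(4) reduce to quadratic-form manipulations and spectral calculus.
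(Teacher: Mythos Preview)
Your outline is correct and is essentially what the paper's cited references (Gromov--Lawson \S1--2, Lawson--Michelsohn II.5) actually contain; the paper's own proof is just a pointer to those standard results plus the one-line observation that harmonic $L^2$ sections restrict to harmonic $L^2$ sections on relatively compact opens (your elliptic-regularity remark in (4)). Your flagged difficulty about the global $L^2_1$ containment in (1) without a bound on $\maR$ is legitimate---the paper does not address it either and simply defers to \cite{GromovLawson}[Theorem 2.8], where in fact a lower bound on $\maR$ is used; in the applications of this paper $\maR$ is always bounded (indeed $\geq \kappa_0>0$ near infinity), so the point is harmless in context.
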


\begin{proof}\
Most of the items are standard results for generalized Dirac operators on   complete  manifolds, see for instance \cite{GromovLawson}[Theorems 1.17, 1.23] and also \cite{LawsonMichelsohn}[Theorem II.5.7]. The coincidence of the domains with the Sobolev spaces under the condition that $\maR$ is bounded is a consequence of the Lichnerowicz formula, see \cite{GromovLawson}[Theorem 2.8]. For the second part of the last item, notice that if $U$ is a relatively compact open subspace of $\tM$, then any $L^2$ harmonic section restricts to a harmonic $L^2$ section in $L^2(U, \tS)$.
\end{proof}

Given any open relatively compact subspace $\Omega$ of $M$ and its inverse image $\tO$ in $\tM$,  the manifold  $\tO$ has bounded geometry and so is  the  restricted bundle  $\tS\vert_{\tO}$. Therefore, classical elliptic estimates apply and we obtain for instance the existence for any $s\in \Z$ of a constant $C_s>0$ such that for any $u\in L^2_{s} (\tO, \tS)$:
$$
\vert\vert u\vert\vert_{L^2_s(\tO, \tS)}  \leq C_s (\vert\vert u\vert\vert_{L^2_{s-1}(\tO, \tS)} + \vert\vert \tD(u)\vert\vert_{L^2_{s-1}(\tO, \tS)})
$$
Here again the Sobolev spaces are defined as for $\tM$ completing the smooth compactly supported sections on $\tO$.  Using Proposition \ref{Self-adjoint}, we hence deduce that the norms $\vert\vert \bullet\vert\vert_{L^2_s(\tO, \tS)}$ and $\vert\vert \bullet \vert\vert_{L^2_{s-1}(\tO, \tS)} + \vert\vert \tD(\bullet)\vert\vert_{L^2_{s-1}(\tO, \tS)}$ are equivalent on $L^2_{s} (\tO, \tS)$.

\medskip

\section{Von Neumann  trace of the spectral $\ep$-projection}

Let as before $(M, g)$ be a complete riemannian manifold and assume now that  there exists a compact subspace  $K\subset M$ such that the zero-th order curvature operator  $\maR$ satisfies the relation 
$$
\maR\vert_{M\smallsetminus K}  \geq \kappa_0 \Id \text{ for some constant }\kappa_0>0.
$$
We shall refer to this condition as  (uniform) {\em{invertibility near infinity}}. If $D$ is a given generalized Dirac operator, then any twist of $D$ by a hermitian connection on an extra bundle $E\to M$ yields a new generalized Dirac operator $D^E$ acting on the sections of the generalized spin bundle $S\otimes E$. The operator $\maR^E$ corresponding to $D^E$ is then given by
$$
\maR^E = \maR +  \frac{1}{2} \sum_{i, j} (e_i\cdot e_j) \otimes \Omega^E,
$$
where $\Omega^E$ is the curvature tensor of $E$. In particular, if $D$ is   invertible near infinity and $E$ is  $\ep$-almost flat for a sufficiently small $\ep$, then  $D^E$ will also be invertible near infinity.  
In the spin case, the spin-Dirac operator is invertible near infinity if and only if the scalar curvature function $\kappa$ satisfies
$$
\frac{\kappa}{4} \geq \kappa_0\; \text{ off some compact subspace}\; K\text{ for some constant }\kappa_0>0.
$$
 We are now in position to state the main theorem of this section. 

\medskip

\begin{theorem}\label{Main}
Assume  that the generalized Dirac operator $D$ acting on the sections of $S$ over the complete riemannian manifold $M$ is uniformly invertible  near infinity. Then, for any Galois $\Gamma$-cover $\tM\to M$, there  exists $\ep >0$ such that the spectral projection $P_\ep$ of the $\Gamma$-invariant Laplacian $\tD^2$,  associated with the intervalle $[0, \ep]$,  has finite $\Gamma$-dimensional range, i.e. $
\tau (P_\ep) <+\infty$. 
\end{theorem}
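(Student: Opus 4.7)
The strategy is to combine the generalized Lichnerowicz identity with the uniform invertibility of $\maR$ near infinity to force every vector in the range $\cH_\ep := P_\ep\, L^2(\tM, \tS)$ to concentrate in $L^2$-norm on a $\Gamma$-invariant neighborhood of $\pi^{-1}(K)$, and then to turn this concentration into a bound on $\tau(P_\ep)$ via a local Hilbert-Schmidt computation involving the Schwartz kernel of $P_\ep$.

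First I would fix a smooth compactly supported $\phi: M \to [0,1]$ with $\phi \equiv 1$ on a neighborhood of $K$, and set $\widetilde{\phi} := \phi \circ \pi$; the multiplication operator $M_{\widetilde{\phi}}$ is then a $\Gamma$-invariant bounded element of $\maM$. Choosing $C \geq 0$ so that $\maR \geq -C\,\Id$ on all of $M$ (possible since $K$ is compact and since $\maR \geq \kappa_0 \Id$ already holds off $K$), the two bounds combine into the single pointwise inequality $\maR \geq \kappa_0\,\Id - (\kappa_0 + C)\,\widetilde{\phi}^{2}\,\Id$ on $\tM$. For any $u \in \cH_\ep$, which by functional calculus is smooth and lies in the domain of every power of $\tD$, Theorem \ref{Lichne} together with the spectral bound $\langle \tD^2 u, u\rangle \leq \ep \|u\|^2$ yields
$$
\ep\, \|u\|^2 \;\geq\; \|\wnabla u\|^2 + \langle \maR u, u\rangle \;\geq\; \kappa_0\,\|u\|^2 - (\kappa_0 + C)\, \|M_{\widetilde{\phi}} u\|^2.
$$
For $0 < \ep < \kappa_0$ this rearranges to the operator inequality $P_\ep M_{\widetilde{\phi}}^{2} P_\ep \geq \frac{\kappa_0 - \ep}{\kappa_0 + C}\, P_\ep$ in $\maM$, and applying $\tau$ together with its cyclic property gives
$$
\tau(P_\ep) \;\leq\; \frac{\kappa_0 + C}{\kappa_0 - \ep}\;\tau\bigl(M_{\widetilde{\phi}}\, P_\ep\, M_{\widetilde{\phi}}\bigr),
$$
so the theorem is reduced to showing that the right-hand side is finite.

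By the definition of $\tau$, the quantity $\tau(M_{\widetilde{\phi}} P_\ep M_{\widetilde{\phi}})$ equals the classical Hilbert space trace $\Tr(M_{\chi_F \widetilde{\phi}}\, P_\ep\, M_{\chi_F \widetilde{\phi}})$ on $L^2(\tM, \tS)$, and unfolding the Schwartz kernel of $P_\ep$ using $P_\ep = P_\ep^2 = P_\ep^*$ together with the identity $\int_\tM |K_{P_\ep}(x, y)|^2\, d\tm(y) = \tr K_{P_\ep}(x, x)$ rewrites it as $\int_F \widetilde{\phi}(x)^2\, \tr K_{P_\ep}(x, x)\, d\tm(x)$, an integral over the relatively compact subset $F \cap \pi^{-1}(\operatorname{supp}\phi) \subset \tM$. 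I expect the remaining point --- producing and bounding a continuous Schwartz kernel for the spectral projection $P_\ep$ --- to be the main obstacle. The argument is that $\tD^{2N} P_\ep$ has operator norm at most $\ep^N$ for every $N$, so $P_\ep$ maps $L^2(\tM, \tS)$ into $\bigcap_N \Dom(\tD^{2N})$; applying the local elliptic estimates recorded after Proposition \ref{Self-adjoint} on relatively compact open subsets of bounded geometry then gives smoothness of the image and uniform local bounds on $K_{P_\ep}$, closing the proof.
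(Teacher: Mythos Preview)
Your argument is correct and takes a genuinely different route from the paper. Both the paper and you begin with the same Lichnerowicz-based concentration estimate: for $\sigma$ in the range of $P_\ep$ and $\ep$ small,
\[
\|\sigma\|^2_{\pi^{-1}(K)}\;\geq\;\frac{\kappa_0-\ep}{\kappa_0+c}\,\|\sigma\|^2,
\]
which is exactly your operator inequality $P_\ep M_{\widetilde\phi}^2 P_\ep \geq \tfrac{\kappa_0-\ep}{\kappa_0+C}P_\ep$. From there the paths diverge. The paper introduces a $\Gamma$-invariant parametrix $\tQ$ with $\tS=\Id-\tQ\tD$ smoothing and of finite propagation, observes that $p_K\tS$ is $\tau$-Hilbert--Schmidt, and combines the concentration bound with $\|\tD\sigma\|\le\sqrt\ep\|\sigma\|$ to show $p_K\tS$ is bounded below on $\Im P_\ep$; hence $P_\ep$ is $\tau$-compact. (The appendix gives a variant with a specific parametrix built from $(\tD^2+\rho^2)^{-1/2}$ and a von Neumann Rellich lemma.) You instead bypass the parametrix entirely: you pass from the concentration inequality to $\tau(P_\ep)\le C\,\tau(M_{\widetilde\phi}P_\ep M_{\widetilde\phi})$ via the tracial identity $\tau(A^*A)=\tau(AA^*)$, and then compute the right-hand side as an ordinary Hilbert--Schmidt norm supported on the relatively compact set $\overline F\cap\pi^{-1}(\Supp\phi)$, finite by local Sobolev bounds on the smooth kernel of $P_\ep$.

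What each approach buys: your route is more elementary and yields a clean quantitative bound without ever constructing a parametrix. The paper's parametrix method is more structural---it shows $P_\ep$ is $\tau$-compact, not merely $\tau$-trace-class, and the argument transplants readily to settings (foliations, Hilbert modules) where pointwise kernel estimates are less convenient. Note also the paper's Remark after Theorem~\ref{Main}, which warns that finiteness of $\int_F\tr k_\ep(\tm,\tm)\,d\tm$ alone does not suffice; your argument evades this objection precisely because you factor through $P_\ep=P_\ep^*P_\ep$ to get a genuine Hilbert--Schmidt computation rather than appealing to a Mercer-type identity on the noncompact domain $F$. One small point to make explicit in a final write-up: the cyclic step $\tau(P_\ep M_{\widetilde\phi}^2 P_\ep)=\tau(M_{\widetilde\phi}P_\ep M_{\widetilde\phi})$ is used before either side is known to be finite, so it should be justified by $\tau(A^*A)=\tau(AA^*)$ in $[0,+\infty]$ for normal semifinite traces rather than by ``cyclicity'' in the trace-class sense.
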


\medskip

To emphasize the spin-Dirac operator associated with the metric $g$ when there is an associated spin structure, we shall sometimes denote it by $D_g$ and by $\tD_g$ for the lift to $\tM$. The spectral projection  $P_\ep$ is an element of Atiyah's von Neumann algebra $\maM=B(L^2(\tM, \tS))^\Gamma$. When the Galois cover is the trivial one corresponding to $\Gamma=\{e\}$, this theorem is due to Gromov and Lawson who prove moreover that in this case there is a gap  in the spectrum near $0$ \cite{GromovLawson}. They hence deduced that the Green operator associated with $D$ is $L^2$-bounded \cite{GromovLawson}. For $\tD$ this is not true for general infinite $\Gamma$ as can  be checked in simple examples already with $M$ compact.

\begin{remark}
It is easy to see that the kernel projection $P_\ep$  has a smooth  Schwartz kernel $k_\ep (\tm, \tm')\in \Hom (S_{m'}, S_m)$. Theorem \ref{Main} then implies that  the integral $
\int_F \tr (k_\ep (\tm, \tm)) d\tm$, over any fundamental domain $F$ of the Galois cover $\tM\to M$, is finite and coincides with  $\tau (P_\ep)$. It is worth pointing out that the finiteness of this integral  can be proved directly but, even when $\Gamma$ is trivial,  it does not suffice to deduce Theorem \ref{Main}. 
\end{remark}

We shall give the proof of Theorem \ref{Main} in Section \ref{Proof}, and we also show in Appendix \ref{Rellich} that a type II Rellich lemma also allows to deduce another proof of this theorem.  
We can deduce the following

\medskip

\begin{theorem}
Under the assumptions of Theorem \ref{Main} and when $M$ is even dimensional, the $\Gamma$-invariant generalized Dirac operator $\tD^+$  acting from $L^2 (\tM, \tS^+)$ to $L^2 (\tM, \tS^-)$ has a well defined $L^2$ index
$$
\Ind_{(2)} (\tD^+) := \dim_\Gamma (\Ker (\tD^+)) -  \dim_\Gamma (\Ker (\tD^-))  \quad \in \R.
$$
In the spin case with the metric $g$ having PSC near infinity, this $L^2$-index is denoted $\Spin_{(2)} (\tM, g)$ and is called the $L^2$-genus of the riemannian Galois cover.
\end{theorem}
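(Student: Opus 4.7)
The plan is to deduce this theorem as an immediate consequence of Theorem \ref{Main} together with item (4) of Proposition \ref{Self-adjoint}, since all the essential analytic work has already been done in proving Theorem \ref{Main}. The only thing to check is that each $\dim_\Gamma(\Ker(\tD^\pm))$ is a finite nonnegative real number, so that their difference is automatically a well-defined element of $\R$.

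First I would observe that, by item (4) of Proposition \ref{Self-adjoint}, one has $\Ker(\tD) = \Ker(\tD^2)$ inside $L^2(\tM, \tS)$. Hence $\Ker(\tD)$ is the range of $P_0$, the spectral projection of $\tD^2$ associated with $\{0\}$, and $P_0$ is a subprojection of $P_\ep$ for every $\ep > 0$. By Theorem \ref{Main} I may fix $\ep > 0$ with $\tau(P_\ep) < \infty$, and monotonicity of the faithful normal semi-finite trace $\tau$ on $\maM$ then yields
$$
\tau(P_0) \leq \tau(P_\ep) < \infty.
$$

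Next, the $\Z_2$-grading operator on $\tS$ is $\Gamma$-invariant, and since $\tD$ is odd for this grading, $\tD^2$ is even, so $P_0$ commutes with the grading. Consequently $\Ker(\tD)$ splits as a $\Gamma$-invariant orthogonal direct sum $\Ker(\tD^+) \oplus \Ker(\tD^-)$, and the corresponding orthogonal projections $P_0^\pm$ lie in $\maM$ with $P_0 = P_0^+ + P_0^-$. Additivity of $\tau$ on the sum of commuting projections yields $\tau(P_0^\pm) \leq \tau(P_0) < \infty$, so by the very definition of the $\Gamma$-dimension, $\dim_\Gamma(\Ker(\tD^\pm)) = \tau(P_0^\pm) \in [0, +\infty)$. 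Therefore $\Ind_{(2)}(\tD^+)$ is a well-defined real number. There is no genuine obstacle beyond Theorem \ref{Main}: the present theorem is a bookkeeping corollary packaging the finiteness of $\tau(P_\ep)$ together with the compatibility of the $\Z_2$-grading of $\tS$ with the $\Gamma$-action.
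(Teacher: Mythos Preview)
Your proof is correct and follows essentially the same route as the paper's own argument: both use Theorem \ref{Main} to obtain $\tau(P_0)\leq\tau(P_\ep)<\infty$, invoke item (4) of Proposition \ref{Self-adjoint} for $\Ker(\tD)=\Ker(\tD^2)$, and then split according to the $\Z_2$-grading to conclude that each $\dim_\Gamma(\Ker(\tD^\pm))$ is finite. The paper phrases the splitting as $\Ker(\tD^2)=\Ker(\tD^-\tD^+)\oplus\Ker(\tD^+\tD^-)$ before identifying these with $\Ker(\tD^\pm)$, whereas you argue directly that $P_0$ commutes with the grading operator; these are the same observation packaged slightly differently.
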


\medskip

\begin{proof}
It is clear from the very definition of the von Neumann trace $\tau$ that it can be restricted to operators on $\tS^\pm$ and that for any diagonal non-negative operator $T=\left (\begin{array}{cc}T^+ & 0\\ 0 & T^-\end{array}\right)$, we have $\tau (T) = \tau (T^+) + \tau (T^-)$. We apply Proposition \ref{Self-adjoint} to deduce that 
$$
\Ker(\tD)=\Ker (\tD^2) =  \Ker (\tD^-\tD^+)\oplus \Ker (\tD^+ \tD^-).
$$ 
Therefore, 
$$
+\infty > \dim_\Gamma (\Ker(\tD)) = \dim_\Gamma (\Ker (\tD^2)) = \dim_\Gamma (\Ker(\tD^-\tD^+)) + \dim_\Gamma (\Ker(\tD^+\tD^-)),
$$
and henceforth $ \dim_\Gamma (\Ker(\tD^-\tD^+))$ and $\dim_\Gamma (\Ker(\tD^+\tD^-))$ are both finite non-negative real numbers which similarly coincide respectively with $ \dim_\Gamma (\Ker(\tD^+))$ and $\dim_\Gamma (\Ker(\tD^-))$. 
\end{proof}

When $M$ is a closed  manifold, the invertibility near infinity condition  disappears and the  $L^2$-index $\Ind_{(2)} (\tD)$ is known to belong to the range of the additive $K$-theory map associated with the regular trace  on the reduced $C^*$-algebra $C^*_r\Gamma$. Moreover, by the covering Atiyah theorem \cite{AtiyahCovering}, $\Ind_{(2)} (\tD)$ is then an integer which coincides with  the index $\Ind (D)$ of the generalized Dirac operator on the base manifold $M$, a topological invariant. 

\medskip

\section{Proof of Theorem \ref{Main}}\label{Proof}

We denote for any Borel subspace $A$ of $\tM$ by $\vert\vert\bullet\vert\vert_A$ the $L^2$ norm of the restriction to $A$. For a given $\phi\in L^2(\tM, \tS)$ we denote by $\beta_\gamma(\phi)$ the non-negative real number $\vert\vert \phi\vert\vert^2_{\gamma F}$, so that 
$$
\vert\vert \phi\vert\vert^2 = \sum_{\gamma\in \Gamma} \beta_\gamma(\phi).
$$
We make the assumptions of Theorem \ref{Main} and our goal here is to give a direct proof inspired from the Gromov-Lawson proof for trivial $\Gamma$.  Recall that $F$ denotes an open  fundamental domain for the Galois covering $\pi:\tM\to M$ of complete  manifolds. We shall first concentrate on the kernel projection $P=P_0$ and give a direct proof of its finite $\tau$-trace which  provides, after careful inspection,  a quantitative estimate depending on the covering  and of the constant $\kappa_0$ as well as the uniform norm of $\maR$ on the compact space $K$. The proof for $P_\ep$ with $\ep>0$ will be given right after using a parametrix and hence a different method which can also be applied to deduce the case $\ep=0$ without providing precise estimates. 
Since $K$ is compact, we know that there exists a constant $c>0$ such  $\maR \geq -c \Id$ in restriction to  $\pi^{-1} (K)$.

\begin{lemma}\label{PSCestimate}
For  any $\phi\in \Ker (\tD)$ and any $\gamma\in \Gamma$, the following estimate holds:
$$
\vert\vert \phi\vert\vert_{\gamma F\cap \pi^{-1} (K)} \geq \frac{\kappa_0}{\kappa_0+c} \times \sqrt{\beta_\gamma(\phi)}.
$$
\end{lemma}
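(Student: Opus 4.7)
The plan is a $\Gamma$-equivariant adaptation of the Gromov-Lawson Lichnerowicz–Bochner argument for complete manifolds whose Dirac operator is invertible at infinity.

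First, I would invoke Proposition \ref{Self-adjoint}(4): any $\phi \in \Ker(\tD)$ lies in $\Ker(\tD^2)$ and is smooth. Next, I would apply the generalized Lichnerowicz formula from Theorem \ref{Lichne} on the cover, $\tD^2 = \widetilde{\nabla}^{*}\widetilde{\nabla} + \widetilde{\maR}$, which gives pointwise $\widetilde{\nabla}^{*}\widetilde{\nabla}\phi = -\widetilde{\maR}\phi$. Because $\tM$ is complete and $\phi \in L^2(\tM,\tS)$, I would use a Gaffney-type cutoff argument (a sequence of compactly supported smooth $\eta_n$ with $\eta_n \uparrow 1$ and $|d\eta_n| \to 0$ exists by completeness) to integrate by parts rigorously, producing the global Bochner identity
$$
0 \;=\; \int_{\tM}\langle \tD^{2}\phi,\phi\rangle\,d\tm \;=\; \|\widetilde{\nabla}\phi\|^{2}_{L^{2}(\tM)} + \int_{\tM}\langle \widetilde{\maR}\phi,\phi\rangle\,d\tm.
$$

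Then I would split the curvature integral using the two-sided bound: $\widetilde{\maR}\geq \kappa_0 \Id$ on $\tM\smallsetminus \pi^{-1}(K)$ (the invertibility at infinity) and $\widetilde{\maR}\geq -c\,\Id$ on $\pi^{-1}(K)$ (by compactness of $K$ and uniform boundedness of $\maR$ on a compact set, lifted $\Gamma$-invariantly). Dropping the non-negative term $\|\widetilde{\nabla}\phi\|^{2}$ yields
$$
\kappa_{0}\,\|\phi\|^{2}_{\tM\smallsetminus \pi^{-1}(K)} \;\leq\; c\,\|\phi\|^{2}_{\pi^{-1}(K)},
$$
and adding $\kappa_{0}\|\phi\|^{2}_{\pi^{-1}(K)}$ to both sides gives the global estimate $\|\phi\|^{2}_{\pi^{-1}(K)} \geq \frac{\kappa_0}{\kappa_0+c}\|\phi\|^{2}_{\tM}$.

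To pass to the per-fundamental-domain statement with the claimed constant, I would exploit the $\Gamma$-invariance of $\pi^{-1}(K)$, of the metric, and of the spinor structure. Using that $\gamma \cdot \phi$ is again harmonic whenever $\phi$ is, one reduces WLOG to $\gamma=e$. Then I would derive the pointwise Bochner inequality $\tfrac{1}{2}\Delta |\phi|^{2} \leq -\kappa_{0}|\phi|^{2}$ on $\tM\smallsetminus \pi^{-1}(K)$ from $\tfrac{1}{2}\Delta|\phi|^2 = \langle \widetilde{\nabla}^{*}\widetilde{\nabla}\phi,\phi\rangle - |\widetilde{\nabla}\phi|^{2}$, and apply an Agmon/maximum-principle type argument on the open set $F\smallsetminus \pi^{-1}(K)$ to control $\|\phi\|_{F\smallsetminus \pi^{-1}(K)}$ by the boundary values on $F \cap \pi^{-1}(K)$, together with the uniform bounds on $\widetilde{\maR}$ near $\pi^{-1}(K)$.

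The main obstacle is this last step: the global Lichnerowicz–Bochner–Gaffney identity is routine and yields the aggregate estimate over $\tM$, but localising to a single fundamental domain $\gamma F$ with the stated constant $\kappa_{0}/(\kappa_{0}+c)$ requires handling boundary terms across $\partial(\gamma F)$ that are \emph{not} individually controlled by an IBP on $\gamma F$ alone. I expect the key insight to be the exponential decay of $|\phi|$ into the positivity region (from the subharmonic Bochner inequality), combined with the $\Gamma$-invariance of the entire configuration, which makes the decay estimate uniform across translates.
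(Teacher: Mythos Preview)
Your route is more roundabout than the paper's and stalls precisely at the step you yourself flag as the obstacle.

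The paper does \emph{not} prove a global Bochner identity over $\tM$ and then attempt to localize. It integrates the Lichnerowicz identity directly over the single translate $\gamma F$: since $\tD\phi=0$ pointwise,
\[
0 \;=\; \int_{\gamma F}\vert\tD\phi\vert^{2}\,d\tm \;=\; \int_{\gamma F}\vert\nabla\phi\vert^{2}\,d\tm \;+\; \int_{\gamma F}\langle\maR\phi,\phi\rangle\,d\tm \;\geq\; \int_{\gamma F}\langle\maR\phi,\phi\rangle\,d\tm,
\]
and then splits the last integral over $\gamma F\cap\pi^{-1}(K)$ and its complement exactly as you do globally. With $\beta_\gamma(\phi)=1$ this yields $\Vert\phi\Vert_{\gamma F\cap\pi^{-1}(K)}^{2}\geq \kappa_0/(\kappa_0+c)$ directly; the non-squared constant in the lemma is just the weaker consequence.

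So your global estimate $\Vert\phi\Vert^{2}_{\pi^{-1}(K)}\geq \frac{\kappa_0}{\kappa_0+c}\Vert\phi\Vert^{2}_{\tM}$ is correct but is the wrong entry point for \emph{this} lemma: once you sum over $\Gamma$ you lose the per-$\gamma$ information, and an Agmon or maximum-principle argument on $F\smallsetminus\pi^{-1}(K)$, even if made precise, would produce constants depending on the geometry of $F$ and $K$ rather than the clean $\kappa_0/(\kappa_0+c)$. Your concern about boundary contributions on $\partial(\gamma F)$ is a fair comment on the paper's exposition, which writes the displayed equality without addressing them; but that is separate from the detour you take through the global identity. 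It is worth noting, however, that your global inequality \emph{would} already suffice for the intended application in the proof of Theorem~\ref{Main}: the companion $C^{1}$ bound of Proposition~\ref{C1estimate} \emph{is} established per fundamental domain, and summing it over $\Gamma$ against your global Bochner inequality reaches the same contradiction.
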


\begin{proof}
Let  $\phi\in \Ker(\tD)$ be a given non trivial section and let us fix some $\gamma\in \Gamma$ such that $\beta_\gamma(\phi) >0$.  Then  we can assume that $\beta_\gamma(\phi)=1$ and  apply the generalized Lichnerowicz formula to $\phi$ to deduce
$$
0 = \int_{\gamma F} \vert (\tD \phi)(\tm)\vert_{S_m}^2  d\tm  = \int_{\gamma F} \vert (\nabla \phi)(\tm)\vert_{S_m\otimes T^*_mM}^2  d\tm  + \int_{\gamma F}  \langle \maR u, u\rangle_{S_m} d\tm \geq \int_{\gamma F}  \langle \maR u, u\rangle_{S_m} d\tm.
$$
But 
$$
\int_{\gamma F} \langle \maR u, u\rangle_{S_m} d\tm = \int_{\gamma F\cap\pi^{-1} (K)}  \langle \maR u, u\rangle_{S_m} d\tm +   \int_{\gamma F\smallsetminus \pi^{-1} (K)} \langle \maR u, u\rangle_{S_m} d\tm.
$$
On the other hand for  $m\in M\smallsetminus K$ we have $\maR_m \geq \kappa_0 \Id_{S_m}$, therefore
$$
\int_{\gamma F\smallsetminus \pi^{-1} (K)}  \langle \maR u, u\rangle_{S_m} d\tm \geq  \kappa_0 (1 - \vert\vert u\vert\vert^2_{\gamma F\cap \pi^{-1} (K)}),
$$
 and we also have
$$
 \int_{\gamma F\cap\pi^{-1} (K)}  \langle \maR u, u\rangle_{S_m} d\tm \geq -c \vert\vert u \vert\vert^2_{\gamma F\cap\pi^{-1} (K)}.
$$
Thus we deduce
$$
0 \geq \kappa_0 (1 - \vert\vert u\vert\vert^2_{\gamma F\cap \pi^{-1} (K)}) - c \vert\vert u\vert\vert^2_{\gamma F\cap \pi^{-1} (K)} = \kappa_0 - (\kappa_0+c)  \vert\vert u\vert\vert^2_{\gamma F\cap \pi^{-1} (K)},
$$
and hence the conclusion.
\end{proof}

We denote by $\vert\vert\bullet\vert\vert_{C^1, A}$, for an open subspace $A$ of $\tM$,  the uniform $C^1$-norm over $A$. 

\begin{proposition}\label{C1estimate}
Under the previous assumptions, and for any open relatively compact subspace $L$ of $M$, there exists  a constant $C= C(L)>0$ such that for any $\phi\in \Ker (\tD)$ and any $\gamma\in \Gamma$, we have
$$
\vert \vert\phi\vert\vert_{C^1, \gamma F\cap\pi^{-1}(L)} \leq C \times \sqrt{\beta_\gamma(\phi)}.
$$
\end{proposition}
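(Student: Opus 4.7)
The plan is to combine $\Gamma$-equivariance of $\tD$ with iterated interior elliptic regularity on a bounded-geometry open subset of $\tM$, followed by the Sobolev embedding into $C^1$.

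First I would use equivariance to reduce to $\gamma=e$. Since the unitary $\Gamma$-action on $L^2(\tM,\tS)$ commutes with $\tD$ and $\pi^{-1}(L)$ is $\Gamma$-invariant, the translation $\phi\mapsto \gamma^{-1}\phi$ preserves $\Ker(\tD)$, sends $\gamma F\cap\pi^{-1}(L)$ isometrically onto $F\cap\pi^{-1}(L)$, and turns $\sqrt{\beta_\gamma(\phi)}$ into $\|\gamma^{-1}\phi\|_{L^2(F)}$. Hence it suffices to produce a constant $C=C(L)>0$, independent of $\phi$, such that
$$
\|\phi\|_{C^1,F\cap\pi^{-1}(L)} \;\leq\; C\,\|\phi\|_{L^2(F)}, \qquad \phi\in\Ker(\tD).
$$

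Next, fix a relatively compact open $\Omega\subset M$ with $\overline L\subset\Omega$ and set $\tO:=\pi^{-1}(\Omega)$. By construction $\tO$ has bounded geometry, so the iterated elliptic estimate recalled just before the proposition is available. By Proposition \ref{Self-adjoint}(4), $\phi$ is smooth, and $\tD\phi=0$ allows bootstrapping that estimate on any nested pair $V\Subset V'\Subset\tO$ of relatively compact opens: for every integer $s\geq 0$,
$$
\|\phi\|_{L^2_s(V)} \;\leq\; C_{s,V,V'}\,\|\phi\|_{L^2(V')}.
$$
Choosing $s>\dim(M)/2+1$ and invoking the bounded-geometry Sobolev embedding $L^2_s\hookrightarrow C^1$ converts this into a uniform $C^1$-bound $\|\phi\|_{C^1,V} \leq C'_{V,V'}\|\phi\|_{L^2(V')}$.

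The remaining, and most delicate, step is to arrange $F\cap\pi^{-1}(L)\subset V$ and an enlargement $V'\Subset\tO$ so that $\|\phi\|_{L^2(V')}$ is dominated by $\|\phi\|_{L^2(F)}$. Here lies the main obstacle: the enlargement $V'$ may protrude across $\partial F$ and capture $L^2$-mass from a few neighboring translates $\gamma' F$. Compactness of $\overline L$ forces only finitely many such $\gamma'$ to meet $V'$, and the standard remedy is to insert a cut-off $\chi\in C_c^\infty(\tM)$ equal to $1$ on $V$: since $\tD(\chi\phi)=[\tD,\chi]\phi$ has support inside a thin annulus where the zero-order commutator $[\tD,\chi]$ is bounded, the bootstrap can be rerun with $\chi\phi$ in place of $\phi$, absorbing the finitely many $\beta_{\gamma'}(\phi)$ contributions into a uniform constant. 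The $\Gamma$-equivariance reduction of the first step then ensures that the resulting $C(L)$ is independent of $\gamma$, yielding the claimed bound.
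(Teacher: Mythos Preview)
Your overall strategy—reduce by $\Gamma$-equivariance, bootstrap via interior elliptic regularity, finish with Sobolev embedding into $C^1$—is the same as the paper's. The gap is in your final paragraph, where you correctly identify the obstacle but then propose a remedy that does not close it.

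If the enlarged open set $V'$ protrudes into neighbouring translates $\gamma'F$, your cutoff argument produces $\tD(\chi\phi)=[\tD,\chi]\phi$, whose $L^2$-norm is controlled by $\|\phi\|$ on the annulus $\operatorname{supp}(\nabla\chi)$; but that annulus lies precisely in those neighbouring translates. The bootstrap therefore yields at best
\[
\|\phi\|_{C^1,\,F\cap\pi^{-1}(L)}\;\leq\;C\Bigl(\sum_{\gamma'\in S}\beta_{\gamma'}(\phi)\Bigr)^{1/2}
\]
for some finite set $S\ni e$ depending only on $L$. The numbers $\beta_{\gamma'}(\phi)$ for $\gamma'\neq e$ are \emph{not} dominated by $\beta_e(\phi)$—a harmonic section can concentrate its $L^2$-mass in any translate—so they cannot be ``absorbed into a uniform constant'' times $\sqrt{\beta_e(\phi)}$. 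The resulting inequality is strictly weaker than the proposition as stated.

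The paper sidesteps this by never leaving $\gamma F$. For each compact $K'\subset \gamma F\cap\pi^{-1}(L)$ one chooses the whole chain of nested opens $K'\subset\Omega_1\Subset\cdots\Subset\Omega_N$ \emph{inside the open set $\gamma F$} (possible because $K'$ is compact and $\gamma F$ open); the cutoff $\chi$ is then supported in $\gamma F$, so every $L^2$-norm appearing in the bootstrap is already bounded by $\|\phi\|_{L^2(\gamma F)}=\sqrt{\beta_\gamma(\phi)}$. One then takes the supremum over such $K'$, and $\Gamma$-invariance of the metric makes the constant independent of $\gamma$. In short: do not enlarge beyond $F$; shrink down to compacta inside it instead.
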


\begin{proof}
Choose a compact subspace $K'$ which is contained in $\pi^{-1}L\cap \gamma F$. Let $\Omega$ be an open neighborhood of $K'$ in the open subspace $\gamma F$ which is relatively compact. Fix a smooth compactly supported function $\chi$ on $\Omega$ which equals $1$ on $K'$. For any $\phi\in \Ker (\tD)$, the uniform elliptic estimate recalled above insures that  for any $s\in \N$, there exists of a constant $C'_s$ such that 
$$
\vert\vert \phi\vert\vert_{L^2_s (K', \tS)} \leq C'_s \left( \vert\vert \chi\phi\vert\vert_{L^2_{s-1} (\Omega, \tS)} + \vert\vert \tD(\chi\phi)\vert\vert_{L^2_{s-1} (\Omega, \tS)}\right).
$$
Since $\phi\in \Ker (\tD)$, $\tD(\chi\phi) = [\tD, M_\chi] (\phi)$ where the operator $[\tD, M_\chi]$ is  a zero-th order differential operator with compactly supported coefficients inside $\Omega$, and hence there exists  a constant $C''_s$ such that
$$
\vert\vert \tD(\chi\phi)\vert\vert_{L^2_{s-1} (\Omega, \tS)} \leq C''_s \vert\vert \phi\vert\vert_{L^2_{s-1} (\Omega, \tS)}.
$$
Therefore,  there exists of a constant $C_s>0$ such that $
\vert\vert \phi\vert\vert_{L^2_s (K', \tS)} \leq C_s \vert\vert u\vert\vert_{L^2_{s-1} (\Omega, \tS)}.$
Notice that the constants $C_s, C'_s$ and $C''_s$ don't depend neither on the group element $\gamma$ nor on $\phi$.

Let now  $(\Omega_n)_{1\leq n\leq N}$ be a finite collection of open relatively compacts subspaces of $\gamma F$ such that
$$
K' \subset \Omega_1 \text{ and } \overline{\Omega_j} \subset \Omega_{j+1} \text{ for }1\leq j\leq N-1.
$$
Then we may apply the previous argument inductively to deduce the existence of a constant $C_s (N)$ such that for any $\gamma\in \Gamma$ and any $\phi\in \Ker (\tD)$:
$$
\vert\vert \phi\vert\vert_{L^2_s (K', \tS)} \leq  C_s(N) \vert\vert u\vert\vert_{L^2_{s-N} (\gamma F, \tS)}.
$$
Taking $s=N$ we get  a constant $C(N)>0$ such that for any $\gamma\in \Gamma$ and any $\phi\in \Ker (\tD)$:
$$
\vert\vert \phi\vert\vert_{L^2_{N} (K', \tS)} \leq C (N) \sqrt{\beta_\gamma(\phi)}.
$$
Taking the supremum over such subspaces $K'$ of $\pi^{-1} (L)\cap \gamma F$, we deduce by the Beppo-Levi argument and since our measure is regular (being a Lebesgue-class measure),
$$
\vert\vert \phi\vert\vert_{L^2_{N} (\pi^{-1}(L)\cap \gamma F, \tS)} \leq C (N) \sqrt{\beta_\gamma(\phi)}.
$$
Since $L$ is relatively compact, the Sobolev estimate implies that  for $N$ large enough  ($N> \dim (M)+1$), there exists a constant $C'$ such that
$$
\vert \vert\phi \vert\vert_{C^1, \gamma F\cap\pi^{-1}(L)} \leq C' \vert\vert \phi\vert\vert_{L^2_{N} (\pi^{-1}(L)\cap \gamma F, \tS)} 
$$
The proof is now complete.
\end{proof}

\medskip

\begin{proposition}\label{Bounded}
For any finite subset $\maF:=\{\tm_j, 1\leq j\leq d\}$ of  the fundamental domain $F$, one defines a  bounded $\Gamma$-equivariant operator 
$$
T_\maF:\Ker (\tD) \longrightarrow \bigoplus_{j=1}^d \ell^2 (\Gamma \tm_j)\otimes S_{m_j} \text{ by setting }T_\maF \phi := (T(g \tm_j))_{\gamma\in \Gamma, 1\leq j\leq d}.
$$ 
\end{proposition}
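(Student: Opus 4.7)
The plan is to use Proposition \ref{C1estimate} as a black box: it already provides pointwise control of any $\phi \in \Ker(\tD)$ in terms of $\sqrt{\beta_\gamma(\phi)}$, and this is exactly the ingredient needed to square-sum values at a $\Gamma$-orbit of base points. Since elements of $\Ker(\tD)$ are smooth by Proposition \ref{Self-adjoint}(4), pointwise evaluation $\phi(\gamma \tm_j)$ makes sense unambiguously, so the map $T_\maF$ is at least well defined as a linear map into the algebraic direct sum.

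To show boundedness, I would first choose a relatively compact open neighborhood $L$ in $M$ of the finite set $\{m_j = \pi(\tm_j)\}_{1\leq j\leq d}$. Then $\gamma \tm_j \in \gamma F \cap \pi^{-1}(L)$ for every $\gamma \in \Gamma$ and every $j$, and Proposition \ref{C1estimate} supplies a constant $C = C(L) > 0$, independent of $\gamma$ and $\phi$, such that
$$
|\phi(\gamma \tm_j)|_{\tS_{\gamma \tm_j}} \;\leq\; \vert\vert\phi\vert\vert_{C^1,\, \gamma F \cap \pi^{-1}(L)} \;\leq\; C \,\sqrt{\beta_\gamma(\phi)}.
$$
Summing over $j = 1, \dots, d$ and over $\gamma \in \Gamma$, I obtain
$$
\vert\vert T_\maF \phi\vert\vert^2 \;=\; \sum_{\gamma \in \Gamma} \sum_{j=1}^{d} |\phi(\gamma \tm_j)|^2 \;\leq\; d\, C^2 \sum_{\gamma\in\Gamma} \beta_\gamma(\phi) \;=\; d\, C^2\, \vert\vert \phi\vert\vert^2,
$$
which both shows that $T_\maF \phi$ genuinely lies in the Hilbert space $\bigoplus_j \ell^2(\Gamma \tm_j) \otimes S_{m_j}$ and gives the required norm estimate.

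Finally, $\Gamma$-equivariance is a direct unwinding of definitions: the $\Gamma$-action on $L^2(\tM, \tS)$ is by the pull-back of sections along $\gamma^{-1}$, while the action on $\ell^2(\Gamma \tm_j) \otimes S_{m_j}$ is the left regular representation of $\Gamma$ on the orbit $\Gamma \tm_j$. For $g \in \Gamma$, evaluating $T_\maF(g \cdot \phi)$ at $(\gamma, j)$ gives $\phi(g^{-1}\gamma \tm_j)$, which is exactly the $(\gamma, j)$-component of $g \cdot T_\maF(\phi)$, so $T_\maF \circ g = g \circ T_\maF$ for all $g \in \Gamma$.

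The only genuine content of the proposition is Proposition \ref{C1estimate}, which was already established; there is no real obstacle here, only bookkeeping. The mildly delicate point is to ensure that the constant in the $C^0$-bound does not depend on $\gamma$, but this is precisely the uniform feature built into Proposition \ref{C1estimate} via the bounded geometry of the $\Gamma$-covering.
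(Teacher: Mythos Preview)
Your proof is correct and follows the same strategy as the paper's: both invoke Proposition \ref{C1estimate} to control $|\phi(\gamma\tm_j)|$ by $\sqrt{\beta_\gamma(\phi)}$ uniformly in $\gamma$, then square-sum over $\gamma$ and $j$. Your route is in fact slightly more direct: the paper uses the $C^1$ part of the estimate to write $|\phi(\gamma\tm_j)|^2 \leq 2(|\phi(\tm)|^2 + C^2\alpha^2\beta_\gamma(\phi))$ for $\tm$ in a small disjoint ball $B(\gamma\tm_j,\alpha)$, then integrates and sums to get the bound $\frac{2 + 2dC^2\alpha^2 M}{m}\,\|\phi\|^2$, whereas you simply read off the $C^0$ bound $|\phi(\gamma\tm_j)| \leq C\sqrt{\beta_\gamma(\phi)}$ already contained in the $C^1$ norm and obtain $dC^2\|\phi\|^2$ immediately. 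Both arguments are valid; yours avoids the ball-integration detour at no cost.
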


\begin{proof}
Choose any $\alpha > 0$  such that  the balls $B(\tm_j, \alpha)$ centered at $\tm_i$ with radius $\alpha$ are disjoint from each other and contained in the open fundamental domain $F$. Then since the metric is $\Gamma$-invariant, for any $\gamma\in \Gamma$, the same property is satisfied by the collection $B(\gamma\tm_j, \alpha)$ inside $\gamma F$. Applying Proposition \ref{C1estimate} with any open relatively compact subspace $L$ of $M$ which contains the projection of the union of the balls $B(\tm_j, \alpha)$ for $1\leq j\leq d$, we deduce that for any $\tm\in B(\gamma\tm_j, \alpha)$ and for any $\phi\in \Ker (\tD)$, we have
$$
\vert \phi (\gamma\tm_j)\vert^2 \leq 2 \left(\vert \phi (\tm)\vert^2 + C^2 \alpha^2 \beta_\gamma(\phi)\right),
$$
where $C= C(L)$ is given by Proposition \ref{C1estimate}. 
Integrating this inequality over $B(\gamma\tm_j, \alpha)$ and suming  over $j\in \{1, \cdots, d\}$ and then over $\gamma\in \Gamma$, we deduce
$$
 \sum_{j=1}^d \frac{vol (B(\tm_j, \alpha))}{2} \sum_{\gamma\in \Gamma} \vert \phi (\gamma\tm_j)\vert^2 \leq \sum_{j, \gamma} \int_{B(\gamma\tm_j, \alpha)} \vert \phi (\tm)\vert^2 d\tm +  C^2 \alpha^2\sum_{j=1}^d  vol (B(\tm_j, \alpha)) \sum_\gamma \beta_\gamma (\phi).
$$
But
$$
\sum_\gamma \beta_\gamma (\phi) = \vert\vert \phi\vert\vert^2_{L^2 (\tM, \tS)}  \text{ and } \sum_{j, \gamma} \int_{B(\gamma\tm_j, \alpha)} \vert \phi (\tm)\vert^2 d\tm \leq \vert\vert \phi\vert\vert^2_{L^2 (\tM, \tS)}.
$$
Hence if $m=\inf_{1\leq j\leq d} vol (B(\tm_j, \alpha))$, and $M=\sup_{1\leq j\leq d} vol (B(\tm_j, \alpha))$, then we get
$$
\sum_{j, \gamma} \vert \phi (\gamma\tm_j)\vert^2 \leq  \frac{2 + 2 d C^2 \alpha^2 M}{m} \times \vert\vert \phi\vert\vert^2_{L^2 (\tM, \tS)}.
$$
Therefore,  the sum $\sum_{j, \gamma} \vert \phi (\gamma\tm_j)\vert^2$ converges so that 
$$
(\phi(\gamma\tm_j))_{j, \gamma}\in \bigoplus_{i=1}^d \ell^2(\Gamma \tm_j)\otimes S_{m_j}\simeq \ell^2\Gamma\otimes \C^{d\times\dim (S)}.
$$ 
Moreover, since the constant $ \frac{2 + 2 d C^2 \alpha^2 M}{m} $ does not depend on $\phi\in \Ker (\tD)$, the map 
$$
T_\maF: \Ker (\tD) \longrightarrow \bigoplus_{i=1}^d \ell^2(\Gamma \tm_j)\otimes S_{m_j} \text{ given by } T_\maF(\phi):= (\phi (\gamma\tm_j))_{j, \gamma} \text{ is a bounded operator}.
$$
It is finally obvious from its very definition that the operator $T_\maF$ is $\Gamma$-equivariant between the two Hilbert $\Gamma$-spaces $\Ker (\tD)$ and $\bigoplus_{i=1}^d \ell^2(\Gamma \tm_j)\otimes S_{m_j}$ where this latter is induced from the  regular  representation through the isomorphism $\ell^2 (\Gamma)\simeq \ell^2(\Gamma \tm_j)$.
\end{proof}

We are now in position to prove our theorem.

\begin{proof} (of Theorem \ref{Main})

Take any positive number $\epsilon\in ]0, 1]$ and fix a finite subset $\maF (\epsilon)=\{\tm_j, 1\leq j\leq d\}$ of $F\cap \pi^{-1} (K)$ such that the collection of open balls $(B(\tm_j, \epsilon))_{1\leq j\leq d}$ provides an open cover of $F\cap \pi^{-1} (K)$. Applying Proposition \ref{Bounded}, we know that the operator $T_{\maF (\epsilon)}$ from $\Ker (\tD)$ to the Hilbert $\Gamma$-space $\bigoplus_{j=1}^d \ell^2(\Gamma\tm_j)\otimes S_{m_j}$ given by evaluation at the points $\gamma\tm_j$ for $\gamma\in \Gamma$ and $1\leq j\leq d$, is a bounded $\Gamma$-equivariant operator. 
Assume that $\dim_\Gamma (\Ker (\tD)) = +\infty$, then we claim that the operator $T_{\maF(\epsilon)}$ cannot be injective. Indeed, if it were injective, then denoting by $H$ the Hilbert $\Gamma$-subspace of $\bigoplus_{j=1}^d \ell^2(\Gamma\tm_j)\otimes S_{m_j}$ which is the closure of the range of $T_{\maF(\epsilon)}$, we would get an injective $\Gamma$-equivariant operator from $\Ker (\tD)$ to $H$ which has dense range. But  this shows that $\dim_\Gamma (H)=+\infty$, by a standard argument using the normality of the trace $\tau$, see Lemma 4 in \cite{BenameurFack} and its proof. 
Hence, we can find $\phi\in \Ker (\tD)$ such that
$$
\vert\vert \phi\vert\vert_{L^2(\tM, \tS)} =1 \text{ and } \phi (\gamma\tm_j) = 0\text{ for any } \gamma\in \Gamma\text{ and }j\in \{1, \cdots, d\}.
$$
There then exists   $\gamma\in \Gamma$ such that $\beta_\gamma(\phi) = \int_{\gamma F} \vert \phi (\tm)\vert^2 d\tm >0$. Again replacing $\phi$ by $\frac{\phi}{\sqrt{\beta_\gamma(\phi)}}$ we can assume that $\beta_\gamma (\phi)=1$ and that $\phi$ vanishes on $\Gamma\maF(\ep)$. Denote now by $L$ any open relatively compact subspace of $M$ which contains $K$, one can take for instance the $1$-neighborhood  of $K$, say 
$$
L=\{m\in M, d(m, K)<1\}.
$$
Applying Proposition \ref{C1estimate},  we deduce  the existence of $C>0$, independent of $\epsilon$ and $\phi$, such that:
$$
\vert \phi (\tm) \vert \leq C \times \epsilon, \text{ for any }\tm\in \gamma F\cap \pi^{-1} (K).
$$
Hence $\vert \vert \phi\vert\vert_{\gamma F\cap \pi^{-1} (K)} \leq C'  \times \epsilon$ for some constant  $C'>0$ which is independent of $\epsilon$ and $\phi$. But Lemma \ref{PSCestimate} then allows to  conclude  that there exists a constant $C'>0$ such that 
$$
C'\epsilon \geq \frac{\kappa_0}{\kappa_0+c}.
$$
If $\epsilon$ is small enough,  we get a contradiction. Whence we conclude that $\dim_\Gamma (\Ker (\tD)) < +\infty$. 

Inorder to complete the proof of Theorem \ref{Main}, we now  prove with a different direct method  that $P_\ep$ has $\tau$-finite range. Notice that if $\sigma$ is a non trivial $L^2$-section which belongs to the range of $P_\ep$, then 
$$
 \vert\vert \tD\sigma\vert\vert^2\leq \ep \vert\vert \sigma\vert\vert ^2.
$$
Therefore and using our assumption that $\maR\geq \kappa_0 \Id$ off the compact subspace $K$, and that $\maR \geq -c\Id$ over the whole of $M$, we deduce that for $\ep$ small enough
$$
\vert\vert \sigma\vert\vert^2_{\pi^{-1}(K)} \geq \frac{\kappa_0-\ep}{\kappa_0+c} \vert\vert \sigma\vert\vert^2.
$$
Said differently, if we denote by $p_K$ the $\Gamma$-invariant orthogonal projection from $L^2(\tM, \tS)$ onto  the Hilbert subspace $L^2(\pi^{-1}(K), \tS)$, then for $\ep$ small enough, the restriction of $p_K$ to the range of $P_\ep$ is bounded below.
We now choose a bounded $\Gamma$-invariant operator (a parametrix) $\tQ$ such that $\tS=\Id - \tQ\tD$ is a smoothing operator with finite propagation. This can be achieved for instance by lifting to $\tM$ a localized enough near the diagonal pseudodifferential parametrix  for $D$ modulo smoothing opertors on $M$, see for instance \cite{AtiyahCovering}. Composing on the left with the projection $p_K$ we get
$$
p_K \tS = p_K - (p_K \tQ)\tD.
$$
Hence for $\sigma$ in the range of $P_\ep$, we can write
$$
\vert\vert p_K\tS \sigma\vert\vert \geq \vert\vert \sigma\vert\vert_{\pi^{-1} (K)} - \sqrt{\ep}\, \vert\vert \tQ\vert\vert \times \vert\vert \sigma\vert\vert\geq \left( \sqrt{\frac{\kappa_0-\ep}{\kappa_0+c}} - \sqrt{\ep}\, \vert\vert \tQ\vert\vert\right) \times \vert\vert \sigma\vert\vert.
$$
Hence for small enough $\ep$, the operator $p_K\tS$ is a $\Gamma$-equivariant  isomorphism from the range of $P_\ep$ to the range of $p_K\tS P_\ep$, the two being closed $\Gamma$-invariant subspaces of $L^2(\tM, \tS)$. Since $p_K\tS$ is a compact operator relative to the von Neumann algebra $\maM$, we conclude that  $P_\ep$ must be a compact operator relative to $\maM$ and therefore has a finite $\tau$-trace. Indeed, one easily checks for instance that $p_K\tS$ is a Hilbert-Schmidt operator relative to the trace $\tau$, see again \cite{AtiyahCovering}.

\end{proof}

\medskip

\section{Compatibility  with the  higher index}

\subsection{Review of the Xie-Yu higher index}

Given two Hilbert modules $\maE_1$ and $\maE_2$, the space of adjointable operators from $\maE_1$ to $\maE_2$ will be denoted $\Mor (\maE_1, \maE_1)$ and when $\maE_1=\maE_2=\maE$ then we denote the resulting unital $C^*$-algebra by $\Mor (\maE)$. Recall that the subspace $\maK(\maE_r)$ of $\Mor (\maE_r)$ composed of compact operators is a closed two-sided involutive ideal, see \cite{KasparovStinespring} for more details. In \cite{XieYu}, Xie and Yu proved that when the spin even dimensional manifold $M$ has a complete metric $g$ with PSC near infinity, then the operator $\maD_r$ obtained by twisting the spin-Dirac operator $D_g$ with the flat reduced Michschenko bundle, admits a well defined higher index class 
$$
\Ind (\maD_r) \in K_0 (C_r^*\Gamma).
$$
Here $C^*_r\Gamma$ is  the regular $C^*$-algebra associated with $\Gamma$ and $\maD_r$ acts on the $L^2$ sections of $S\otimes \Xi$, with $\Xi = \tM\times_\Gamma C^*_r\Gamma$ being the flat Michschenko  bundle with fibers $C^*_r\Gamma$ viewed as a module over itself.  It is well known that $\maD_r$ is essentially self-adjoint and yields a regular self-adjoint operator, still denoted $\maD_r$, with domain contained (in general strictly) in the Hilbert module Sobolev space $L^2_1 (M, S\otimes \Xi)$ that one defines using the spin connection $\nabla$ as before  twisted by the flat connection on the bundle of modules $\Xi$. 

Let us recall the construction of the higher index class from \cite{XieYu}, which as we shall see  remains valid for all generalized Dirac operators which are invertible near infinity. We thus assume that the generalized Dirac operator $D$ on $M$ has square given by $\nabla^*\nabla+\maR$ with a zero-th order operator $\maR$ such that  $\maR\geq  \kappa_0\Id$ off some compact subspace $K$ of $M$. There exists then a smooth compactly supported real valued function $\rho\in C_c^\infty (M)$ and a constant $c>0$ such that the pointwise operator inequality 
$$
 \maR \geq (c - \rho^2) \Id,
$$ 
holds over the whole of $M$. This discussions works as well for the Hilbert module operator $\maD_r$ by replacing $\maR$ by its tensor product still denoted $\maR$ by the fiberwise identity of $\Xi$.
We shall denote as well by $\rho$ the smooth function on $\tM$ which is the pull-back of $\rho$.
We concentrate on the construction with the regular completions (the maximal completions are similar) and we  add some details for the convenience of the reader.  Let us consider the operator $\maF:=\maD_r(\maD_r^2+\rho^2)^{-1/2} $ acting on  $\maE_r$, where $(\maD_r^2+\rho^2)^{-1/2}$ is by definition the square root of the self-adjoint non-negative \underline{bounded} operator $(\maD_r^2+\rho^2)^{-1}$, see \cite{XieYu}. As we shall see this operator can also be defined equivalently as the inverse of the square root of the regular self-adjoint non-negative operator $\maD_r^2+\rho^2$. Another equivalent and convenient definition is
$$
(\maD_r^2+\rho^2)^{-1/2} := \frac{1}{\pi} \int_0^{+\infty}\;  \frac{ (\maD_r^2+\rho^2)^{-1}}{ (\maD_r^2+\rho^2)^{-1} + \mu}\;\;  \frac{d\mu}{\sqrt{\mu}},
$$
where the integral converges in the operator norm.  The operator $\maF$ is in fact an adjointable operator as we shall see. Notice first that we take a priori as initial  domain 
$$
\Dom (\maF)=\{\sigma\in \maE_r\, \vert\, (\maD_r^2+\rho^2)^{-1/2}\sigma\in \Dom (\maD_r)\},
$$
and $\maF$ is then closed.
In the same way the operator $\maG:= (\maD_r^2+\rho^2)^{-1/2} \maD_r$ is closed  when we take as domain that of the operator $\maD_r$. Moreover,   for $\sigma\in \Dom (\maF)$ and $\sigma'\in \Dom (\maD_r)$, we have
$$
\langle \maF\sigma, \sigma'\rangle = \langle \sigma, \maG\sigma'\rangle,
$$
and the domain of $\maF$ can be seen to be exactly equal to the adjoint domain of  $\maG$.

\begin{lemma}
The operator $\maG\maF$  is a bounded (self-adjoint and non-negative) operator on the Hilbert module $\maE_r$. In particular, so is $\maF^*\maF$ and we have $\maF^*\maF=\maG\maF$. 
\end{lemma}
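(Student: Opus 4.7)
The plan is to derive, for $\sigma$ in a suitably regular dense subspace of $\maE_r$, the algebraic identity
$$
\maG\maF\,\sigma \;=\; (\maD_r^2+\rho^2)^{-1/2}\,\maD_r^2\,(\maD_r^2+\rho^2)^{-1/2}\,\sigma \;=\; \bigl(I - (\maD_r^2+\rho^2)^{-1/2}\,\rho^2\,(\maD_r^2+\rho^2)^{-1/2}\bigr)\sigma,
$$
which exhibits $\maG\maF$ as the difference between $I$ and a manifestly bounded, self-adjoint, non-negative operator, hence bounded non-negative with norm $\leq 1$.

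First I would observe that by the very choice of the cut-off $\rho$, one has $\maD_r^2 + \rho^2 \ge c\,\Id$ on the whole module: indeed, the lifted Lichnerowicz identity $\maD_r^2 = \nabla^*\nabla + \maR$ together with $\maR \ge (c-\rho^2)\Id$ and $\nabla^*\nabla \ge 0$ yields this inequality as regular self-adjoint operators. Consequently $(\maD_r^2+\rho^2)^{-1}$ exists as a bounded adjointable operator of norm $\le 1/c$, the integral representation for $(\maD_r^2+\rho^2)^{-1/2}$ recalled just above converges in operator norm, and $(\maD_r^2+\rho^2)^{-1/2}$ is a bounded adjointable non-negative operator that maps $\maE_r$ into $\Dom\bigl((\maD_r^2+\rho^2)^{1/2}\bigr)$. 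By boundedness of $\rho$ and the usual form inequality $\|\maD_r\eta\|^2 \leq \|(\maD_r^2+\rho^2)^{1/2}\eta\|^2$ for $\eta$ in that form domain, this image lies in $\Dom(\maD_r)$, so $\maF = \maD_r(\maD_r^2+\rho^2)^{-1/2}$ is actually defined on all of $\maE_r$.

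Next, working on the dense subspace $\Ran((\maD_r^2+\rho^2)^{-1}) \subset \Dom(\maD_r^2)$, I would apply the algebraic identity $\maD_r^2 = (\maD_r^2+\rho^2) - \rho^2$ to obtain the displayed formula for $\maG\maF$; the commutation with the functional-calculus operator $(\maD_r^2+\rho^2)^{-1/2}$ on this subspace is legitimate by the Woronowicz/Kato functional calculus for regular self-adjoint operators on Hilbert modules. Since the right-hand side is bounded adjointable, $\maG\maF$ extends by density to a bounded operator on $\maE_r$; self-adjointness and non-negativity follow from $0\le \rho^2 \le \maD_r^2 + \rho^2$.

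Finally, the equality $\maF^*\maF = \maG\maF$ is extracted from the duality $\langle \maF\sigma,\sigma'\rangle = \langle \sigma,\maG\sigma'\rangle$ noted in the excerpt: once $\maG\maF$ is shown bounded on a core for $\maF$, a standard closability argument gives that $\maF$ itself is adjointable with $\maF^*$ equal to the closure of $\maG$, so $\maF^*\maF$ and $\maG\maF$ agree as elements of $\Mor(\maE_r)$. The main obstacle I anticipate is the careful tracking of domains in the Hilbert $C^*$-module setting: one must justify that $(\maD_r^2+\rho^2)^{-1/2}$ really lands in $\Dom(\maD_r)$ (not merely the form domain), and that the algebraic manipulation of $\maD_r^2$ as $(\maD_r^2+\rho^2)-\rho^2$ is valid on a set that is a core for both $\maF$ and $\maG$. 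Both points are settled by invoking the regular functional calculus together with an intersection-of-cores argument, but this is the only place where any real technical care is required.
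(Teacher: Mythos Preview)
Your proposal is correct and follows essentially the same route as the paper: both hinge on the algebraic identity
$$
\maG\maF \;=\; I - (\maD_r^2+\rho^2)^{-1/2}\,\rho^2\,(\maD_r^2+\rho^2)^{-1/2}
$$
on a dense domain, from which boundedness, self-adjointness and non-negativity are read off directly.

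One small remark on the last paragraph: to conclude $\maF^*\maF=\maG\maF$ you invoke that ``$\maF$ itself is adjointable with $\maF^*$ equal to the closure of $\maG$.'' In the paper this adjointability is precisely the content of the \emph{following} proposition, not the present lemma, and it is deduced there from the boundedness of $\maF^*\maF$ via regularity and the Woronowicz transform. The paper's argument within the lemma is more economical: from the duality $\langle\maF\sigma,\sigma'\rangle=\langle\sigma,\maG\sigma'\rangle$ one has $\maG\subset\maF^*$, hence $\maG\maF\subset\maF^*\maF$ as unbounded operators; since the closure of $\maG\maF$ is the bounded operator $B$ displayed above, $\maF^*\maF$ must equal $B$. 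So you don't need adjointability of $\maF$ at this stage, and bringing it in makes the logical order slightly circular relative to the paper's organization. Your self-identified domain issue (that $(\maD_r^2+\rho^2)^{-1/2}$ lands in $\Dom(\maD_r)$, not merely the form domain) is handled exactly as you suggest, via the operator inequality $\maD_r^2\le\maD_r^2+\rho^2$ and the equality $\Dom(|\maD_r|)=\Dom(\maD_r)$ for regular self-adjoint operators; the paper sidesteps this by simply taking $\Dom(\maF)=\{\sigma:(\maD_r^2+\rho^2)^{-1/2}\sigma\in\Dom(\maD_r)\}$ as the initial domain.
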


\begin{proof}
Notice that $
\maG\maF = (\maD_r^2+\rho^2)^{-1/2}\maD_r^2  (\maD_r^2+\rho^2)^{-1/2}$
has dense domain  in $\maE_r$.  It is easy to check that the operator $ (\maD_r^2+\rho^2)^{-1/2}(\maD_r^2+\rho^2)  (\maD_r^2+\rho^2)^{-1/2}$ extends to the identity operator so that we can write for $\sigma \in \Dom (\maG\maF)$:
$$
\maG\maF \sigma = \sigma -  (\maD_r^2+\rho^2)^{-1/2}\rho^2  (\maD_r^2+\rho^2)^{-1/2} \sigma.
$$
The operator $(\maD_r^2+\rho^2)^{-1/2}\rho^2  (\maD_r^2+\rho^2)^{-1/2}$ being bounded, we conclude that the operator $\maG\maF$ extends to a bounded operator on $\maE_r$ which coincides with the operator $\Id - (\maD_r^2+\rho^2)^{-1/2}\rho^2  (\maD_r^2+\rho^2)^{-1/2}$. Moreover, if $\sigma'\in \Dom (\maF)$ is such that $\maF\sigma'\in \Dom (\maG)=\Dom(\maD_r)\subset \Dom (\maF^*)$ then $\maF\sigma'\in \Dom (\maF^*)$ and we can write for any $\sigma\in \maE_r$
$$
\langle \maG\maF\sigma, \sigma'\rangle = \langle \maF\sigma, \maF\sigma'\rangle = \langle \sigma, \maF^*\maF\sigma'\rangle = \langle \sigma, \maG\maF\sigma'\rangle.
$$
We have used the relations $\maG\subset \maF^*$ and $\maF=\maG^*$. Notice finally that for any $\sigma\in \Dom (\maD_r^2)$ we have
$$
\langle \rho^2\sigma, \sigma\rangle \leq \langle (\maD_r^2+\rho^2)\sigma, \sigma\rangle.
$$
Therefore the operator $\maG\maF$ is indeed non-negative. Recall that $\maG\maF\subset \maF^*\maF$, hence we conclude that we also the equality $\maG\maF=\maF^*\maF$ of adjointable operators.
\end{proof}

\begin{remark}
Using the above integral expression, it is easy to check that the densely defined commutator $[\maD_r, (\maD_r^2+\rho^2)^{-1/2}]$ extends to a compact adjointable operator. This is a consequence of the fact that $\rho$ and $[\maD_r, \rho]$ have compact supports in $M$ and $(\maD_r^2+\rho^2)^{-1/2}$ is bounded from $L^2$ to $L^2_1$, so that an easy application of a Rellich lemma in this context, see \cite{XieYu} allows to conclude. Hence  the operator $\maF - \maG$ is a compact operator on the Hilbert module $\maE_r$. 
\end{remark}

\medskip

\begin{proposition}\cite{XieYu}
The closed operators $\maF$ and $\maG$ are adjointable. Moreover, the operators $\Id-\maF^2$ and $\Id - \maG^2$ are compact operators of the Hilbert module $\maE_r$. 
\end{proposition}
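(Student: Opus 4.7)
The plan is to leverage the lemma and remark just proved: the lemma identifies $\maF^*\maF = \maG\maF$ with the explicit bounded operator $\Id - (\maD_r^2+\rho^2)^{-1/2}\rho^2(\maD_r^2+\rho^2)^{-1/2}$, and the remark says $\maF-\maG$ is a compact operator of $\maE_r$. Everything else should follow by algebraic manipulations together with one Rellich-type compactness input.

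First I would establish adjointability. Using the Hilbert module $C^*$-inequality $\langle \maF\sigma,\maF\sigma\rangle \leq \|\maF^*\maF\|\, \langle \sigma,\sigma\rangle$ on $\Dom(\maF)$, the boundedness of $\maG\maF$ from the lemma forces $\maF$ to extend to a bounded operator on all of $\maE_r$; since $\maF$ is closed on a dense domain, this extension coincides with $\maF$. An entirely symmetric computation, applied to $\maF\maG = (\maD_r^2+\rho^2)^{-1/2}\maD_r^2(\maD_r^2+\rho^2)^{-1/2} = \Id - \rho(\maD_r^2+\rho^2)^{-1}\rho$, shows $\maG$ is bounded. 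The inclusion $\maG \subset \maF^*$ recorded before the lemma then upgrades to the equality $\maG = \maF^*$ of adjointable operators, since both are everywhere defined and bounded.

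Next, for the compactness statements, the key input is that $\rho \in C_c^\infty(M)$ is compactly supported, combined with the smoothing property $(\maD_r^2+\rho^2)^{-1/2}: \maE_r \to L^2_1(M,S\otimes \Xi)$. A Rellich lemma in the Hilbert module setting, as invoked in the remark (and as used in \cite{XieYu}), then ensures that $\rho(\maD_r^2+\rho^2)^{-1/2}$, and hence $(\maD_r^2+\rho^2)^{-1/2}\rho^2(\maD_r^2+\rho^2)^{-1/2}$, is a compact operator of $\maE_r$. Using the lemma, this gives $\Id - \maG\maF \in \maK(\maE_r)$.

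To handle the two-sided statements, I would write the algebraic identities
\begin{equation*}
\Id - \maF^2 = (\Id - \maG\maF) + (\maG - \maF)\maF, \qquad \Id - \maG^2 = (\Id - \maG\maF) + \maG(\maF - \maG).
\end{equation*}
In each case the first summand is compact by the previous step, while the second summand is the product of a compact adjointable operator (by the remark) with a bounded adjointable one, hence compact. This yields $\Id - \maF^2, \Id - \maG^2 \in \maK(\maE_r)$, completing the proposition. The main obstacle I anticipate is a careful justification of the Rellich-type compactness for the ``cut-off'' resolvent $\rho(\maD_r^2+\rho^2)^{-1/2}$ in the Hilbert $C^*_r\Gamma$-module context; this is standard but technical, and one should either quote the precise statement from \cite{XieYu} or verify that the local parametrix argument used elsewhere in the paper produces a compact adjointable operator (not merely a bounded one with small tails).
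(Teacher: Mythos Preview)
Your overall strategy is sound and is in fact more elementary than the paper's, but there is a computational slip you should repair. You write $\maF\maG = (\maD_r^2+\rho^2)^{-1/2}\maD_r^2(\maD_r^2+\rho^2)^{-1/2}$; this is the formula for $\maG\maF$, not for $\maF\maG$, since $\maD_r$ and $(\maD_r^2+\rho^2)^{-1/2}$ do not commute (precisely because $\rho$ is non-constant and $[\maD_r,\rho]\neq 0$). The correct expression is $\maF\maG = \maD_r(\maD_r^2+\rho^2)^{-1}\maD_r$, and bounding it is not ``entirely symmetric'' to the lemma: to control the commutator term you already need $\maF$ bounded. The cleanest fix is to bypass this altogether. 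Once you have $\maF$ bounded via the $C^*$-inequality $\langle \maF\sigma,\maF\sigma\rangle = \langle \maG\maF\sigma,\sigma\rangle \leq \|\maG\maF\|\,\langle\sigma,\sigma\rangle$ on $\Dom(\maG\maF)$, simply invoke the remark (that $\maF - \maG$ extends to a compact, hence bounded, adjointable operator) and write $\maG = \maF - (\maF - \maG)$ to conclude $\maG$ is bounded. Your adjointability argument ($\maG=\maF^*$ once both are everywhere defined) and your compactness decompositions then go through exactly as written.

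For comparison, the paper takes a more structural route to adjointability: from the boundedness of $\maF^*\maF$ it concludes that $\Id+\maF^*\maF$ is surjective, hence that $\maF$ is a \emph{regular} operator, and then invokes the Woronowicz bounded transform $\maF(\Id+\maF^*\maF)^{-1/2}$ to get adjointability. For compactness, the paper expands $\Id - \maF^2$ directly as $(\maD_r^2+\rho^2)^{-1/2}\rho^2(\maD_r^2+\rho^2)^{-1/2}$ plus a commutator term, each compact by Rellich, and then deduces the statement for $\maG=\maF^*$ by taking adjoints. Your approach---the $C^*$-module inequality for boundedness, and the algebraic identities $\Id - \maF^2 = (\Id - \maG\maF) + (\maG-\maF)\maF$ and $\Id - \maG^2 = (\Id - \maG\maF) + \maG(\maF-\maG)$ for compactness---is more hands-on and sidesteps the regularity/Woronowicz machinery, at the modest cost of using the remark on $\maF-\maG$ twice.
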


\begin{proof}
Let us check first that the closed operator $\maF$ is bounded.  By the previous lemma, we know that if $\maF^*$ is the adjoint of the closed operator $\maF$ then the operator $
\maF^*\maF$ is a bounded non-negative operator on the Hilbert module $\maE_r$. Therefore the operator $\Id +\maF^*\maF$  is an adjointable self-adjoint  invertible operator, and hence is surjective so that $\maF$ is in fact a regular  operator on $\maE_r$.  The square root of $\Id +\maF^*\maF$ is then also an adjointable self-adjoint invertible operator with the inverse given precisely by our previously defined operator $(\Id + \maF^*\maF)^{-1/2}$, by the classical properties of continuous functional calculus of regular self-adjoint operators. Now, $\maF$ being regular, by the properties of the Woronowicz transform \cite{SkandalisCourse}, we  see that the operator $\maF (\Id + \maF^*\maF)^{-1/2}$ is an adjointable operator. We conclude that the operator 
$$
\maF = \maF (\Id + \maF^*\maF)^{-1/2} (\Id + \maF^*\maF)^{1/2},
$$
is adjointable with the adjoint given by $\maF^*$. As a corollary, we deduce that the densely defined operator $\maG$ is also extendable to an adjointable operator which coincides with $\maF^*$. 

Computing on a dense subspace, one deduces the following equality of adjointable operators
\begin{eqnarray*}
\Id - \maF^2 = (\maD_r^2+\rho^2)^{-1/2}\rho^2 (\maD_r^2+\rho^2)^{-1/2} + [(\maD_r^2+\rho^2)^{-1}, \maD_r] \maF.
\end{eqnarray*} 
As we have already pointed out, the operator $[(\maD_r^2+\rho^2)^{-1}, \maD_r]$ is a compact operator on the Hilbert module $\maE_r$ by a generalized version of the Rellich lemma \cite{XieYu}, and $\maF$ is adjointable so that the second term in the RHS is compact. On the other hand, and also by the compact support of $\rho^2$, the fact that  $(\maD_r^2+\rho^2)^{-1/2}$ is bounded with image in $L^2_1$, we deduce again by the Rellich lemma in the Mischenko-Fomenko calculus \cite{XieYu} that the operator $(\maD_r^2+\rho^2)^{-1/2}\rho^2 (\maD_r^2+\rho^2)^{-1/2}$ is a compact operator on the Hilbert module $\maE_r$. This ends the proof for $\maF$. Since $\maG=\maF^*$, the proof of the proposition is now complete.
\end{proof}

We deduce from the previous lemma that the operator 
$$
\maG^+:= (\maD_r^+\maD_r^- +\rho^2)^{-1/2} \maD_r^+: \maE_r^+\longrightarrow \maE_r^-,
$$
is a Fredholm operator between these Hilbert modules with quasi inverse given by 
$$
\maG^-:=  (\maD_r^-\maD_r^+ +\rho^2)^{-1/2}\maD_r^-: \maE_r^-\longrightarrow \maE_r^+.
$$
Therefore $\maG^+$ admits a well defined index class in $K_0 (\maK (\maE_r))$ where $\maK (\maE_r)$ is the $C^*$-algebra of compact operators on the Hilbert module $\maE_r$. We may as well use $\maF^+$ and we would end up with the same class as already observed. This index class  is given more precisely by the $K$-class $[e]-[f]$ where $e$ and $f$ are the two self-adjoint projections on $\maE_r$ which satisfy that $e-f\in \maK (\maE_r)$:
$$
e:=\left(\begin{array}{cc} (\maS^+)^2 & \maS^+ \maG^- \\ \maG^+\maS^+ (\Id_{\maE_r^+}+\maS^+) & \Id_{\maE_r^-} -(\maS^-)^2 \end{array}\right) \text{ and } f=\left(\begin{array}{cc} 0  & 0 \\ 0 & \Id_{\maE_r^-} \end{array}\right)
$$
where 
$$
\maS^+ = \Id_{\maE_r^+} - \maG^-\maG^+\text{ and }\maS^- = \Id_{\maE_r^-} - \maG^+ \maG^-.
$$
That $[e]-[f]$ defines a class in $K_0(\maK (\maE_r))$ is standard.

 \medskip

\begin{definition}\cite{XieYu}\
The higher index class $\Ind (\maD_r^+)$ of $\maD_r^+$ is by definition the image of the index class of $\maG^+$ in $K_0(C^*_r\Gamma)$ under the isomorphism $K_0 (\maK (\maE_r))\simeq K_0(C^*_r\Gamma)$ induced by the Morita equivalence $\maK (\maE_r)\sim C^*_r\Gamma$, so $
\Ind (\maD_r^+) \in K_0(C^*_r\Gamma).$
\end{definition}

\medskip

\begin{remark}
From the relations $\Id - \maF^2\in \maK(\maE_r)$ and $\Id - \maG^2\in \maK(\maE_r)$, we deduce that the operator $\maQ:=(\maD^-_r\maD_r^+ +\rho^2)^{-1/2} \maD^-_r  (\maD_r^+\maD_r^- +\rho^2)^{-1/2}$ is a parametrix for $\maD^+_r$ with remainders in $\maK (\maE_r^\pm)$.
\end{remark}

We may as well give the same construction of the maximal index class $\Ind (\maD_m^+)$ living in $K_0(C^*_m\Gamma)$ by using maximal completions everywhere. Notice that  by using the natural $C^*$-homomorphism from $C^*_m\Gamma$ to  $C^*_r\Gamma$, one can as well recover the  index class $\Ind (\maD_r)$  as the image of $\Ind (\maD_m)$, see for instance \cite{BenameurPiazza}. 

\medskip

\subsection{Traces and numerical indices}

The trivial $1$-dimensional representation of $\Gamma$ gives a $C^*$-algebra homomorphism $C^*_m\Gamma \to \C$, this  is the so-called average trace $\tau^{\av}$. In the same way the regular trace $\tau^{\reg}$ is a finite trace on $C^*_r\Gamma$. These traces are defined on finitely supported functions $f\in \C\Gamma$ by
$$
\tau^{\av} (f) := \sum_{\gamma\in \Gamma} f(g)\text{ and } \tau^{\reg} (f) := f(e).
$$
They induce the group homomorphisms 
$$
\tau^{\av}_* : K_0(C^*_m\Gamma) \longrightarrow \R\quad \text{ and } \quad \tau^{\reg}_* : K_0(C^*_r\Gamma) \longrightarrow \R.
$$ 
\vspace{0,01cm}
\begin{theorem}\label{Compatibility}
The following relations hold
$$
\tau^{\av}_* (\Ind (\maD_m)) = \Ind (D)\; \text{ and } \; \tau^{\reg}_* (\Ind (\maD_r)) = \Ind_{(2)} (\tD),
$$
where $\Ind (D)$ is the Gromov-Lawson index of  the generalize Dirac operator $D$ on $M$ while  $\Ind_{(2)} (\tD)$ is our $L^2$ index of the lifted  $\Gamma$-invariant generalized Dirac operator $\tD$ on $\tM$.
\end{theorem}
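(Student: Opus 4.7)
The plan is to reduce both identities to a single principle: the traces $\tau^{\av}$ and $\tau^{\reg}$ are induced by natural representations of $C^*_m\Gamma$ and $C^*_r\Gamma$, and under these representations the Mishchenko--Fomenko construction of $\Ind(\maD_m)$ and $\Ind(\maD_r)$ collapses either onto the classical $L^2$ picture on $M$ (for $\tau^{\av}$) or onto the Atiyah $\Gamma$-equivariant picture on $\tM$ (for $\tau^{\reg}$).

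For the average trace identity, the trivial representation $C^*_m\Gamma \to \C$ which realizes $\tau^{\av}$ yields an isometric identification $\maE_m \otimes_{C^*_m\Gamma} \C \simeq L^2(M, S)$ intertwining $\maD_m$ with $D$. Under this descent $\maG$ becomes $D(D^2+\rho^2)^{-1/2}$ and the Hilbert module parametrix produced in the Xie--Yu construction becomes a classical compact parametrix for $D$, so by functoriality the class $[e]-[f] \in K_0(\maK(\maE_m))$ maps to the usual Fredholm index class of $D^+$ in $K_0(\maK(L^2(M, S))) \simeq \Z$. Chasing through the Morita equivalence $\maK(\maE_m) \sim C^*_m\Gamma$ and the standard compatibility $\tau^{\av}_* \simeq \Tr_*$ for the trivial representation then yields $\tau^{\av}_*(\Ind(\maD_m)) = \Ind(D)$.

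For the regular trace identity, I would work with the regular representation $\pi^{\reg}: C^*_r\Gamma \to B(\ell^2\Gamma)$, which induces a $\Gamma$-equivariant unitary isomorphism $\maE_r \otimes_{\pi^{\reg}} \ell^2\Gamma \simeq L^2(\tM, \tS)$ sending $\maD_r \otimes 1$ to $\tD$. Under this identification $\maK(\maE_r)$ is represented inside Atiyah's algebra $\maM$ as an ideal of $\tau$-compact operators, and by the results of \cite{BenameurRoy} the Morita correspondence $\maK(\maE_r) \sim C^*_r\Gamma$ intertwines $\tau^{\reg}$ with the restriction of $\tau$. It therefore suffices to establish $\tau(\tilde e - \tilde f) = \Ind_{(2)}(\tD^+)$, where $\tilde e, \tilde f$ denote the images of the projections $e, f$ in $\maM$.

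To close this last step, I would use that $(\tD^{\mp}\tD^{\pm} + \rho^2)^{-1/2}$ is bounded and injective, so the $\tau$-kernel of the image $\tilde{\maG}^{\pm}$ of $\maG^{\pm}$ coincides with $\Ker \tD^{\pm}$, which is of finite $\Gamma$-dimension by Theorem \ref{Main}; moreover, the remainders $\tilde{\maS}^{\pm} = \Id - \tilde{\maG}^{\mp}\tilde{\maG}^{\pm}$ are $\tau$-trace class thanks to the finite $\tau$-dimension of the low-lying spectral projections of $\tD^2$. The semifinite version of the abstract index identity then gives
$$
\tau(\tilde e - \tilde f) = \tau((\tilde{\maS}^+)^2) - \tau((\tilde{\maS}^-)^2) = \dim_\Gamma \Ker \tD^+ - \dim_\Gamma \Ker \tD^- = \Ind_{(2)}(\tD^+).
$$
The main obstacle will be the rigorous verification of the Morita compatibility $\tau^{\reg} \leftrightarrow \tau$ on the specific ideals arising from the Xie--Yu construction together with the $\tau$-trace class property of $\tilde{\maS}^{\pm}$; both points genuinely require Theorem \ref{Main} and the Hilbert module techniques of \cite{BenameurRoy}, since without the finite $\tau$-trace of $P_\ep$ for some $\ep > 0$, the image of $\Ind(\maD_r)$ in $\maM$ would not be a $\tau$-trace class element and the pairing with $\tau^{\reg}$ would have no direct $L^2$ interpretation.
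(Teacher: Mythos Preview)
Your overall architecture is correct and matches the paper: descend via the representations $\pi^{\av}$ and $\pi^{\reg}$, use the identifications $\maE_m\otimes_{\pi^{\av}}\C\simeq L^2(M,S)$ and $\maE_r\otimes_{\pi^{\reg}}\ell^2\Gamma\simeq L^2(\tM,\tS)$ from \cite{BenameurRoy}, observe that $\Phi_{\reg}$ sends $\maK(\maE_r)$ into $\maK(\maM,\tau)$ and intertwines the Calkin sequences, and finish via the Morita compatibility of $\tau^{\reg}$ and $\tau$. The average-trace half is exactly as you describe.

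The gap is in your last displayed step. You assert that the images $\tilde\maS^{\pm}=\Id-\tG^{\mp}\tG^{\pm}$ of the Xie--Yu remainders are $\tau$-trace class ``thanks to the finite $\tau$-dimension of the low-lying spectral projections''. This is not true in general, and that justification does not apply. The morphism $\Phi_{\reg}$ only carries $\maK(\maE_r)$ into the ideal of $\tau$-\emph{compact} operators, not into $L^1(\maM,\tau)$. Concretely, the principal contribution to $\Id-\maG^2$ is $(\maD_r^2+\rho^2)^{-1/2}\rho^2(\maD_r^2+\rho^2)^{-1/2}$, whose image in $\maM$ is $(\tD^2+\rho^2)^{-1/2}\rho^2(\tD^2+\rho^2)^{-1/2}$; this is a nonnegative $\tau$-compact operator, but its $\tau$-trace equals $\tau\bigl(\rho(\tD^2+\rho^2)^{-1}\rho\bigr)$, i.e.\ the integral over $\operatorname{Supp}\rho$ of the diagonal of the Schwartz kernel of a pseudodifferential operator of order $-2$, which diverges as soon as $\dim M\geq 2$. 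So neither $\tilde\maS^{\pm}$ nor in general $(\tilde\maS^{\pm})^2$ is $\tau$-trace class, and the Calder\'on-type formula you wrote is not available. The finiteness of $\tau(P_\ep)$ says nothing about these particular remainders, which are built from $\rho$ and $(\tD^2+\rho^2)^{-1/2}$ rather than from spectral projections of $\tD$.

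The paper closes exactly this gap by \emph{changing the parametrix after descending to $\maM$}. Since the index class $\Ind_\maM(\tG^+)\in K_0(\maK(\maM,\tau))$ is the image of a boundary map, it can be represented using \emph{any} parametrix for $\tG$ in $\maM$ modulo $\maK(\maM,\tau)$, not necessarily one in the range of $\Phi_{\reg}$. Using Theorem \ref{Main}, one fixes $\ep>0$ with $\tau(P_\ep)<\infty$, sets $\tQ_\ep:=f_\ep(\tD)$ with $f_\ep(x)=1/x$ for $|x|>\sqrt{\ep}$ and $0$ otherwise, and takes $\tH:=\tQ_\ep(\tD^2+\rho^2)^{1/2}\in\maM$. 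Then $\tH\tG=\Id-P_\ep$ and $\tG\tH=\Id-(\tD^2+\rho^2)^{-1/2}P_\ep(\tD^2+\rho^2)^{1/2}$, and \emph{both} remainders are now genuinely $\tau$-trace class because $P_\ep$ is and the surrounding factors are bounded (the unbounded factor $P_\ep(\tD^2+\rho^2)^{1/2}$ is handled by writing it as $(P_\ep\tD^2+P_\ep\rho^2)(\tD^2+\rho^2)^{-1/2}$). Applying $\tau$ to the resulting idempotent representative and using the hypertrace property gives $\tau(P_\ep^+)-\tau(P_\ep^-)$; normality of $\tau$ then lets $\ep\to 0$ to obtain $\dim_\Gamma\Ker\tD^+-\dim_\Gamma\Ker\tD^-$. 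This substitution of parametrix is the place where Theorem \ref{Main} is really used, and it is the missing idea in your argument.
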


\medskip
\begin{remark}
We shall fully use for the proof of the second relation in Theorem \ref{Compatibility} our Theorem \ref{Main}, say that there exists $\ep>0$ such that $P_\ep$ is $\tau$-trace class in the von Neumann algebra $\maM$.
\end{remark}

\begin{proof}
Denote by $\pi^{\reg}$ and $\pi^{\av}$ the regular and average representations of $C^*_r\Gamma$ and $C^*_m\Gamma$ respectively that we can see as being both representations of $C^*_m\Gamma$ by using the natural morphism $C^*_m\Gamma\to C^*_r\Gamma$. By \cite{BenameurRoy}[Propositions 2.3 $\&$ 2.4], the composition Hilbert spaces $\maE_r\otimes_{\pi^{\reg}} \ell^2\Gamma$ and $\maE_m\otimes_{\pi^{\av}} \C$ are respectively isomorphic to $L^2(\tM, \tS)$ and $L^2(M, S)$ through the explicit isomorphisms denoted there $\Psi_{\reg}$ and $\Psi_{\av}$. Let us concentrate on the second equality first. The regular self-adjoint operator $\maD_r$ gives after composition with the regular representation and through conjugation with $\Psi_{\reg}$ the self-adjoint operator $\tD$ \cite{BenameurRoy}, i.e.
$$
\Psi_{\reg} (\maD_r\otimes_{\pi^{\reg}} \Id)\Psi_{\reg}^{-1} = \tD.
$$
The mapping $\Phi_{\reg}: T\mapsto \Psi_{\reg} (T\otimes_{\pi^{\reg}} \Id)\Psi_{\reg}^{-1}$ induces a well defined $C^*$-algebra morphism between the adjointable operators on $\maE_r$ and the bounded operators on $L^2(\tM, \tS)$ which  belong to the von Neumann algebra $\maM$ \cite{BenameurRoy}.
The last property that we shall need from \cite{BenameurRoy} is that $\Phi_{\reg}$ sends the ideal $\maK(\maE_r)$ of compact operators to the ideal of $\tau$-compact operators in $\maM$ and intertwines the corresponding short Calkin exact sequences, i.e.
\vspace{0,2cm}
\begin{equation*}\label{beta}
\hspace{-0,47cm}
\begin{CD}
0\to \maK(\maE_r) @>>>  \Mor (\maE_r)  @>>>  \Mor (\maE_r)/\maK(\maE_r) \to 0 \\
 \;\;@  V{\Phi{\reg}\;}VV                          @V{\Phi_{\reg}}VV       @V{\Phi_{\reg}}VV                           \\
 0\to  \maK (\maM, \tau) @>>>  \maM @>>> \maM/ \maK (\maM, \tau)\to 0  \\
\end{CD}
\end{equation*}
\vspace{0,2cm}
As a corollary, the boundary maps in $K$-theory are compatible, say
$$
\Phi_{\reg, *} \circ \pa = \pa \circ \Phi_{\reg, *}  \; :\; K_1 \left(\Mor (\maE_r)/\maK(\maE_r)\right)\longrightarrow K_0 \left(\maK (\maM, \tau)\right).
$$
We then immediately deduce the following relation
$$
\Phi_{\reg, *} \left[\Ind (\maG^+)\right] = \Ind_\maM \left( \Phi_{\reg} (\maG^+)\right),
$$
where $\Ind_\maM : K_1 (\maM/\maK (\maM, \tau))\rightarrow K_0(\maK (\maM, \tau))$ is the boundary map associated with the second exact sequence, and constructed as usual using any parametrix in $\maM$ modulo $\maK(\maM, \tau)$. 
Now, recall that $\Ind (\maD_r^+)=\Ind (\maG^+)$ by definition, and by compatibility of functional calculi with respect to compositions of Hilbert modules \cite{SkandalisCourse}, we have
$$
\Phi_{\reg} (\maG^+) = (\tD^+\tD^- +\rho^2)^{-1/2} \tD^+=:\tG^+: L^2(\tM, \tS^+) \longrightarrow L^2 (\tM, \tS^-).
$$
The operator $(\tD^+\tD^- +\rho^2)^{-1/2}$ is thus defined by the continuous functional calculus for bounded $\Gamma$-invariant operators on $L^2(\tM, \tS)$ from the bounded operator $(\tD^+\tD^- +\widetilde{\rho}^2)^{-1}$. 

Now we can use any parametrix  for $\tG^+$ in $\maM$ (not necessarily in the range of $\Phi_{\reg}$)  inorder to represent the index class $\Ind (\tG^+)\in K_0(\maK (\maM, \tau))$. Moreover, since $\tD$ is affiliated with $\maM$, we can also use directly any parametrix for the operator $\tD$. Now, by Theorem \ref{Main}, there exists $\ep>0$ such that the spectral projection $P_\ep:=1_{[0, \ep]} (\tD^2)$ is $\tau$-trace class. We consider for such $\ep$ the odd bounded Borel function $f_\ep : \R\to \R$ given by
$$
f_\ep (x):=\left\{\begin{array}{ccc}\frac{1}{x} &\text{ if } & \vert x\vert > \sqrt{\ep}\\
0 &\text{ if } & \vert x\vert \leq \sqrt{\ep} \end{array}\right.
$$
Then set $\tQ_\ep:= f_\ep (\tD)$, an element of $\maM$ which is odd for the grading. Since $1-xf_\ep (x) = 1_{[0, \ep]} (x^2)$, we have
$$
\Id - \tQ_\ep \tD = \Id - \tD \tQ_\ep = P_\ep \text{ and }\tQ_\ep = \left(\begin{array}{cc}0 & \tQ_\ep^-\\ \tQ_\ep^+ & 0\end{array}\right).
$$
Moreover, the operator $\tH := \tQ_\ep (\tD^2+\widetilde{\rho}^2)^{1/2}$ is also a bounded operator which belongs to $\maM$ and which is then a parametrix for the operator $\tG$. Indeed, one has for any $C>0$:
$$
 \tQ_\ep (\tD^2+C^2)^{1/2} = g_\ep (\tD) \text{ with } g_\ep (x) = \left\{\begin{array}{ccc} x/\sqrt{x^2+C^2}& \text{ if } \vert x\vert >\sqrt{\ep}\\ 0 & \text{ if } \vert x\vert\leq \sqrt{\ep}\end{array}\right.
$$
Therefore,  $\tQ_\ep (\tD^2+C^2)^{1/2}$ extends to a bounded operator which lives in $\maM$. Moreover, the operator $(\tD^2+C^2)^{-1/2}(\tD^2+\rho^2)^{1/2}$ also extends to a bounded operator which belongs to $\maM$. Hence, we deduce that  $\tH$ belongs to $\maM$. On the other hand, we have
$$
\tH\tG =  \Id - P_\ep \text{ while } \tG\tH = \Id - (\tD^2+\rho^2)^{-1/2} P_\ep \rho^2 (\tD^2+\rho^2)^{-1/2}.
$$
Therefore $\tH$ is a parametrix for $\tG$ modulo $\tau$-trace class operators in $\maK (\maM, \tau)$. Therefore, the index class of $\tG$ can be represented by the two idempotents $e$ and $f$ in $\maM$ such that $e-f\in \maK (\maM, \tau)$  given by
$$
\left(\begin{array}{cc} P_\ep^+ &    0 \\
2 (\tD^-\tD^++\rho^2)^{-1/2} \tD^+P_\ep^+ & - (\tD^+\tD^- +\rho^2)^{-1/2} P_\ep^-  (\tD^+\tD^- +\rho^2)^{1/2} \end{array}\right)
$$
with $f$ as before given by $f=\left(\begin{array}{cc} 0 & 0 \\ 0 & \Id_{L^2(\tM, \tS^-)}\end{array}\right)$. Notice that the operator $P_\ep^-  (\tD^+\tD^- +\widetilde{\rho}^2)^{1/2}$ is bounded and even $\tau$-trace class since it equals
$$
(P_\ep^- \tD^+\tD^- + P_\ep \rho^2) (\tD^+\tD^- +\rho^2)^{-1/2},
$$
and the operator $P_\ep^- \tD^+\tD^-= P_\ep (P_\ep^- \tD^+\tD^-)$ is bounded and even $\tau$-trace class, and finally the operator $(\tD^+\tD^- +\rho^2)^{-1/2}$ is bounded.

If we now apply the trace $\tau$ to this $K$-theory class, then we get
$$
(\tau_{*} \circ \Ind_\maM) (\tG^+) = \tau (P_\ep^+) - \tau ((\tD^+\tD^- +\rho^2)^{-1/2} P_\ep^-  (\tD^+\tD^- +\rho^2)^{1/2})
$$
Since $\tau$ is a hypertrace and again since $P_\ep^-  (\tD^+\tD^- +\widetilde{\rho}^2)^{1/2}$ is $\tau$-trace class while $(\tD^+\tD^- +\widetilde{\rho}^2)^{-1/2}$ is bounded and belongs to $\maM$, we can write
$$
\tau ((\tD^+\tD^- +\rho^2)^{-1/2} P_\ep^-  (\tD^+\tD^- +\rho^2)^{1/2}) = \tau ( P_\ep^-  (\tD^+\tD^- +\rho^2)^{1/2}(\tD^+\tD^- +\rho^2)^{-1/2})= \tau (P_\ep^-).
$$
We conclude that 
$$
(\tau_{*} \circ \Ind_\maM) (\tG^+) = \tau (P_\ep^+) - \tau (P_\ep^-)\text{ for any small enough }\ep>0.
$$
Since the trace $\tau$ is normal, we can deduce from this equality that
$$
(\tau_{*} \circ \Ind_\maM) (\tG^+) = \tau (P_0^+) - \tau (P_0^-) = \Ind_{(2)} (\tD^+).
$$
We conclude from the whole previous discussion that 
$$
 \Ind_{(2)} (\tD^+) = \left[(\tau_{*}\circ \Phi_{\reg, *}) \circ \Ind)\right] (\maD_r^+) = \tau^{\reg}_*(\Ind (\maD_r^*))
$$
The last equality is the consequence of the compatibility of the traces $\tau$ and $\tau^{\reg}$ with the Morita equivalence $\maK(\maE_r)\sim C^*_r\Gamma$, say
$$
(\tau_{*}\circ \Phi_{\reg, *}) =\tau^{\reg}_*\circ M: K_0(\maK(\maM, \tau) \longrightarrow \R,
$$
with $M:K_0(\maK(\maE_r))\to K_0(C^*_r\Gamma)$ being the  isomorphism induced by the Morita equivalence.  Details of the  proof of this latter result can also be found in \cite{BenameurRoy}.

The proof for the trivial representation and the average trace is similar and actually simpler, since the von Neumann algebra $\maM$ has to be replaced by the bounded operators on the Hilbert space $L^2(M, S)$ and the spectrum of $D$ admits a gap around zero, which allows to use a parametric for $D$ which is the bounded Green operator. We leave these straightforward verifications for the interested reader, who can alternatively also  use  the needed compatibility results proved in \cite{BenameurRoy}.
\end{proof}

\section{Relative $L^2$ index theory}

\subsection{The $\Phi$-relative $L^2$ index theorem}

The relative index theorem of Gromov-Lawson \cite{GromovLawson}[Theorem 4.18] can clearly be extended to Galois coverings with the expected statement about pairs of  Galois coverings $\pi: \tM\to M$ and $\pi': \tM'\to M'$ and $\Gamma$-operators $\tD$ and $\tDp$ which are invertible near infinity. For simplicity we restrict ourselves to the case of the spin Dirac operators for complete metrics which have PSC near infinity. More precisely, we assume that $M$ and $M'$ are endowed with complete metrics $g$ and $g'$ which both have uniform  PSC outside compact subspaces $K$ and $K'$ in $M$ and $M'$ respectively, and associated spin structures.  Then the $\Gamma$-invariant Dirac operators $\tD=\tD_g$ and $\tDp=\tDp_{g'}$ acting on  the sections of the hermitian spinor bundles $\tS$ and $\tS'$ both have well defined $L^2$ indices $\Ind_{(2)} (\tD)$ and $\Ind_{(2)} (\tDp)$, thanks to Theorem \ref{Main}.

Moreover, we assume as in \cite{GromovLawson} that there exists  open submanifolds $\Phi\in M\smallsetminus K$ and $\Phi'\subset M'\smallsetminus K'$ which are unions of connected components of $M\smallsetminus K$ and $M'\smallsetminus K'$ respectively, such that all structures over $\Phi$ and $\Phi'$ are identified. So denoting by $\tPhi:=\pi^{-1} (\Phi)$ and $\tPhi':={\pi'}^{-1} (\tPhi')$, we assume that there exists a $\Gamma$-equivariant smooth spin-preserving isometry
$$
\tPhi \rightarrow \tPhi'\text{ which is then covered by a bundle isometry } : S\vert_{\Phi} \rightarrow  S'\vert_{\Phi'},
$$
 inducing a unitary $U: L^2(\tPhi, \tS) \rightarrow L^2(\tPhi', \tS')$ so that the Dirac operators are conjugated over $\tPhi$ and $\tPhi'$, that is as self-adjoint operators:
$$
\tDp\vert_{\Phi'} = U \circ \tD\vert_{\Phi}\circ U^{-1}.
$$

As in \cite{GromovLawson}[page 127], we consider a compact hypersurface $H$ which is contained in $\Phi\simeq \Phi'$ which separates off the infinite part of $\Phi\simeq \Phi'$. More precisely, we assume that  there exists a compact subspace $L$ of $M$ containing $H$  and $K$  such that $\Phi\smallsetminus L$ and $K$ are contained in different connected components of $M\smallsetminus H$. The similar assumption is thus insured as well in $M'$ when we view $H$ in $M'$.  We may assume, up to modifying the metrics and spin structures near $H$, that the metrics are product metrics  and the Dirac operators are of product type,  in a collar neighborhood $(-\ep, +\ep) \times H$ of $H$ in $\Phi\simeq \Phi'$. Like in \cite{GromovLawson}[page 127], we chop off $M$ and $M'$ along $H$ and attach the resulting riemannian manifolds with boundaries $M_1$ and $M'_1$ to build up the new complete riemannian spin manifold $M'':=M_1\amalg_H M_1'$ whose metric has as well PSC near infinity. The spin structure (and  orientation) on $M''$ agrees with that on $M_1$ and with the opposite one on $M'_1$ corresponding to the opposite Stieffel-Whitney $2$-class, and the Dirac operator on $M''$ for the resulting spin structure is denoted $D''$ and agrees with $D$ on $M_1$ and with $D^{op}$ on $M_1'$. 

The Galois $\Gamma$-coverings $\tM\to M$ and $\tM'\to M'$ then yield a Galois $\Gamma$-covering $\tM''\to M''$ in an obvious way. Indeed, $H$ can be lifted to $\tH=\pi^{-1}(H)$ and all the constructions can be achieved upstairs in $\tM$ and $\tM'$ to build up the complete spin covering $\tM''\to M''$.

\begin{definition}
The $\Phi$-relative $L^2$ index of $(M, M', \Phi)$ is the $L^2$-index of the spin-Dirac operator ${\widetilde{D''}}^+$, i.e.
$$
\Ind_{(2)} (\tD, \tDp; \Phi) := \Ind_{(2)} ({\widetilde{D''}}^+) \quad\in \R.
$$ 
\end{definition}

We are now in position to state the main theorem of this paragraph. 
\begin{theorem}[The $\Phi$-relative $L^2$-index theorem]\label{PhiRelative}
Under the previous notations, we have
$$
\Ind_{(2)} (\tD, \tDp; \Phi) = \Ind_{(2)} (\tD^+) - \Ind_{(2)} ({\tDp}^+)
$$
\end{theorem}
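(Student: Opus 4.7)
The strategy is to reduce this $L^2$ statement to its higher $K$-theoretic counterpart, which is exactly the $\Phi$-relative higher index theorem proved by Xie and Yu in $K_0(C^*_r\Gamma)$, and then apply the compatibility Theorem \ref{Compatibility} on the nose. This is the natural ``index-theoretic'' route suggested by the introduction, and it avoids reproving a parametrix calculus directly in the von Neumann setting.

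Concretely, I would proceed as follows. First, I introduce the three Mishchenko--Fomenko twisted Dirac operators $\maD_r$, $\maD_r'$ and $\maD_r''$ associated with $D$, $D'$ and $D''$ respectively (the last one on the cut-and-paste manifold $M''=M_1\amalg_H M_1'$). Since $g$, $g'$ and the glued metric on $M''$ all have PSC (equivalently, $\maR\geq\kappa_0\Id$) off some compact subspaces, the curvature operators of the twisted operators $\maD_r^E$ remain uniformly positive near infinity, so the Xie--Yu construction of the preceding section applies verbatim and produces three well-defined higher index classes
\[
\Ind(\maD_r^+),\;\; \Ind({\maD'_r}^+),\;\; \Ind({\maD''_r}^+) \;\;\in\;\; K_0(C^*_r\Gamma).
\]
Next, the Xie--Yu $\Phi$-relative higher index theorem, applied to the pair of Galois $\Gamma$-coverings $\tM\to M$ and $\tM'\to M'$ together with the $\Gamma$-equivariant isometric identification of $\tPhi$ and $\tPhi'$ (which extends to the Mishchenko bundles since these are pulled back from $B\Gamma$), yields the identity
\[
\Ind({\maD''_r}^+) \;=\; \Ind(\maD_r^+) \;-\; \Ind({\maD'_r}^+) \quad \text{in } K_0(C^*_r\Gamma).
\]
Here it is essential that the cutting and pasting is done $\Gamma$-equivariantly upstairs along $\tH=\pi^{-1}(H)$, and that the identifications are compatible with the flat connections on the Mishchenko bundles; both points are built into the construction of $\tM''$.

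Finally, I apply the regular trace morphism $\tau^{\reg}_*: K_0(C^*_r\Gamma)\to \R$ to the displayed equality. Since $\tau^{\reg}_*$ is a group homomorphism, the right-hand side becomes $\tau^{\reg}_*(\Ind(\maD_r^+)) - \tau^{\reg}_*(\Ind({\maD'_r}^+))$, while the left-hand side is $\tau^{\reg}_*(\Ind({\maD''_r}^+))$. By Theorem \ref{Compatibility} applied to each of the three Galois coverings $\tM\to M$, $\tM'\to M'$ and $\tM''\to M''$ (all of whose Dirac operators satisfy the invertibility-near-infinity condition needed in Theorem \ref{Main}, hence the $L^2$-indices are defined and Theorem \ref{Compatibility} applies), the three numbers $\tau^{\reg}_*(\Ind(\maD_r^+))$, $\tau^{\reg}_*(\Ind({\maD'_r}^+))$ and $\tau^{\reg}_*(\Ind({\maD''_r}^+))$ equal respectively $\Ind_{(2)}(\tD^+)$, $\Ind_{(2)}({\tDp}^+)$ and $\Ind_{(2)}(\widetilde{D''}^+)=\Ind_{(2)}(\tD,\tDp;\Phi)$. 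Rearranging gives the desired formula.

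The main obstacle is, in my view, a bookkeeping one rather than a genuinely analytic one: one needs to check that the Xie--Yu higher $\Phi$-relative index theorem applies to the general class of generalized Dirac operators satisfying the uniform invertibility condition $\maR\geq\kappa_0\Id$ near infinity (and not only to the spin Dirac operator with PSC near infinity as originally stated). The arguments in Xie--Yu are, however, formulated entirely in terms of this invertibility-near-infinity hypothesis, which is preserved under the gluing along $\tH$ because the metric and the operator are of product type in the chosen collar neighborhood of $H$; so all the parametrix computations of Xie--Yu go through unchanged in the generalized setting. Once this is in place, the verification of compatibility of the three identifications (geometric gluing, equivariant lifting, and flatness of the Mishchenko bundle across $\tH$) is routine, and the proof is complete.
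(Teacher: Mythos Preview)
Your proposal is correct and follows essentially the same route as the paper: reduce to the Xie--Yu higher $\Phi$-relative index theorem in $K_0(C^*_r\Gamma)$, then push forward by $\tau^{\reg}_*$ and invoke Theorem~\ref{Compatibility} on each of the three coverings $\tM$, $\tM'$, $\tM''$. The paper's proof is terser (it packages the higher index of $\maD''_r$ as $\Ind(\maD,\maD';\Phi)$ and cites compatibility once more for that term), but the logical content is identical to yours.
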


\begin{proof}
Although this theorem can be proved directly by extending the Gromov-Lawson method, we shall deduce it here from the main theorem of \cite{XieYu}[Theorem 4.2] by using our compatibility Theorem \ref{Compatibility}. Indeed, the construction of the higher $\Phi$-relative index $\Ind (\maD, \maD'; \Phi)\in K_0(C^*_r\Gamma)$ carried out in \cite{XieYu} shows that its image under the group homomorphism induced by the regular trace $\tau^{\reg}$ on $C^*_r\Gamma$ coincides with our $\Phi$-relative $L^2$-index  $\Ind_{(2)} (\tD, \tDp; \Phi)$ since this is a consequence of the compatibility result on Galois coverings again. The main theorem of \cite{XieYu} gives the relation
$$
\Ind (\maD, \maD'; \Phi) = \Ind (\maD^+) - \Ind ({\maD'}^+)
$$
Therefore, using Theorem \ref{Compatibility}, we conclude that
\begin{eqnarray*}
\Ind_{(2)} (\tD, \tDp; \Phi) & = & \tau^{\reg}_* \left(\Ind (\maD, \maD'; \Phi)\right) \\
& = & \tau^{\reg}_* \left(\Ind (\maD^+)\right) - \tau^{\reg}_* \left(\Ind ({\maD'}^+)\right)\\
&  = &\Ind_{(2)} (\tD^+) -  \Ind_{(2)} ({\tDp}^+).
\end{eqnarray*}
\end{proof}

When the open submanifolds $\Phi$ and $\Phi'$ have compact complements in $M$ and $M'$ respectively, the $\Phi$-relative $L^2$ index can easily be sees to be independant of the choices of $\Phi$ and $\Phi'$ and it will be denoted $\Ind_{(2)} (\tD, \tDp)$. Moreover, in this case where the operators are identified near infinity, no need of any invertibility condition near infinity and the relative $L^2$ index can be defined exactly using the process of chopping off and pasting exactly as  in \cite{GromovLawson}[page 119] that is now done with the Galois coverings. We can deduce now the following important consequence.

\medskip

\begin{theorem}[The relative $L^2$-index theorem]\label{relative}
Under the assumptions of Theorem \ref{PhiRelative} with $M\smallsetminus \Phi$ and $M'\smallsetminus \Phi'$ compact subspaces of $M$ and $M'$ respectively, but without any condition on $\maR$ now. The relative $L^2$-index $\Ind_{(2)} (\tD, \tDp)$ is well defined, and we have
$$
\Ind_{(2)} (\tD, \tDp)  = \Ind (D, D').
$$
In particular, in this case $\Ind_{(2)} (\tD, \tDp)\in \Z$. 
\end{theorem}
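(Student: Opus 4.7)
The plan is to reduce the statement to the classical covering index theorem of Atiyah by observing that, under the compactness hypothesis, the cut-and-paste construction actually produces a \emph{closed} manifold, so all the invertibility-near-infinity machinery becomes unnecessary. Concretely, I would start by choosing, as in the set-up of Theorem \ref{PhiRelative}, a separating hypersurface $H\subset \Phi\simeq\Phi'$ with product collar, and define $M_1$ (resp.\ $M_1'$) to be the side of $H$ in $M$ (resp.\ $M'$) containing $K$ (resp.\ $K'$). Since $M\smallsetminus\Phi$ and $M'\smallsetminus\Phi'$ are assumed compact, both $M_1$ and $M_1'$ are now \emph{compact} manifolds with boundary $H$, so the doubled manifold $M'':=M_1\amalg_H M_1'$ is a closed spin manifold and the induced Galois covering $\tM''\to M''$ is cocompact. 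The lifted Dirac operator $\tD''$ is therefore a $\Gamma$-invariant elliptic operator over a closed manifold downstairs; no invertibility condition is needed to guarantee that $\dim_\Gamma\Ker\tD''^{\pm}$ are finite, and one defines
$$
\Ind_{(2)}(\tD,\tDp)\;:=\;\Ind_{(2)}(\tD''^{+})\;=\;\dim_\Gamma\Ker\tD''^{+}-\dim_\Gamma\Ker\tD''^{-}.
$$

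The next step is to verify that this real number is independent of the auxiliary choices: the collar modification near $H$, the hypersurface $H$ itself, and the compact set $L$. On the covering, this is a direct transcription of the classical Gromov-Lawson chopping/pasting invariance \cite{GromovLawson}[page 119]; the two cut-and-paste constructions associated with different choices can be interpolated through a third cocompact $\Gamma$-cover, and the $L^2$-index of a cocompact $\Gamma$-invariant Dirac operator is stable under such interpolation by the standard relative index trick in the von Neumann setting (equivalently, one can argue via Theorem \ref{Compatibility} by noting that the corresponding higher indices in $K_0(C^*_r\Gamma)$ agree by the Xie-Yu $\Phi$-relative theorem and then apply $\tau^{\reg}_{*}$). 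Once well-definedness is in place, the final identification is immediate: applying Atiyah's covering index theorem \cite{AtiyahCovering} to the cocompact cover $\tM''\to M''$ yields
$$
\Ind_{(2)}(\tD''^{+})\;=\;\Ind(D''^{+})\;\in\;\Z,
$$
and by the very definition of the Gromov-Lawson relative index we have $\Ind(D,D')=\Ind(D''^{+})$, so
$$
\Ind_{(2)}(\tD,\tDp)\;=\;\Ind(D,D')\;\in\;\Z.
$$

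The heart of the matter is really the observation that the compactness of the complements turns the $\Phi$-relative construction into one producing a closed base manifold; once this is noted, the finiteness of the $\Gamma$-dimensions is automatic and the theorem collapses to Atiyah's original $L^2$-index theorem. The only genuinely non-trivial point I expect to encounter is the verification of independence of the construction from the choices (hypersurface, collar, $L$); this does not follow formally from Theorem \ref{PhiRelative} since there the invertibility near infinity provided a canonical framework, and here one must argue directly that two different cut-and-paste presentations give the same $L^2$-index on the doubled cover. I would handle this either by the covering version of Gromov-Lawson's excision argument or, more economically, by running both definitions through Theorem \ref{Compatibility} and invoking the corresponding invariance of the Xie-Yu higher index at the $K$-theoretic level before applying the regular trace.
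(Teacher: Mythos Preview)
Your proposal is correct and follows essentially the same argument as the paper: once the complements of $\Phi$ and $\Phi'$ are compact, the glued manifold $M''$ is closed, and the result reduces immediately to Atiyah's classical $L^2$-index theorem for cocompact coverings. The paper's proof is terser than yours (it takes the well-definedness as evident from the Gromov--Lawson construction rather than verifying it explicitly), but the substance is the same.
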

\medskip
Here $\Ind (D, D')$ is the Gromov-Lawson relative index for the Dirac operators downstairs in $M$ and $M'$. Recall as well that 
$$
\Ind (D, D') = \int_U \widehat{A} (TM) - \int_{U'} \widehat{A} (TM'),
$$
where $U$ and $U'$ are open relatively compact neighborhoods of $L$ and $L'$ respectively whose complements correspond under the isometry. This is indeed the content of the classical Gromov-Lawson relative index theorem, see  \cite{GromovLawson}[Theorem 4.18].

\begin{proof}
From their very definitions, the indices $\Ind_{(2)} (\tD, \tDp)$ and  $\Ind (D, D')$ coincide respectively with the $L^2$-index of the $\Gamma$-invariant generalized Dirac operator on $\tM''$ and with the classical index of the generalized Dirac operator on $M''$. But here $M''$ is a closed manifold so that we can apply the classical Atiyah $L^2$-index theorem for Galois coverings of closed manifolds and deduce the allowed equality
$$
\Ind_{(2)} (\tD, \tDp)  = \Ind (D, D').
$$
\end{proof}

Another  direct consequence of the relative $L^2$-index Theorem \ref{PhiRelative} when we don't assume anymore that $M\smallsetminus \Phi$  and $M'\smallsetminus \Phi'$ are compact, is the following convenient result.

\medskip

\begin{theorem}
Under the assumptions of Theorem \ref{PhiRelative}, assume that there exists a Galois $\Gamma$-covering $\pi_N:\tN\to N$ of spin manifolds with boundary such that $N$ is compact, $\pi_N^{-1} (\pa N) = \pa \tN$ and the resulting Galois $\Gamma$-covering $\pa\tN\to \pa N$ is isometric to the Galois $\Gamma$-covering $\tH\to H$.   We  form as above the spin complete manifolds $N_1:=M_1\amalg_H N$ and $N'_1:= M'_1\amalg_H N$ with the resulting Dirac operators $D_1$ and $D'_1$ and their lifts $\tD_1$ and $\tDp_1$ to the covers.  Then
$$
\Ind_{(2)} (\tD, \tDp; \Phi) = \Ind_{(2)} (\tD_1^+) - \Ind_{(2)} ({\tDp_1}^+).
$$
\end{theorem}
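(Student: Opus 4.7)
The plan is to apply Theorem~\ref{PhiRelative} once more, this time to the pair of complete $\Gamma$-coverings $M''$ and $N_1\amalg N_1'^{\,op}$, and to show that the resulting $\Phi$-relative $L^2$-index vanishes.

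First I would verify that all the ingredients make sense. The ends of $N_1$ (respectively $N_1'$) coincide with the non-$\Phi$ ends of $M$ (respectively the non-$\Phi'$ ends of $M'$) inherited through $M_1$ (respectively $M_1'$), and the uniform invertibility condition $\maR\geq \kappa_0\Id$ still holds on these ends. Theorem~\ref{Main} therefore guarantees that $\Ind_{(2)}(\tD_1^+)$ and $\Ind_{(2)}(\tDp_1^+)$ are well defined real numbers, and the same is true of $\Ind_{(2)}(\tD''^+)=\Ind_{(2)}(\tD,\tDp;\Phi)$.

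The crucial geometric observation is that $M''=M_1\amalg_H M_1'^{\,op}$ and $N_1\amalg N_1'^{\,op}=(M_1\cup_H N)\amalg(M_1'^{\,op}\cup_H N^{op})$ share an identical neighborhood of infinity: the non-$\Phi$ ends of $M$ (coming from $M_1$ in both constructions) together with the non-$\Phi'$ ends of $M'^{\,op}$ (coming from $M_1'^{\,op}$ in both constructions). Over this common region $\Psi$ the Galois coverings, spin structures and Dirac operators literally coincide, so Theorem~\ref{PhiRelative} applies. Using the elementary identity $\Ind_{(2)}((\tDp_1^{\,op})^+)=-\Ind_{(2)}(\tDp_1^{+})$ coming from orientation reversal, the theorem yields
\[
\Ind_{(2)}(\tD'',\tD_1\amalg\tDp_1^{\,op};\Psi)
= \Ind_{(2)}(\tD,\tDp;\Phi) - \bigl(\Ind_{(2)}(\tD_1^+)-\Ind_{(2)}(\tDp_1^+)\bigr).
\]

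It then remains to show that the left-hand side is zero. Unraveling the chop-and-paste construction (with auxiliary hypersurfaces $H_\Psi$ in $\Psi$), the resulting closed spin manifold $W$ with Galois $\Gamma$-covering $\tilde W\to W$ has the form
\[
W \;\cong\; N\cup_H D(M_1^{H_\Psi})\cup_H D(M_1'^{\,H_\Psi})\cup_H N^{op},
\]
where each $D(\cdot)$ denotes the double along the corresponding cut hypersurface, realized concretely because two copies of the same piece (one from $M''$, one from $N_1\amalg N_1'^{\,op}$, with opposite orientations) are glued together. Since $W$ is closed, Atiyah's covering $L^2$-index theorem (applied in its classical form, as recalled at the end of Section~3) gives $\Ind_{(2)}(\tilde W^+)=\Ind(W^+)=\int_W\widehat A(TW)$. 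Each doubled piece contributes $0$ thanks to the orientation-reversing involution (the $\widehat A$-integrand is pulled back to itself while the orientation flips), while the contributions from $N$ and $N^{op}$ cancel; hence $\int_W\widehat A=0$ and the $\Psi$-relative $L^2$-index vanishes. Combined with the displayed equation, this gives the theorem.

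The main obstacle will be the careful bookkeeping of orientations, spin structures and the $\Gamma$-action when assembling $W$: in particular, one must verify that the product-type assumptions on the metric near each cutting hypersurface (the same type of assumption made in the setup of Theorem~\ref{PhiRelative}) ensure that the metric, spin structure, and $\Gamma$-cover all extend smoothly across the gluings, so that $\tilde W$ is genuinely a complete spin Galois $\Gamma$-covering to which Atiyah's theorem may be applied.
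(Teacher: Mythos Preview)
Your proof is correct and in fact a bit more careful than the paper's own argument.

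The paper's proof is a two-line sketch: it cuts $N_1$ and $N_1'$ along $H$ and observes that regluing the pieces $M_1$ and $M_1'^{op}$ gives back exactly $M''$, so that ``the relative $L^2$-index theorem'' yields $\Ind_{(2)}(\tD_1^+)-\Ind_{(2)}(\tDp_1^+)=\Ind_{(2)}(\tD''^{+})$ in one stroke. The subtlety is that the only region on which $N_1$ and $N_1'$ agree is the \emph{compact} piece $N$ together with a collar of $H$, which is not a union of connected components of a neighborhood of infinity; so Theorem~\ref{PhiRelative} as stated in this paper does not literally apply to the pair $(N_1,N_1')$. The paper is presumably relying on the full Xie--Yu formulation, where the identification region need only contain the separating hypersurface.

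Your route avoids this issue entirely. By comparing $M''$ with the disjoint union $N_1\amalg N_1'^{\,op}$ you arrange a pair that genuinely agrees on \emph{all} ends (with compact complement on both sides), so Theorem~\ref{PhiRelative} applies verbatim. The price is the extra analysis of the closed manifold $W$; your decomposition of $W$ into $N$, $N^{op}$ and two doubled compact pieces is correct, and the vanishing of $\int_W\widehat{A}$ together with Atiyah's covering theorem finishes the argument cleanly. So your approach can be read as a rigorous unpacking of the paper's shortcut, trading one (slightly generalized) application of the relative theorem for one honest application plus an elementary characteristic-class cancellation.
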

\medskip

\begin{proof}
Indeed, the relative $L^2$-index theorem applies to prove that the RHS coincides with the $L^2$-index of the Dirac operator on the smooth complete manifold obtained by chopping off $N_1$ and $N_2$ along $H$ and attaching $M_1$ to $M'_1$. But this latter complete spin manifold is exactly the manifold $N$ used above to define $\Ind_{(2)} (\tD, \tDp; \Phi)$.  Since all these construction can be done upstairs $\Gamma$-equivariantly, we immediately deduce from Theorem \ref{relative} the allowed equality.
\end{proof}

\begin{remark}
The $\Phi$-relative $L^2$ index Theorem \ref{PhiRelative}, as well as all its corollaries listed above, can be proved directly without using the higher version obtained in \cite{XieYu}, by adapting the proof given in \cite{GromovLawson} to the type II von Neumann setting. 
\end{remark}

\medskip

\subsection{The torsion-free case}

In this second paragraph of applications, we shall show that the {\em absolute} Gromov-Lawson index, in opposition to the relative one, already coincides with its $L^2$-version when the group $\Gamma$ satisfies some extra conditions. Recall the famous Baum-Connes map and Baum-Connes conjecture for the group $\Gamma$ \cite{BaumConnes}. In the torsion-free case, the conjecture concerns the bijectivity of the assembly map
$$
\mu_\Gamma: K_*(B\Gamma) \longrightarrow K_*(C^*_r\Gamma).
$$
This map $\mu_\Gamma$ always  factors through the $K$-theory of the maximal $C^*$-algebra. For $K$-amenable groups for instance, $K_*(C^*_r\Gamma) \simeq K_*(C^*_m\Gamma)$ and the Baum-Connes conjecture is equivalent to the so-called maximal Baum-Connes conjecture. We shall mainly be interested in the present paper in the maximal Baum-Connes map and maximal Baum-Connes conjecture. This conjecture is false for some infinite property $(T)$ groups, it is  known to be true  for all torsion free discrete subgroups of $\SO (N, 1)$ or $\SU (N, 1)$ and also for all torsion free a-T-amenable groups which are $K$-amenable and for which the Baum-Connes conjecture was proved by Higson and Kasparov in \cite{HigsonKasparov}. As an example, the  maximal  Baum-Connes map is thus an isomorphism for all torsion free  discrete amenable groups. In the sequel, only the even dimensional case will be needed and all the statements are relative to the even maximal Baum-Connes map for torsion free groups:
$$
\mu_{\Gamma, \max}: K_0(B\Gamma) \longrightarrow K_0(C^*_m\Gamma).
$$

\medskip

\begin{theorem}\label{Absolute}
Let $M$ be a complete even-dimensional riemannian manifold and $D$ a generalized Dirac operator as before which is invertible near infinity. Assume that $\Gamma$ is torsion free and  the maximal Baum-Connes map  is  rationally onto. Then the following Atiyah-type formula holds for the Gromov-Lawson indices:
$$
\Ind_{(2)} (\tD^+) = \Ind (D^+).
$$
In particular, $\Ind_{(2)} (\tD^+)$ is an integer in this case.
\end{theorem}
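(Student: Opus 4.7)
The plan is to combine the Compatibility Theorem~\ref{Compatibility} with the rational surjectivity of $\mu_{\Gamma,\max}$ and with Atiyah's classical $L^2$-covering index theorem applied to suitable closed manifolds. Let $\iota : C^*_m\Gamma \to C^*_r\Gamma$ denote the canonical morphism, so that $\Ind(\maD_r^+) = \iota_*(\Ind(\maD_m^+))$. By Theorem~\ref{Compatibility}, $\Ind(D^+) = \tau^{\av}_*(\Ind(\maD_m^+))$ and $\Ind_{(2)}(\tD^+) = \tau^{\reg}_*(\iota_*(\Ind(\maD_m^+)))$, so the conclusion is equivalent to the vanishing on $\Ind(\maD_m^+)$ of the real-valued group homomorphism
$$
\phi := \tau^{\reg}_*\circ \iota_* - \tau^{\av}_* : K_0(C^*_m\Gamma) \longrightarrow \R.
$$

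Next I would verify that $\phi$ vanishes identically on the image of the maximal Baum--Connes map $\mu_{\Gamma,\max}:K_0(B\Gamma)\to K_0(C^*_m\Gamma)$. Any class $\alpha\in K_0(B\Gamma)$ is represented, via the Baum--Douglas geometric description, by a closed even-dimensional spin$^c$ manifold $Y$ equipped with a classifying map $f:Y\to B\Gamma$ and a twisting bundle, hence by a generalized Dirac operator $D_Y$ on $Y$. The pullback of the universal cover gives a Galois $\Gamma$-covering $\tY\to Y$, and the Mischenko--Fomenko lifts $\maD_{Y,m}$ and $\maD_{Y,r}$ realize $\mu_{\Gamma,\max}(\alpha)$ and its image under $\iota_*$. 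Since $Y$ is compact, the invertibility-near-infinity hypothesis is vacuous, so Theorem~\ref{Compatibility} applies to $Y$ and gives
$$
\phi(\mu_{\Gamma,\max}(\alpha)) = \Ind_{(2)}(\tD_Y^+) - \Ind(D_Y^+).
$$
This difference is zero by Atiyah's $L^2$-covering index theorem \cite{AtiyahCovering}, and hence $\phi\circ \mu_{\Gamma,\max} = 0$.

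Finally I would exploit the rational surjectivity hypothesis: there exists a nonzero integer $n$ and a class $\alpha\in K_0(B\Gamma)$ with $n\cdot \Ind(\maD_m^+) = \mu_{\Gamma,\max}(\alpha)$ in $K_0(C^*_m\Gamma)$. Applying $\phi$ and using the previous step, $n\,\phi(\Ind(\maD_m^+)) = \phi(\mu_{\Gamma,\max}(\alpha)) = 0$, so dividing in $\R$ yields $\phi(\Ind(\maD_m^+)) = 0$, i.e.\ $\Ind_{(2)}(\tD^+) = \Ind(D^+)$. Integrality then follows since $\Ind(D^+)\in\Z$. The step I expect to be the most delicate is the second one: one must confirm that the construction of the higher index in Section~4, which is tailored to the non-compact setting and uses the cutoff $\rho$, specializes on a compact $Y$ (where one may take $\rho\equiv 0$) to the usual Mischenko--Fomenko index class, and that this class coincides with $\mu_{\Gamma,\max}(\alpha)$. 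Granting the geometric cycle description of $K_0(B\Gamma)$, this identification is standard but must be spelled out.
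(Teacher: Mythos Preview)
Your proof is correct and follows essentially the same route as the paper's: both reduce to showing that the linear form $\tau^{\reg}_*\circ\iota_* - \tau^{\av}_*$ on $K_0(C^*_m\Gamma)$ vanishes on the image of the maximal Baum--Connes map, use the Baum--Douglas geometric-cycle description of $K_0(B\Gamma)$ (closed $K$-oriented manifolds with twisting bundle and classifying map), and then invoke Atiyah's classical covering index theorem on these closed manifolds together with rational surjectivity. Your packaging via the single homomorphism $\phi$ is slightly tidier, but the substance is identical.
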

\medskip

\begin{proof}
Since $\Gamma$ is torsion free, the rational maximal Baum-Connes map can be described as $\mu_\Gamma\otimes \Q: K^{\geo}_0(B\Gamma)\otimes \Q \rightarrow K_0(C^*_m\Gamma)\otimes \Q$, where $K_0^{\geo} (B\Gamma)$ is the geometric Baum-Douglas $K$-homology group. The higher  index class of the maximal completion of the generalized Dirac operator $\maD_m$ lives in $K_0(C^*_m\Gamma)$ and hence there exist classes $[Z_i, E_i, f_i]$ in the geometric $K$-homology group $K_0^{\geo} (B\Gamma)$ and rational numbers $q_i$ such that $\sum_i \mu_\Gamma [Z_i, E_i, f_i]\otimes q_i = \Ind (\maD_m)\otimes 1_\Q$. Recall that each $Z_i$ is a smooth closed $K$-oriented even dimensional manifold with a complex vector bundle $E_i$ over $Z_i$, and that $f_i:Z_i\to B\Gamma$ is a continuous map which classifies some smooth $\Gamma$-covering $\tZ_i\to Z_i$, and $[Z_i, E_i, f_i]$ is the class of the triple $(Z_i, E_i, f_i)$ with respect to some equivalence relations, see \cite{BaumDouglas}. Moreover, by definition, the map $\mu_\Gamma$ assigns to any triple $(Z, E, f)$ as above the higher index class in $K_0(C^*_m\Gamma)$ of the operator $\maD_m(Z, E)$ which is the maximal Dirac operator for the $K$-orientation twisted by some hermitian connection on $E$ and by the flat connection on the Michscheko bundle associated with the Galois covering given by $f$. Applying the regular and average traces to $\Ind (\maD_m)$ then gives
$$
\tau^{\reg}_* \Ind (\maD_m) = \sum_i q_i \tau^{\reg}_*\Ind (\maD_m (Z_i, E_i)) \text{ and }\tau^{\av}_* \Ind (\maD_m) = \sum_i q_i \tau^{\av}_*\Ind (\maD_m (Z_i, E_i)).
$$
We may then apply the classical Atiyah covering index theorem for each operator $\maD_m(Z_i, E_i)$ on the closed $K$-oriented manifold $Z_i$ and deduce the equality
$$
 \tau^{\reg}_*\Ind (\maD_m (Z_i, E_i)) = \tau^{\av}_*\Ind (\maD_m (Z_i, E_i), \quad \forall i.
$$
The conclusion follows.
\end{proof}

It seems to us that the following is an interesting problem.
\medskip

\begin{problem}
Assume that $\Gamma$ is torsion free.  Then is is true that for any Galois $\Gamma$-cover of complete even-dimensional riemannian manifolds $\tM\to M$ and any generalized Dirac operator $D$ which is invertible near infinity, one has: 
$$
\Ind_{(2)} (\tD^+) = \Ind (D^+) ?
$$ 
\end{problem}

\medskip

An affirmative answer implies    the weaker statement  that for torsion free groups one would have $\Ind_{(2)} (\tD)\in \Z$.
We end this paragraph with the corresponding statement for compact manifolds with boundary.

\medskip

\begin{theorem}\label{AtiyahAPS}
Let $\Gamma$ be a torsion free countable group such that the maximal Baum-Connes map for $\Gamma$ is rationally onto. Let $\pi_X: \tX\to X$ be any Galois $\Gamma$-covering of compact manifolds with boundaries.  We assume that $D_X$ is a generalized Dirac operator on $X$  which is a ''product operator''  in a neighborhood $(-\ep, 0]\times Y$ of the boundary $Y$  given as in \cite{APS1} by $\sigma(\pa/\pa t + D_Y)$, and  such that the generalized Dirac operator $D_Y$ on $Y$ is $L^2$ invertible. Then the $L^2$-index of the generalized Dirac operator $\tD$ with the global APS $\Gamma$-invariant boundary condition coincides with the integer index of the Dirac operator $D$ downstairs with the classical global APS boundary condition.
\end{theorem}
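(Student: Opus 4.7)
The plan is to reduce the compact-with-boundary APS setting to the complete cylindrical-end setting, and then to invoke Theorem \ref{Absolute}. Attach a half-infinite cylinder along $Y=\pa X$ to form the complete Riemannian manifold $X_\infty := X \cup_Y \bigl([0,+\infty)\times Y\bigr)$, equipped with the obvious product metric and spin/Clifford data extended from the collar $(-\ep,0]\times Y$. The generalized Dirac operator extends to $D_\infty$ on $X_\infty$, which on the cylindrical end coincides with the product operator $\sigma(\pa/\pa t + D_Y)$. The Galois $\Gamma$-covering $\pi_X:\tX\to X$ lifts canonically to a Galois $\Gamma$-covering $\tX_\infty\to X_\infty$ whose cylindrical end is $[0,+\infty)\times \tY$, and the $\Gamma$-invariant operator $\tD_\infty$ there reads $\sigma(\pa/\pa t + \tD_Y)$.

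The next step is the standard APS reduction, available both classically and in the $\Gamma$-covering context. Since $D_Y$ is $L^2$-invertible there is a spectral gap $\delta>0$ for $\tD_Y$ around $0$ and a direct separation-of-variables computation on the cylinder gives $\tD_\infty^2 = -\pa_t^2 + \tD_Y^2 \geq \delta^2\,\Id$ off the compact subspace $\tX\subset \tX_\infty$. The classical APS theorem on $X$ and its $L^2$-covering analogue identify
\[
\Ind(D,\text{APS}) \;=\; \Ind(D_\infty^+) \quad\text{and}\quad \Ind_{(2)}(\tD,\text{APS}) \;=\; \Ind_{(2)}(\tD_\infty^+),
\]
where the indices on the right are the Fredholm, respectively $L^2$-Fredholm, indices of the extended operators on the complete manifolds. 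The operator inequality above furnishes the ``invertibility near infinity'' in the sense used throughout Sections 3--5 of the paper: the proofs of Theorem \ref{Main}, the parametrix construction of Xie--Yu, and the compatibility Theorem \ref{Compatibility} only rely on $\tD^2\geq \kappa_0\,\Id$ off a compact set (the pointwise bound $\maR\geq \kappa_0\,\Id$ is used solely to produce this operator inequality via the Lichnerowicz formula). In particular, the higher maximal index class $\Ind(\maD_{m,\infty})\in K_0(C^*_m\Gamma)$ is defined and maps to $\Ind(\maD_{r,\infty})\in K_0(C^*_r\Gamma)$.

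Theorem \ref{Absolute} then applies to $(X_\infty,D_\infty)$: since $\Gamma$ is torsion free and $\mu_{\Gamma,\max}$ is rationally surjective, the geometric Baum--Douglas decomposition of $\Ind(\maD_{m,\infty})\otimes 1_\Q$ together with the classical Atiyah covering index theorem on each closed representative yields $\tau^{\reg}_{*}\Ind(\maD_{r,\infty}) = \tau^{\av}_{*}\Ind(\maD_{m,\infty})$. Via Theorem \ref{Compatibility} this rewrites as $\Ind_{(2)}(\tD_\infty^+) = \Ind(D_\infty^+)$, and combining with the APS reduction of the second step gives the announced equality of the $L^2$-APS index of $\tD$ with the integer APS index of $D$.

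The main obstacle is the verification in the second and third steps that the slightly more flexible invertibility condition $\tD_\infty^2 \geq \delta^2\,\Id$ at infinity (arising naturally on cylindrical ends where $D_Y$ is merely $L^2$-invertible) is strong enough to run the finite-$\tau$-trace argument of Theorem \ref{Main} and the Mishchenko--Fomenko higher index construction of Xie--Yu; once this mild enhancement is recorded, the cylindrical APS reduction and Theorem \ref{Absolute} combine in a routine way.
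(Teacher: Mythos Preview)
Your strategy is exactly the paper's: complete $X$ by a half-cylinder to form a complete manifold $M$, identify both the classical APS index and the $L^2$-APS index with the corresponding indices on the completed manifolds, and then invoke Theorem \ref{Absolute}. The paper cites \cite{APS1}[Proposition 3.11] for the classical identification and \cite{Vaillant} (see also \cite{Antonini}) for the $L^2$ identification $\Ind^{APS}_{(2)}(\tD)=\Ind_{(2)}(\widetilde{D_M})$.

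Where your argument goes wrong is the claim in the second paragraph that ``since $D_Y$ is $L^2$-invertible there is a spectral gap $\delta>0$ for $\tD_Y$'', from which you deduce $\tD_\infty^2\geq\delta^2\,\Id$ on the cylindrical end of the cover. This implication is false: invertibility of $D_Y$ on the \emph{compact} boundary $Y$ does not in general force a spectral gap for the lifted operator $\tD_Y$ on the non-compact cover $\tY$. A standard example is the Dirac operator on $S^1$ with the bounding spin structure, which has spectrum $\Z+\tfrac12$ and is therefore invertible, while its lift to the universal cover $\R$ has spectrum all of $\R$. Consequently the operator lower bound you rely on is simply unavailable on $\tX_\infty$, and your final ``main obstacle'' paragraph misdiagnoses the difficulty: the problem is not whether an operator bound $\tD_\infty^2\geq\delta^2\,\Id$ at infinity can substitute for the pointwise hypothesis $\maR\geq\kappa_0\,\Id$ in Theorems \ref{Main} and \ref{Compatibility}, but that no such operator bound has been established on the cover in the first place.

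The paper does not attempt to manufacture such a bound. It treats the well-definedness of $\Ind_{(2)}(\widetilde{D_M})$ and its equality with the $L^2$-APS index as inputs from the cylindrical $\Gamma$-covering analysis of \cite{Vaillant} and \cite{Antonini}, and then applies Theorem \ref{Absolute} exactly as you do in your third step.
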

\medskip
So we assume in particular that $\pi_X$ restricts to a Galois $\Gamma$-covering of the boundaries $\pi_Y:\tY\to Y$, where $Y=\pa X$ and $\tY=\pa \tX$.
Recall from \cite{APS1} that the generalized Dirac operator $D$ with the global APS boundary condition has a well defined Fredholm index $\Ind^{APS} (D)\in \Z$. This result was generalized in \cite{Ramachandran} where the same result holds upstairs on $\tX$ for the $\Gamma$-invariant generalized Dirac operator $\tD$ subject to the $\Gamma$-invariant  boundary condition, which then has a well defined $L^2$ index $\Ind^{APS}_{(2)} (\tD)\in \R$. Therefore, under our assumptions on $\Gamma$, the statement of the theorem is that 
$$
\Ind^{APS}_{(2)} (\tD) = \Ind^{APS} (D).
$$

\begin{proof}
As in \cite{APS1}, we consider the smooth complete manifold $M$ obtained by attaching the semi-cylinder $Y\times [0, +\infty)$ to $X$ along its boundary. Then the same construction upstairs gives a smooth complete manifold $\tM$ and a Galois $\Gamma$-covering $\pi:\tM\to M$ which extends the Galois $\Gamma$-covering $\pi_X:\tX\to X$. The operator $D$ then ''extends'' to a generalized Dirac operator $D_M$ which is invertible near infinity in $M$, namely outside $X\subset M$. By \cite{APS1}[Proposition  3.11] and since the kernel of the Dirac operator on $Y$ is trivial in our case, we deduce that the APS-index $\Ind^{APS} (D)$ coincides with the Gromov-Lawson index of the operator $D_M$ on $M$. The $L^2$ index of the $\Gamma$-invariant operator $\widetilde{D_M}$  on $\tM$ is also well defined thanks to our Theorem \ref{Main}. Moreover, although more involved, it is still true that we have the equality
$$
\Ind^{APS}_{(2)} (\tD) = \Ind_{(2)} (\widetilde{D_M}),
$$
this is the main result proved in \cite{Vaillant}, see also \cite{Antonini}. Therefore, applying our Theorem \ref{Absolute}, we can conclude that
$$
\Ind^{APS}_{(2)} (\tD) = \Ind_{(2)} (\widetilde{D_M}) = \Ind (D_M) = \Ind^{APS} (D).
$$
\end{proof}

By using the APS index formula \cite{APS1} together with its $L^2$ version as proved in \cite{Ramachandran}, the previous theorem implies that the Cheeger-Gromov rho invariant of the generalized Dirac operator $\tD$ on any Galois covering $\tY\to Y$ vanishes when $\Gamma$ is torsion free with rationally onto full maximal Baum-Connes map. This result was actually proved  for general spin $\Gamma$ coverings $\tY\to Y$  with PSC and torsion free $\Gamma$ (not necessarily with some multiple which bounds a compact spin manifold) but under the stronger assumption that $\Gamma$ satisfies the maximal Baum-Connes conjecture, see \cite{PiazzaSchick} and also \cite{Keswani}. 
According to our results, it is easy to deduce that this full result (for torsion free groups)  should hold for all generalized Dirac operators $D$ such that $\maR >0$.

\medskip

\subsection{Some $L^2$ Gromov-Lawson PSC-invariants}\label{APSinvariants}

For any compact spin manifold $X$ with boundary $Y$  and for any $g\in \maR^+(Y)$ and any metric $G$ on $X$ which coincides with $g+dt^2$ on a collar neighborhood of $Y$ in $X$, we may extend $G$ to a complete metric $\widehat{G}$ on the smooth manifold $\widehat{X}$ obtained out of $X$ by attaching a semi-cylinder $(Y\times [0, +\infty), g+dt^2)$ at the boundary, see \cite{APS1}.  Then set as in \cite{GromovLawson}
$$
i(X, g):=\Ind (\widehat{D_{X}}^+),
$$
where $\widehat{D_{X}}$ is the associated Dirac operator on $\widehat{X}$. See \cite{GromovLawson} where it was proved that $\Ind (\widehat{D_{X}}^+)$ only depends on $X$ and $g$, this explains the notation. As explained in the end of the previous paragraph, it was proved in \cite{APS1}[Proposition 3.11] that  the invariant $i(X, g)$ coincides with the APS-index $\Ind^{APS} (D_X, B_Y)$ of  the Dirac operator $D^+_{X}$ on $X$ acting on $L^2$-spinors with the global APS boundary condition $B_Y$.  

In the same way, if $\tX\to X$ is a Galois $\Gamma$-cover of manifolds with boundaries $\tY$ and $Y$ so that we also have the Galois cover $\tY\to Y$. The Gromov-Lawson construction can then be done $\Gamma$-equivariantly and we obtain the invariant
$$
i_{(2)} (\tX, g) := \Ind_{(2)} \left(\widehat{D_{\tX}}^+\right),
$$
where $\widehat{D_{\tX}}$ is the $\Gamma$-invariant Dirac operator defined similarly upstairs and $\Ind_{(2)} \left(\widehat{D_{\tX}}^+\right)$ is its $L^2$ index obtained using our construction. Indeed, we have
\begin{lemma}
The index $\Ind_{(2)} (\widehat{D_{\tX}})$ does not depend on the choice of the metric $G$ on $X$.
\end{lemma}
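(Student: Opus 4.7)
The plan is to reduce the independence statement to the classical Gromov--Lawson relative index theorem by way of Theorem~\ref{PhiRelative}, and then to kill the downstairs relative index by a Chern--Weil transgression argument. First I would fix two such metrics $G_0, G_1$ on $X$, both agreeing with $g+dt^2$ on a common collar of $\pa X$, and write $\widehat{G_0}, \widehat{G_1}$ for their cylindrical extensions to $\widehat{X}$, with downstairs spin Dirac operators $D_0, D_1$ and upstairs $\Gamma$-invariant lifts $\widehat{D_{\tX,0}}, \widehat{D_{\tX,1}}$ on $\widehat{\tX}$. By construction, $\widehat{G_0}$ and $\widehat{G_1}$ coincide identically on the cylindrical end $Y\times[0,+\infty)$, whose complement $X$ is compact in $\widehat{X}$; moreover this end has uniform PSC since $Y$ is compact with $g\in \maR^+(Y)$, so each individual $L^2$ index is defined by Theorem~\ref{Main}. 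The hypotheses of both Theorem~\ref{PhiRelative} and Theorem~\ref{relative} are thus met, and we obtain
$$
\Ind_{(2)}(\widehat{D_{\tX,0}}^+)-\Ind_{(2)}(\widehat{D_{\tX,1}}^+) \;=\; \Ind_{(2)}(\widehat{D_{\tX,0}},\widehat{D_{\tX,1}}) \;=\; \Ind(D_0,D_1).
$$
It therefore suffices to show that the downstairs Gromov--Lawson relative index $\Ind(D_0, D_1)$ vanishes.

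Next, I would unpack the definition of $\Ind(D_0, D_1)$. Cutting $\widehat{X}$ at $H=Y\times\{t_0\}$ for some $t_0>0$ and pasting two copies of the compact piece $\widehat{X}_{\leq t_0}$ along $H$ with opposite orientations yields a closed spin manifold $M''$, and the Atiyah--Singer index theorem gives
$$
\Ind(D_0,D_1) \;=\; \int_{M''}\widehat{A}(TM'') \;=\; \int_{\widehat{X}_{\leq t_0}}\widehat{A}(T\widehat{X},\widehat{G_0}) \;-\; \int_{\widehat{X}_{\leq t_0}}\widehat{A}(T\widehat{X},\widehat{G_1}).
$$
Since the two metrics coincide on the cylinder $Y\times[0,t_0]$, the integrals over this cylinder agree and cancel in the difference, and we are reduced to showing that
$$
\int_X \widehat{A}(TX,G_0) \;=\; \int_X \widehat{A}(TX,G_1).
$$

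Finally, I would invoke Chern--Weil transgression. Take the convex interpolation $G_s:=(1-s)G_0+sG_1$ for $s\in[0,1]$; this is a smooth family of Riemannian metrics on $X$ which all equal $g+dt^2$ on the common collar, so $\dot G_s = G_1 - G_0$ vanishes identically there. The associated transgression form $\eta$ built from this path satisfies $\widehat{A}(TX,G_0)-\widehat{A}(TX,G_1)=d\eta$ and is polynomial in the curvatures $\Omega_{G_s}$ and in the variation $\dot G_s$. Because $\dot G_s\equiv 0$ on a neighborhood of $\pa X$, the form $\eta$ vanishes on that neighborhood, and Stokes' theorem yields $\int_X d\eta = \int_{\pa X}\eta = 0$. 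Hence $\Ind(D_0,D_1)=0$, and $\Ind_{(2)}(\widehat{D_{\tX}}^+)$ is independent of $G$.

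The only point requiring a bit of care is the vanishing of the transgression form $\eta$ in a full open neighborhood of $\pa X$, not merely along the boundary; this is automatic for the convex combination since the two endpoint metrics already agree throughout the entire collar. Every other step is a direct assembly of the relative $L^2$-index results established earlier in the paper.
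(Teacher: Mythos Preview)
Your proof is correct and follows essentially the same route as the paper: both reduce the difference of $L^2$ indices to the downstairs Gromov--Lawson relative index via Theorems~\ref{PhiRelative} and~\ref{relative}, and then show that $\Ind(D_0,D_1)=\int_X\widehat{A}(G_0)-\int_X\widehat{A}(G_1)$ vanishes. The only difference is in this last step. The paper argues that this difference of integrals equals $\int_{2X}\widehat{A}(T(2X))$ for a suitable metric on the double $2X$, and then invokes the metric-independence of the $\widehat{A}$-genus on a closed manifold (so one may take the metric symmetric under the orientation-reversing involution, forcing the integral to vanish). You instead use a Chern--Weil transgression along the straight-line path $G_s$, noting that the transgression form vanishes on the collar where $G_0=G_1$, so Stokes applies cleanly. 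Both arguments are standard and equally valid; yours is arguably more self-contained since it avoids appealing to the closed-manifold Atiyah--Singer theorem a second time.
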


\begin{proof}
Let us choose another metric $G'$ which coincides as well with $g+dt^2$  in a collar neighborhood of the bounday $Y$ through the diffeomorphic identification with $(-\ep, 0]\times Y$. The two extended metrics and spin structures on the augmented manifold $\widehat{X}$, then coincide outside a compact subspace ($X$ itself here), so that the identity map allows to apply our $L^2$ relative index Theorem \ref{relative} and deduce the formula
$$
\Ind_{(2)} \left(\widehat{D_{\tX, G}}^+\right) -\Ind_{(2)} \left(\widehat{D_{\tX, G'}}^+\right) = \int_X \widehat{A} (G) - \int_X \widehat{A} (G').
$$
The right hand side vanishes since it coincides with the index of the resulting generalized Dirac operator  on the double manifold $2X$. Notice that the metrics $G$ and $G'$ fit together to give a specific metric on the double, but the integral over $2X$ of the Atiyah-Singer form does not depend on the metric. 
\end{proof}

Following \cite{GromovLawson} again, we can introduce an $L^2$-invariant associated with any two metrics of PSC on a smooth closed odd dimensional spin manifold $N$, whose spin cobordism class is not necessarily torsion. More precisely, assume that $g_0$ and $g_1$ are two metrics with PSC on $N$ and consider the smooth  manifold $M=N\times \R$ endowed with any smooth  metric $G$ such that   $G=g_0+dt^2$ for $t\leq 0$ and $G= g_1+dt^2$ for $t\geq 1$. Then obviously the riemannian manifold $(M, G)$ is a spin manifold whose metric has PSC outside $N\times (0, 1)$ and hence near infinity. The invariant $i (N; g_0, g_1):=\Ind (D^+_{M, G})$ does not depend on the choice of metric $G$ for $0<t<1$. It was used  in \cite{GromovLawson} to deduce the existence of infinitely many noncordant metrics of PSC on the sphere $\S^7$. 

In the case where $N$ is not simply connected, one can still define the Gromov-Lawson index $i(N; g_0, g_1)$ but might use all normal covers $\tN\to N$ associated with normal subgroups of the fundamental group $\pi_1(N)$  and the corresponding Galois covers 
$$
\tM=\tN\times \R \longrightarrow M=N\times \R
$$ 
as we did before. More precisely, we introduce for any such Galois covering the invariant 
$$
i_{(2)} (\tN; g_0, g_1) := \Ind_{(2)} (D^+_{\tM, G}).
$$
Notice that if we can find such metric $G$ which has PSC on $N\times [0, 1]$ then for any such Galois cover, the $\Gamma$-invariant Dirac operator on $\tM$ will be $L^2$ invertible, and hence will have trivial $L^2$ index, i.e. in this case $i_{(2)} (\tN; g_0, g_1)=0$. This shows that  $i_{(2)} (\tN; g_0, g_1)$ is trivial when $g_0$ and $g_1$ belong to the same PSC concordance class, in particular when they are homotopic inside PSC metrics on $N$. \\
Recall that the space of metrics of PSC on $N$ is denoted by $\maR^+(N)$.
\medskip

\begin{proposition}\label{Properties}\
\begin{enumerate}
\item The invariant $i_{(2)} (\tN; g_0, g_1)$ does not depend on the metric $G$ on $N\times (0, 1)$.
\item One has $i_{(2)} (\tN; g_0, g_1) + i_{(2)} (\tN; g_1, g_2) + i_{(2)} (\tN; g_2, g_0) =0$ for any $g_i\in \maR^+(N)$. 
\item For any compact spin Galois cover $\tX\to X$ of manifolds with boundaries such that the boundary is the Galois cover $\tY\to Y$ as previously defined, we have for any $g_0, g_1\in \maR^+(Y)$:
$$
i_{(2)} (\tX, g_1) - i_{(2)} (\tX, g_0) = i_{(2)} (\tY, g_0, g_1).
$$ 
Hence, $i_{(2)} (\tX, g)$  only  depends on the concordance class of $g$ in $\maR^+(Y)$.
\end{enumerate}
\end{proposition}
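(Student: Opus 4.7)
The plan is to deduce all three items from the relative and $\Phi$-relative $L^2$ index theorems \ref{relative} and \ref{PhiRelative} proved above, together with the independence-of-extension lemma stated just before the proposition.

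For (1), the two chosen metrics $G$ and $G'$ on $N\times\R$ agree outside the compact set $N\times[0,1]$, so Theorem \ref{relative} reduces the difference $\Ind_{(2)}(\tD_G) - \Ind_{(2)}(\tD_{G'})$ to the Gromov-Lawson relative index downstairs, itself equal to the $\widehat A$-genus of the closed doubled manifold obtained by pasting two copies of a compact neighborhood of $N\times[0,1]$ along their isometric boundaries. That doubled manifold is diffeomorphic to $N\times S^1$, whose $\widehat A$-genus vanishes because $N$ is odd-dimensional while $\widehat A(S^1)=1$ has no positive-degree contribution.

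For (2), I first establish the additive identity $i_{(2)}(\tN;g_0,g_2) = i_{(2)}(\tN;g_0,g_1) + i_{(2)}(\tN;g_1,g_2)$ and derive the cocycle from it. Apply Theorem \ref{PhiRelative} to $M = M_{01}\sqcup M_{12}$ (the standard $g_0\leadsto g_1$ and $g_1\leadsto g_2$ concordance cylinders) and $M' = (N\times\R, G_b)$, where $G_b$ threads through $g_0\leadsto g_1\leadsto g_2$ via two concordances separated by a pure $g_1+dt^2$ region; by (1), $\Ind_{(2)}(\tD_{M'}) = i_{(2)}(\tN;g_0,g_2)$. Take $\Phi$ the $g_0$- and $g_2$-ends of $M$ and $\Phi'$ the two ends of $M'$, identified in the obvious way. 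The chop-and-paste manifold $M''$ inherits the surviving $g_1$-ends of $M_{01}$ and $M_{12}$, and a quick check shows it is a complete metric on $N\times\R$ with $g_1+dt^2$ at both infinities; by (1) its $L^2$ index equals $i_{(2)}(\tN;g_1,g_1)$, which vanishes because one admissible representative is the global product $g_1+dt^2$, forcing trivial $\Gamma$-kernel by Lichnerowicz. Specializing the resulting additivity to $g_2=g_0$ gives antisymmetry, and the cocycle follows.

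For (3), apply Theorem \ref{PhiRelative} to $M=\hat X_1$ and $M' = \hat X_0 \sqcup M_{01}$, with $\Phi, \Phi'$ the two $g_1$-ends. Then $\Ind_{(2)}(\tD_M) = i_{(2)}(\tX,g_1)$ and $\Ind_{(2)}(\tD_{M'}) = i_{(2)}(\tX,g_0) + i_{(2)}(\tY;g_0,g_1)$. The chop-and-paste manifold $M''$ splits as the untouched $\hat X_0 \subset M'$ (which enters with reversed orientation, this being the source of the difference of indices in the $\Phi$-relative formula, so it contributes $-i_{(2)}(\tX,g_0)$) together with the complete manifold obtained by gluing the chopped $\hat X_1$-piece to the reversed chopped $M_{01}$-piece along the common $(Y,g_1)$-hypersurface. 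Reinterpreting this second piece by enlarging the core $X$ along the concordance region (the enlarged core is diffeomorphic to $X$ and its new boundary carries $g_0$) makes it a standard augmentation of $X$ for $g_0$, so by the independence-of-extension lemma its $L^2$ index is also $i_{(2)}(\tX,g_0)$. The two contributions cancel, $\Ind_{(2)}(\tD_{M''})=0$, and the theorem yields the stated identity. Concordance invariance follows since, for $g_0, g_1$ PSC-concordant, the concordance on $Y\times[0,1]$ may be taken with PSC throughout, making $M_{01}$ globally PSC and forcing $i_{(2)}(\tY;g_0,g_1)=0$. The main obstacle throughout is exactly the orientation bookkeeping and the enlargement argument just sketched for the exotic piece in (3); once these are settled, the rest of the argument assembles routinely from \ref{relative} and \ref{PhiRelative}.
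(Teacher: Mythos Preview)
Your proof is correct, and item (1) matches the paper's argument essentially verbatim. For items (2) and (3), however, you take a noticeably more elaborate route than the paper does.

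For (2), the paper applies Theorem \ref{PhiRelative} to the two \emph{connected} cylinders $(N\times\R, G_{01})$ and $(N\times\R, G_{02})$, identifying only the common $g_0$-end. The chop-and-paste manifold $M''$ is then immediately a single cylinder going from $g_2$ to $g_1$, so its $L^2$ index is $i_{(2)}(\tN;g_2,g_1)$ by definition --- no further analysis needed. This yields $i_{(2)}(\tN;g_0,g_1) - i_{(2)}(\tN;g_0,g_2) = i_{(2)}(\tN;g_2,g_1)$ directly; antisymmetry and the cocycle follow by specializing the metrics. Your disconnected setup $M=M_{01}\sqcup M_{12}$ with two end-identifications also works, but it forces you to argue that the resulting $M''$ is a $g_1\to g_1$ cylinder with the correct spin structure, and then to invoke (1) plus Lichnerowicz to kill it.

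For (3), the paper applies Theorem \ref{PhiRelative} to $\widehat X$ (with end-metric $g_1$) against the single concordance cylinder $Y\times\R$ (going $g_0\to g_1$), identifying the $g_1$-ends. The resulting $M''$ is then directly recognized as $\widehat X$ with a metric that restricts to $g_0+dt^2$ at infinity, so its index is $i_{(2)}(\tX,g_0)$ by the independence-of-extension lemma, and the formula drops out. Your choice $M'=\widehat X_0\sqcup M_{01}$ produces a disconnected $M''$ whose two components you must analyze separately (one contributing $-i_{(2)}(\tX,g_0)$ from the orientation reversal, the other $+i_{(2)}(\tX,g_0)$ via the enlargement argument), with a cancellation at the end.

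In short: both approaches are valid. The paper's buys simplicity by always arranging for $M''$ itself to be the object whose index one wants, rather than something whose index must be shown to vanish. Your approach is a bit more systematic (always reduce to $\Ind_{(2)}(\widetilde{D''})=0$) at the cost of the orientation and spin-structure bookkeeping you flag at the end.
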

\medskip

\begin{proof}\
Let $G'$ be another smooth metric on $M$ which coincides with $g_0+dt^2$ for $t\leq 0$ and with $g_1+dt^2$ for $t\geq 1$. Then the identity map $M\to M$ allows to apply our $L^2$ relative index formula to deduce that 
$$
\Ind_{(2)} \left(D^+_{\tM, G}\right) - \Ind_{(2)} \left(D^+_{\tM, G'}\right) = \int_{N\times [0, 1]} \widehat{A} (G) -  \int_{N\times [0, 1]} \widehat{A} (G').
$$ 
But the latter difference of integrals coincides with an integral over $N\times \S^1$ with $\S^1=[0, 2]/\sim$ and with the metric $G$ on $N\times [0, 1]$ and the leafwise metric $f^*G'$ on $N\times [1, 2]$ with $f: N\times [1, 2]\to N\times [0, 1]$ given by $f(y, t):= (y, 2-t)$. It is an obvious observation that we get in this way a well defined smooth  metric $\widehat{G}$ on the quotient $N\times \S^1$. Moreover,
$$
\int_{N\times [0, 1]} \widehat{A} (G) -  \int_{N\times [0, 1]} \widehat{A} (G')  =  \int_{N\times \S^1} \widehat{A} (\widehat{G}).
$$
The RHS being independent of the choice of metric, it  obviously vanishes, and we conclude that $\Ind_{(2)} (D^+_{\tM, G}) = \Ind_{(2)} (D^+_{\tM, G'})$.

Let us prove now the second item. We shall apply the $L^2$ $\Phi$-index theorem. More precisely, if we consider  two complete  metrics on $N\times \R$ defined as follows. The first one, denoted $G_{01}$ is any metric which coincides with $g_0+dt^2$ on $N\times (-\infty, 0]$ and with $g_1+dt^2$ on $N\times [1, +\infty)$. The second, denoted $G_{02}$ is any metric which coincides again with $g_0+dt^2$ on $N\times (-\infty, 0]$ but with $g_2+dt^2$ on $N\times [1, +\infty)$.  These corresponding two complete $\Gamma$-covers then are identified on $\tN\times (-\infty, 0]$ by the identity map, and we may apply the $\Phi$-relative $L^2$-index theorem since the metrics $G_{01}$ and $G_{02}$ both have PSC near infinity, we get
$$
i_{(2)} (\tN; g_0, g_1)  - i_{(2)} (\tN; g_0, g_2) = i_{(2)} (\tN; g_2, g_1),
$$
and therefore since $i_{(2)} (\tN, g, g') + i_{(2)} (\tN, g', g)=0$ by the sam argument:
$$
0=i_{(2)} (\tN; g_0, g_1)  - i_{(2)} (\tN; g_0, g_2) - i_{(2)} (\tN; g_2, g_1) = i_{(2)} (\tN; g_0, g_1) + i_{(2)} (\tN; g_2, g_0) + i_{(2)} (\tN; g_1, g_2).
$$

It remains to  prove the third item. Let $G$ and $G'$ be smooth metrics on $X$ which coincide on a collar neighborhood $U_\ep\simeq Y\times [-\ep, 0]$ with $g_0+dt^2$ and $g_1+ dt^2$ respectively. We are assuming again that $\tX\to X$ is also a Galois $\Gamma$-cover over $X$ which restricts to $\tY\to Y$ at the boundaries. Consider again the augmented  manifold $\widehat{\tX}$ obtained by adding the semi-cylinder $\tY\times [0, +\infty)$ with the corresponding metric lifted from $Y\times [0, +\infty)$. We obtain our invariants  $i_{(2)} (\tX, g_1)$ and  $i_{(2)} (\tX, g_0)$ as before.  We consider the new  metric $G''$ on $X$ which is defined as follows. On $X\smallsetminus \overline{U_{\ep/2}}\simeq Y\times [-\ep/2, 0]$ we take $G''=G'$. On $U_{\ep/2}$ we take a smooth leafwise metric  which coincides with $g_1+dt^2$ near $-\ep/2$ and with $g_0+dt^2$ near $0$. Using the same construction as before, we get the invariant $i_{(2)} (X, g_0)$ by using this new leafwise metric. Recall that $i_{(2)} (X,  g_0)$ does not depend on the choice of such metric. By applying the $\Phi$-relative index theorem \ref{PhiRelative} to the $\Gamma$-covering manifold $\widehat{\tX}\to \widehat{X}$ with the metric associated with $G'$ on the one hand, and the covering  $\tY\times \R \to Y\times \R$ with the metric associated with  $G$ defining the invariant $i_{(2)} (Y; g_0, g_1)$ on the other hand, we get
$$
i_{(2)} (\tX, g_1) - i_{(2)} (\tY, g_0, g_1) = i_{(2)} (\tX, g_0) .
$$
Hence the conclusion. 

Hence we see from this relation that the difference $i_{(2)} (\tX, g_1) - i_{(2)} (\tX, g_0) $ does not depend on the covering bordant manifold $\tX\to X$. Moreover, if $g'_0$ belongs to the concordance class of $g_0$ then we know that the invariant $i_{(2)} (Y; g'_0, g_0)$ can be computed as the $\Gamma$-index of the Dirac operator on the $\Gamma$-cover $\tY\times \R\to Y\times \R$ for a metric which has PSC everywhere, and hence $i_{(2)} (Y; g'_0, g_0)=0$. Therefore, we deduce that $i_{(2)} (\tX, g_0)$  only depends on the concordance class of $g_0$ in $\maR^+(Y, F_Y)$.

\end{proof}

We have the following corollary of Theorem \ref{AtiyahAPS}.

\medskip

\begin{corollary}\label{Atiyah-GL}
Assume that the group $\Gamma$ is \underline{torsion free} and that the maximal Baum-Connes map for $\Gamma$ is rationally onto, then  the following Atiyah-type theorem holds:
$$
i_{(2)} (\tN; g_0, g_1) = i (N; g_0, g_1), \quad \forall g_0, g_1\in \maR^+(N).
$$
\end{corollary}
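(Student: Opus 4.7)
The plan is to recognize that the corollary follows almost immediately from Theorem \ref{Absolute} applied to the complete even-dimensional spin manifold $M = N \times \R$ equipped with a suitable metric $G$.

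First, I would set up the invariants carefully. Given $g_0, g_1 \in \maR^+(N)$, choose any smooth metric $G$ on $M = N \times \R$ with $G = g_0 + dt^2$ for $t \leq 0$ and $G = g_1 + dt^2$ for $t \geq 1$. By definition,
\[
i(N; g_0, g_1) = \Ind(D^+_{M,G}) \quad \text{and} \quad i_{(2)}(\tN; g_0, g_1) = \Ind_{(2)}(D^+_{\tM, G}),
\]
where $\tM = \tN \times \R \to M$ is the Galois $\Gamma$-covering induced from $\tN \to N$.

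Next, I would check that Theorem \ref{Absolute} applies. Since $\dim N$ is odd, the manifold $M$ is even-dimensional. Since $g_0$ and $g_1$ have PSC and are product metrics at infinity, the scalar curvature $\kappa_G$ is uniformly bounded below by a positive constant $4\kappa_0$ outside the compact subspace $N \times [0,1]$. By the Lichnerowicz formula (Theorem \ref{Lichne}) applied to the spin-Dirac operator $D_{M,G}$, the zero-th order curvature operator $\maR = \kappa_G / 4$ satisfies $\maR \geq \kappa_0 \Id$ off a compact subspace, which is exactly the uniform invertibility near infinity hypothesis. Thus $D_{M,G}$ is a generalized Dirac operator on the complete even-dimensional manifold $M$ which is invertible near infinity.

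Finally, applying Theorem \ref{Absolute} directly to the operator $D_{M,G}$ and the Galois $\Gamma$-covering $\tM \to M$, and using the torsion-freeness of $\Gamma$ together with the hypothesis that $\mu_{\Gamma,\max}$ is rationally onto, we obtain
\[
\Ind_{(2)}(D^+_{\tM, G}) = \Ind(D^+_{M,G}),
\]
which by the definitions above is exactly the desired equality $i_{(2)}(\tN; g_0, g_1) = i(N; g_0, g_1)$. I do not anticipate a genuine obstacle here: the entire content of the corollary is already present in Theorem \ref{Absolute}, and the only thing to do is verify that the Gromov-Lawson invariants $i$ and $i_{(2)}$ fall within its scope, which is immediate from the PSC hypothesis on $g_0, g_1$ and the standard Lichnerowicz identity.
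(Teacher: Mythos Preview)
Your proof is correct. It is in fact a more direct route than the one the paper takes: the paper's proof invokes Theorem~\ref{AtiyahAPS} applied to the compact manifold with boundary $N\times[0,1]$ and its $\Gamma$-covering $\tN\times[0,1]$, which in turn is proved by attaching a semi-cylinder and reducing to Theorem~\ref{Absolute} via the APS identification $\Ind^{APS}(D_X)=\Ind(\widehat{D_X}^+)$ (and its $L^2$ analogue from \cite{Vaillant, Antonini}). You instead apply Theorem~\ref{Absolute} directly to the complete manifold $M=N\times\R$, which is exactly how the invariants $i(N;g_0,g_1)$ and $i_{(2)}(\tN;g_0,g_1)$ are defined in the first place, so no APS identification is needed. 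Both arguments are valid; yours simply short-circuits the detour through the manifold-with-boundary formulation.
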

\medskip

\begin{proof}\ Just apply Theorem \ref{AtiyahAPS}  to the $\Gamma$-covering $\tN\times [0, 1]$ of the compact manifold with boundary $N\times [0, 1]$. 
\end{proof}


When $\Gamma$ has torsion, this corollary is  false. Indeed, take the Lens space $L=\S^7/\Z_n$ where $\Z_n$ acts diagonally on $\S^7\subset \C^4$. Then if we fix the standard metric $h_0$ on $L$  which is induced by the standard metric $g_0$ on $\S^7$ with constant scalar curvature equal to $1$, then the map
$$
\kappa : \pi_0(\maR^+(L))  \longrightarrow \Z \text{ defined by } \kappa (g):=i(\S^7; g_0, g) - n\times i(L; h_0, g),
$$
has infinite range in $\Z$. Notice that $\kappa (g)=n\times  \left(i_{(2)} (\S^7; h_0, g) - i (L; h_0, g)\right)$. To see this, just observe by the APS formula that we have  the following relation:
$$
\kappa (g) = n \times  \frac{\rho (g_0) - \rho (g)}{2},
$$
where $\rho$ is the APS rho invariant, a difference of eta invariants \cite{BenameurPiazza}. Then the main result proved in  \cite{BotvinnikGilkey} completes the argument.
More generally, using the results of \cite{BotvinnikGilkey}, one can deduce that any $4k+3$-dimensional spin manifold, with $k\geq 1$ which has a metric of PSC and finite fundamental group, admits an infinite collection of non-concordant metrics of PSC, parametrized by a similar invariant $\kappa$ as defined above. 


\begin{remark}
By using Proposition 2.14 in \cite{KreckStolz}, we see that $g\mapsto i(L; h_0, g)$ already has infinite range.
\end{remark}

Given an orientation preserving diffeomorphism $F\in \Diff^+(N)$ of $N$, we may transport any PSC metric $g$ into the PSC metric $F^*g$. Notice also that the spin structure associated with $g$ on $M$ canonically  determines a spin structure associated with $F^*g$ which is in fact the pull-back spin structure under $F$.  By applying again the APS formula as well as its $L^2$ version, we can deduce the following

\medskip
  
\begin{theorem}
Let $F\in \Diff^+(N)$ be an orientation preserving diffeomorphism. Then for any Galois cover $\tN\to N$ with group  $\Gamma$, we have: 
$$
i_{(2)} (\tN; g_0, F^*g_0) = i(N; g_0, F^*g_0).
$$
In particular, the invariant $\kappa_{g_0}^\Gamma: g\mapsto i_{(2)} (\tN; g_0, g) - i(N; g_0, g)$ induces a well defined map
$$
\kappa_{g_0}^\Gamma : \pi_0 \left( \maR^+(N)/\Diff^{+} (N) \right) \longrightarrow \R.
$$
\end{theorem}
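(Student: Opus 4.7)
\noindent\textit{Proof plan.}
The plan is to deduce this theorem by combining the classical APS formula with its $L^2$-version, exactly as announced before the statement. The first step is to identify both Gromov-Lawson invariants $i(N; g_0, F^*g_0)$ and $i_{(2)}(\tN; g_0, F^*g_0)$ with APS indices on the compact spin manifold $X := N\times[0,1]$ and on its Galois $\Gamma$-cover $\tX := \tN\times[0,1]$, respectively, equipped with a metric $G$ that is product $g_0+dt^2$ in a collar of $N\times\{0\}$ and product $F^*g_0+dt^2$ in a collar of $N\times\{1\}$. This reduction is Proposition~3.11 of \cite{APS1} downstairs and its covering version from \cite{Ramachandran}/\cite{Vaillant} upstairs, as already invoked in the proof of Theorem~\ref{AtiyahAPS}; the boundary invertibility hypotheses are automatic since $D_{g_0}$ and $D_{F^*g_0}$ together with their $\Gamma$-lifts are invertible by the Lichnerowicz formula applied to the PSC metrics.

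Next, I would write down the classical APS index formula and its $L^2$-analogue from \cite{Ramachandran} and subtract them. The interior contribution $\int_X \widehat{A}(G)$ cancels, so the claimed equality reduces to the identity of Cheeger-Gromov rho invariants
$$\rho_\Gamma(F^*g_0) = \rho_\Gamma(g_0),\qquad \rho_\Gamma := \eta_{(2)} - \eta.$$
The classical half $\eta(D_{F^*g_0}) = \eta(D_{g_0})$ is immediate: $F$ is by construction a spin-isometry $(N, F^*g_0) \to (N, g_0)$ (via the canonical identification of pull-back spin structures recalled before the theorem), which conjugates the Dirac operators and hence preserves the spectrum with multiplicities. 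The main obstacle, and the heart of the argument, is the corresponding $L^2$-statement $\eta_{(2)}(\tD_{F^*g_0}) = \eta_{(2)}(\tD_{g_0})$. To establish it I would argue that $F$ induces an isometry of Riemannian $\Gamma$-coverings $(\tN, \widetilde{F^*g_0}) \to (\tN, \widetilde{g_0})$, well defined up to the outer automorphism of $\Gamma$ induced by $F_*$, and then invoke the invariance of the canonical finite trace $\tau_e$ on the group von Neumann algebra $\maN\Gamma$ under outer automorphisms to deduce that the relevant von Neumann spectral density, and hence the $L^2$-eta integral, is preserved. This naturality point is delicate precisely because $F$ need not literally lift to $\tN$.

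For the second assertion, I would first note that $\kappa_{g_0}^\Gamma$ already descends to $\pi_0(\maR^+(N))$ since both $i$ and $i_{(2)}$ vanish on PSC concordances, as recalled just before Proposition~\ref{Properties}. For the $\Diff^+(N)$-action, applying the cocycle relation of Proposition~\ref{Properties}(2) to the triple $(g_0, g, F^*g)$ for both $i$ and $i_{(2)}$ and subtracting yields
$$\kappa_{g_0}^\Gamma(g) - \kappa_{g_0}^\Gamma(F^*g) = i(N; g, F^*g) - i_{(2)}(\tN; g, F^*g),$$
and the right-hand side vanishes by the first part of the theorem (applied with $g$ in place of $g_0$). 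Therefore $\kappa_{g_0}^\Gamma$ descends further to a well-defined map on $\pi_0\bigl(\maR^+(N)/\Diff^+(N)\bigr)$.
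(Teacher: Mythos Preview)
Your approach matches the paper's. The paper's proof of the first part is the single sentence ``an easy consequence of the APS theorem,'' and your reduction to $\rho_\Gamma(F^*g_0)=\rho_\Gamma(g_0)$ via the APS formula and its $L^2$ analogue from \cite{Ramachandran} is exactly the intended argument, spelled out with more care than the paper provides. Your proof of the second part is identical to the paper's: apply the additivity relation of Proposition~\ref{Properties}(2) for both $i$ and $i_{(2)}$ and subtract, then invoke the first part with $g$ in place of $g_0$.

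One comment on the delicate point you flag. Your proposed resolution of $\eta_{(2)}(\tD_{F^*g_0})=\eta_{(2)}(\tD_{g_0})$ is to lift $F$ to a $\phi$-equivariant isometry $\tF$ of $\tN$ (for some automorphism $\phi$ of $\Gamma$) and use that the canonical trace on $\maN\Gamma$ is automorphism-invariant. This is correct \emph{when such a lift exists}, i.e.\ when $F_*(H)=H$ where $H=\ker(\pi_1(N)\to\Gamma)$; this is automatic for the universal cover (the case the paper turns to immediately after the theorem) but not for an arbitrary Galois cover as the statement literally reads. If $F$ does not lift, there is no automorphism of $\Gamma$ induced by $F_*$, and the coverings $(\tN,\pi^*F^*g_0)$ and $(\tN,\pi^*g_0)$ need not be $\Gamma$-equivariantly isometric, so the argument as written does not go through. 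The paper does not address this either; in practice one should read the statement with the implicit hypothesis that $F$ preserves the cover (or restrict to the universal cover), and your proof is complete under that reading.
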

\medskip

\begin{proof}
The first part of the theorem is an easy consequence of the APS theorem. 

Assume now that $g$ is a given PSC metric on $N$, then  by Proposition \ref{Properties}
$$
i(N; g_0, F^*g) - i(N; g_0, g)  = i(N; g, F^*g)\text{ while }i_{(2)} (\tN; g_0, F^*g) - i_{(2)} (\tN; g_0, g)  = i_{(2)}(\tN; g, F^*g).
$$
Therefore, $\kappa_{g_0}^\Gamma (F^*g) - \kappa_{g_0}^\Gamma (g) = i_{(2)}(\tN; g, F^*g) - i(N; g, F^*g) = 0.$
\end{proof}

Assume now  that $\tN\to N$ is the universal cover, so with $\Gamma=\pi_1(N)$. A smooth orientation preserving diffeomorphism  $F\in \Diff^+(N)$ of $N$ as above then lifts to a smooth orientation preserving diffeomorphism $\tF$ of $\tN$ which is equivariant relative to an outer automorphism  of the group $\Gamma$, and we can choose for the given $F$ a representative automorphism that we denote by $\phi_F\in \Aut (\Gamma)$, which allows to define an action of $\Z$ on the group $\Gamma$. We denote by $\Gamma\rtimes \Z$ the semi-direct product, see for instance \cite{KreckStolz, XieYu}. Notice that we could as well consider the transported metric $\tF^*\tg$ of the pull-back metric $\tg$ on $\tN$ but it is an obvious observation that  the new metric $\tF^*\tg$ is still  $\Gamma$-invariant and induces  the metric $F^*g$.

\begin{proposition}
Let $\tN\to N$ be the universal cover of the closed riemannian manifold $(N, g)$ with PSC. Denote by $\widehat{N}$ the closed manifold which is the quotient under the free proper action of $\Gamma\rtimes \Z$, i.e. $\widehat{N}:= (\tN\times \R)/\Gamma\rtimes \Z$. Then, using the structures induced from those fixed on $N$, the index of the Dirac operator on $\widehat{N}$ coincides with the Gromov-Lawson index $ i (N; g, F^*g)$.
\end{proposition}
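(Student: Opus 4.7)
My plan is to realize both the index on $\widehat N$ and the Gromov--Lawson invariant $i(N;g,F^*g)$ as integrals of the same $\widehat A$-form over a compact fundamental region, and then to show that the boundary $\eta$-corrections which a priori distinguish them cancel thanks to the $F$-equivariance of the spin structures. The argument stays entirely on the base and uses no $L^2$ machinery.

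The first step is to identify $\widehat N$ concretely. Quotienting the free action of $\Gamma\rtimes\Z$ on $\tN\times\R$ by the normal subgroup $\Gamma$ yields $N\times\R$ equivariantly, and the residual $\Z$-action downstairs is $(x,t)\mapsto(F(x),t-1)$, so $\widehat N$ is the mapping torus $N\times[0,1]/(x,1)\sim(F(x),0)$. Choose a smooth path $\{g_t\}_{t\in[0,1]}$ of metrics on $N$ with $g_t=g$ near $t=0$ and $g_t=F^*g$ near $t=1$; the metric $H:=g_t+dt^2$ on $N\times[0,1]$ descends to a well-defined metric $\widehat G$ on $\widehat N$, and extending $H$ by the product metrics $g+dt^2$ for $t\le 0$ and $F^*g+dt^2$ for $t\ge 1$ produces a Gromov--Lawson metric $G$ on $N\times\R$, so that $i(N;g,F^*g)=\Ind(D^+_{N\times\R,G})$.

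I would then compute both indices. Atiyah--Singer on the closed even-dimensional spin manifold $(\widehat N,\widehat G)$ gives
\begin{equation*}
\Ind(D_{\widehat N})=\int_{\widehat N}\widehat A(\widehat G)=\int_{N\times[0,1]}\widehat A(H),
\end{equation*}
since $\widehat A$ is a local Riemannian invariant and $N\times[0,1]$ is a fundamental domain for the identification. On the other hand, by the Gromov--Lawson/APS identification recalled in Section \ref{APSinvariants} (cf.\ \cite{APS1}, Proposition 3.11), $i(N;g,F^*g)$ equals the APS index of $D_H$ on $(N\times[0,1],H)$ with the global APS boundary condition; the APS index theorem then yields
\begin{equation*}
i(N;g,F^*g)=\int_{N\times[0,1]}\widehat A(H)-\tfrac12\bigl(\eta(D_{N,F^*g})-\eta(D_{N,g})\bigr),
\end{equation*}
the boundary-kernel corrections vanishing because $g$ and $F^*g$ are PSC (so $\ker D_{N,g}=\ker D_{N,F^*g}=0$ by Lichnerowicz).

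It then remains only to show $\eta(D_{N,F^*g})=\eta(D_{N,g})$. By the convention fixed just before the statement, the spin structure associated with $F^*g$ is the $F$-pullback of the spin structure associated with $g$; with this choice, $F$ is a spin-preserving isometry $(N,F^*g)\to(N,g)$, and the induced unitary on $L^2$-spinors conjugates $D_{N,F^*g}$ onto $D_{N,g}$. Unitarily equivalent self-adjoint operators have identical spectra with multiplicities, hence identical $\eta$-invariants, so the $\eta$-contributions cancel and we obtain $\Ind(D_{\widehat N})=\int_{N\times[0,1]}\widehat A(H)=i(N;g,F^*g)$. The only nonroutine point is this $\eta$-cancellation, which hinges entirely on the $F$-equivariant choice of spin structure built into the setup; all the remaining ingredients have already been recalled in the paper.
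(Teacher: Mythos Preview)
Your proof is correct and follows essentially the same route as the paper: identify $\widehat N$ with the mapping torus, compute the closed index via Atiyah--Singer as $\int_{N\times[0,1]}\widehat A$, compute $i(N;g,F^*g)$ via the APS formula on $N\times[0,1]$, and cancel the $\eta$-terms using that $F$ is a spin-preserving isometry. Your write-up is in fact more explicit than the paper's on two points---the identification of $\widehat N$ as a mapping torus and the reason why $\eta(D_{N,F^*g})=\eta(D_{N,g})$---both of which the paper leaves implicit.
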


\begin{proof}\ We consider a metric $G$ on $N\times [0, 1]$ which coincides with $g$ near $0$ and with $F^*g$ near $1$, then this metric induces a smooth metric on the mapping torus $\widehat{N}$, and we have
$$
\int_{N\times (0, 1)} \widehat{A} (G) = \int_{N_F} \widehat{A} (T\widehat{N}).
$$
Hence by \cite{APS1}[Proposition 3.11], we may compute $i(N; g, F^*g)$ using the  Atiyah-Patodi-Singer index formula on $N\times [0, 1]$, i.e.
$$
i(N; g, F^*g) = \int_{N\times (0, 1)} \widehat{A} (G) -\frac{\eta (D_{(N, F^*g)}) - \eta (D_{(N, g)})}{2}.
$$
But  we know that $\eta (D_{(N, F^*g)}) = \eta (D_{(N, g)})$, hence we get
$$
i(N; g, F^*g) = \int_{N\times (0, 1)} \widehat{A} (G) = \int_{\widehat{N}} \widehat{A} (T\widehat{N}).
$$
The proof is complete since the latter integral is equal to the index of the Dirac operator by the Atiyah-Singer formula on $\widehat{N}$. 
\end{proof}

In \cite{KreckStolz}, the similar previous construction allowed to deduce the existence  for any $k\geq 2$ of a  closed spin manifold $N$ of dimension $4k-1$, which moreover admits  a metric of PSC, such that the image of the map 
$$
\Diff^{\Spin} (N) \longrightarrow \Z
$$
sending $F$ to the index of the Dirac operator on the mapping torus $\widehat{N}$, is $\Z$ for $k$ even and $2\Z$ for $k$ odd. Here $\Diff^{\Spin} (N)\subset \Diff^+(N)$ is the subgroup of spin-preserving diffeomorphisms.

\medskip

\appendix

\section{A Rellich lemma and a second proof of Theorem \ref{Main}}\label{Rellich}

We give in this appendix  another proof of Theorem \ref{Main} based on the Rellich lemma and following the direct method applied in \cite{XieYu} for the higher index theory. It is worthpointing out that the present proof does not use Theorem \ref{Compatibility} since this latter used in its proof Theorem \ref{Main}. We have  added a short proof of the Rellich lemma in this von Neumann context, which is ready for generalizations for instance to foliations. Given a compact subspace $K$ of $M$, we denote by $L^2_{s, K} (\tM, \tS)$ the subspace of $L^2_s (\tM, \tS)$ composed of those sections  which are supported within the $\Gamma$-compact subspace $\pi^{-1} (K)$ of $\tM$. Recall from Section \ref{Preliminaries} the semi-finite von Neumann algebra $\maM=L^2(\tM, \tS)^\Gamma$ with its trace $\tau$. We have again chosen a fundamental domain $F$ for the Galois cover $\pi:\tM\to M$ with the usual properties.

\begin{lemma}\label{Rellich} ($L^2$ Rellich lemma)\
Assume that for a fixed compact subspace $K$ of $M$, an element $T\in \maM$ factors through the inclusion $L^2_{1, K} (\tM, \tS) \hookrightarrow L^2 (\tM, \tS)$ via $\Gamma$-invariant operators. Then $T$ belongs to the ideal $\maK (\maM, \tau)$ of $\tau$-compact operators.
\end{lemma}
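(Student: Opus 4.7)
The plan is to factor $T$ in $\maM$ as the product of a bounded $\Gamma$-invariant operator with a ``Rellich-type'' $\tau$-compact operator $M_{\tilde\chi}(1 + \Delta)^{-1/2}$, where I set $\Delta := \widetilde{\nabla}^*\widetilde{\nabla}$ (the lifted connection Laplacian on $\tS$) and $\tilde\chi$ is the $\Gamma$-invariant lift of a compactly supported smooth function on $M$ equal to $1$ on $K$. Since sections in $L^2_{1, K}(\tM, \tS)$ are supported inside $\pi^{-1}(K) \subset \{\tilde\chi = 1\}$, one has $T = M_{\tilde\chi} T$. Moreover, by the definition of $L^2_1$ given in Section \ref{Preliminaries}, the operator $(1 + \Delta)^{1/2}$ is a $\Gamma$-invariant isometry from $L^2_1(\tM, \tS)$ onto $L^2(\tM, \tS)$, so that $(1 + \Delta)^{1/2} T$ is a bounded $\Gamma$-invariant operator, i.e.\ an element of $\maM$. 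The resulting factorization
\begin{equation*}
T \;=\; \bigl[M_{\tilde\chi}(1+\Delta)^{-1/2}\bigr] \circ \bigl[(1+\Delta)^{1/2} T\bigr]
\end{equation*}
in $\maM$, combined with the two-sided ideal property of $\maK(\maM, \tau)$, reduces the task to showing that $M_{\tilde\chi}(1+\Delta)^{-1/2}$ belongs to $\maK(\maM, \tau)$.

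To this end, I would use the heat-kernel representation $(1+\Delta)^{-1/2} = \pi^{-1/2} \int_0^{+\infty} t^{-1/2} e^{-t(1+\Delta)}\, dt$ together with the truncations $A_\ep := \pi^{-1/2} \int_\ep^{+\infty} t^{-1/2} e^{-t(1+\Delta)}\, dt$. The spectral bound $\|e^{-t(1+\Delta)}\| \leq e^{-t}$ gives $\|(1+\Delta)^{-1/2} - A_\ep\| \to 0$ as $\ep \to 0^+$, whence $M_{\tilde\chi} A_\ep \to M_{\tilde\chi}(1+\Delta)^{-1/2}$ in operator norm; since $\maK(\maM, \tau)$ is norm-closed, it suffices to check that each $M_{\tilde\chi} A_\ep$ (for $\ep > 0$ fixed) is $\tau$-compact. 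I would in fact prove it is $\tau$-Hilbert--Schmidt, via the Minkowski-type estimate
\begin{equation*}
\|M_{\tilde\chi} A_\ep\|_{2, \tau} \;\leq\; \pi^{-1/2} \int_\ep^{+\infty} t^{-1/2} e^{-t} \, \|M_{\tilde\chi} e^{-t\Delta}\|_{2, \tau}\, dt,
\end{equation*}
each factor $\|M_{\tilde\chi} e^{-t\Delta}\|_{2, \tau}^2$ being equal to $\int_F |\tilde\chi(\tm)|^2 \tr\bigl(e^{-2t\Delta}(\tm, \tm)\bigr) d\tm$ by the explicit formula for $\tau$, and bounded for $t \geq \ep$ thanks to the standard short-time diagonal estimate $e^{-s\Delta}(\tm, \tm) = O(s^{-\dim M / 2})$ on the compact set $\mathrm{supp}(\tilde\chi) \cap \overline F$.

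The main technical obstacle I anticipate is the careful justification of these semifinite heat-kernel manipulations: one must verify that a Bochner integral of $\tau$-Hilbert--Schmidt operators is $\tau$-Hilbert--Schmidt, confirm the identification of $\tau\bigl(M_{\tilde\chi} e^{-2t\Delta} M_{\tilde\chi}\bigr)$ with the above diagonal integral for a suitable fundamental domain $F$, and check that the pointwise heat-kernel bounds combine with Minkowski's inequality to yield convergence of the $t$-integral for every $\ep > 0$ (but not necessarily as $\ep \to 0^+$, which is precisely why the norm-approximation step is essential). All of these are standard facts for Galois coverings of complete manifolds restricted to compact base pieces, following the framework developed in \cite{AtiyahCovering}, and once they are in place the ideal property of $\maK(\maM, \tau)$ finishes the argument.
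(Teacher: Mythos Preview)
Your argument is correct and takes a genuinely different route from the paper's proof. The paper argues sequentially: it invokes Kaftal's characterisation of $\tau$-compactness (an operator in $\maM$ is $\tau$-compact iff it sends bounded, $\maM$-weakly convergent sequences to norm-convergent ones), and verifies this criterion by restricting to each translate $\gamma F$ and applying the \emph{classical} Rellich lemma on the base $M$, then reassembling via an $\ell^2$-summation over $\Gamma$. Your approach bypasses the sequential criterion entirely: you exhibit an explicit factorisation $T = \bigl[M_{\tilde\chi}(1+\Delta)^{-1/2}\bigr]\bigl[(1+\Delta)^{1/2}T\bigr]$ in $\maM$ and prove the first factor is $\tau$-compact by writing it as a norm limit of $\tau$-Hilbert--Schmidt operators via heat-kernel truncation.

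What each buys: the paper's proof is more elementary in that it needs no heat-kernel analysis and makes the reduction to the classical Rellich lemma transparent; it also generalises readily to settings (such as foliations) where one has a good local picture but perhaps less control on global functional calculus. Your proof is more quantitative and self-contained---it actually identifies an explicit $\tau$-compact ``Rellich operator'' $M_{\tilde\chi}(1+\Delta)^{-1/2}$ and shows it lies in the norm-closure of $L^2(\maM,\tau)$, which is a sharper statement. It also avoids the somewhat delicate step in the paper's argument where pointwise convergence $\|f_n-f\|_{L^2(\gamma F)}\to 0$ for each $\gamma$ must be upgraded to global $L^2$-convergence. The technical points you flag (Bochner--Minkowski for the $\tau$-HS norm, the diagonal trace formula, uniform short-time heat bounds over $\operatorname{supp}(\chi)$) are indeed standard for Galois coverings and pose no obstruction; note in particular that the uniform bound $\|M_{\tilde\chi}e^{-t\Delta}\|_{2,\tau}\leq \|M_{\tilde\chi}e^{-\ep\Delta}\|_{2,\tau}$ for $t\geq\ep$ (via $e^{-t\Delta}=e^{-\ep\Delta}e^{-(t-\ep)\Delta}$ and contractivity of the semigroup) makes the convergence of your Minkowski integral immediate.
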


\begin{remark} When $\Gamma$ is trivial,  this is the exact statement of the classical Rellich lemma on $M$.
\end{remark}

\begin{proof}
Let $(f_n)_{n\geq 0}$ be a uniformly bounded sequence which belongs to $L^2_{1, K} (\tM, \tS)$ and let $f\in L^2_{1, K} (\tM, \tS)$ be such that for any $\varphi\in L_{1, K}^2(M, S)\simeq L_{1,K}^2(F, \tS)$, we have
$$
\sum_{\gamma\in \Gamma} \langle \gamma^*(f_n - f), \varphi\rangle_{L_1^2(F, \tS)} \longrightarrow  0\text{ as }n\to +\infty. 
$$
Then for any $\gamma\in \Gamma$, the restriction of $\gamma^*(f_n-f)$ to $F$ defines a sequence in $L^2_{1,K} (M, S)$ which converges weakly to $0$ in that Hilbert space. Since $K$ is compact in $M$, the classical Rellich lemma in $M$ applies and we can deduce that the sequence $(\gamma^*f_n)_{n\geq 0}$ converges to $\gamma^*f$ in $L^2 (M, S)\simeq L^2(F, \tS)$, say
$$
\vert\vert f_n - f\vert\vert_{L^2 (\gamma F, \tS)} \longrightarrow  0\text{ as }n\to +\infty. 
$$
Since 
$$
\sum_{\gamma\in \Gamma}  \vert\vert f_n - f\vert\vert_{L^2 (\gamma F, \tS)} = \vert\vert f_n - f\vert\vert_{L^2(\tM, \tS)} <+\infty,
$$
an easy argument shows that the sequence $(f_n)_{n\geq 0}$ converges in $L^2(\tM, \tS)$ to $f$. 

Suppose now that $T\in \maM\subset B(L^2(\tM, \tS))$ satisfies the assumptions of the lemma, i.e. $T= \iota \circ A$ for some $\Gamma$-equivariant operator $A:L^2(\tM, \tS) \to L^2_{1, K} (\tM, \tS)$. If a given uniformly bounded sequence  $(\sigma_n)_{n\geq 0}$ from $L^2(\tM, \tS)$ converges weakly relative to $\maM$ in the sense of \cite{Kaftal82} to some section $\sigma\in L^2(\tM, \tS)$, then for any $\psi\in L^2(M, S)$ with $\vert\vert\psi\vert\vert =1$, and using the $\tau$ rank one operator which is the orthogonal projection onto the line generated by $\psi$ tensorized by the identity of $\ell^2\Gamma$, we deduce that
$$
\sum_{\gamma\in \Gamma} \langle g^* A (\sigma_n - \sigma), A\psi \rangle_{L^2_{1, K} (F, \tS)} = \sum_{\gamma\in \Gamma} \langle g^*(\sigma_n - \sigma), A^*A\psi \rangle_{L^2 (F, \tS)} \longrightarrow  0\text{ as }n\to +\infty. 
$$ 
We deduce from the previous discussion that $\left((\iota\circ A) (\sigma_n)\right)_{n\geq 0}$ converges in $L^2(\tM, \tS)$ to $(\iota\circ A) (\sigma)$, and hence that $T$ is $\tau$-compact, using again  the compactness criterion of \cite{Kaftal82}.
\end{proof}

Recall that the generalized Dirac operator $D$ satisfies the generalized Lichnerowicz formula 
$$
D^2=\nabla^*\nabla +\maR,
$$
and that we assume that there exist a compact subspace $K$ of $M$ such that $\maR\vert_{M\smallsetminus K}$ is uniformly positive. Then we have fixed a compactly supported smooth real valued function $\rho\in C_c^\infty (M)$ and a constant $c>0$ such that 
$$
\maR \geq (c-\rho^2)\Id \text{ over } M.
$$ 
 The operator $(\tD^2+\rho^2)^{1/2}$ is then a well defined self-adjoint non-negative operator.

\begin{lemma}
The operator $(\tD^2+\rho^2)^{1/2}$ is injective with bounded inverse. Moreover,  its inverse, denoted  $(\tD^2+\rho^2)^{-1/2}=:\tT$, is a bounded operator from $L^2 (\tM, \tS)$ to the Sobolev space $L_1^2(\tM, \tS)$. In particular, $\tT$ belongs to the von Neumann algebra $\maM$.
\end{lemma}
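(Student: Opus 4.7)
The plan is to treat $\tD^2+\rho^2$ as a non-negative self-adjoint operator (well-defined by functional calculus since $\rho^2$ is a bounded multiplication operator and $\tD^2$ is essentially self-adjoint by Proposition \ref{Self-adjoint}), and then to bound it below by a positive constant using the Lichnerowicz formula together with the inequality $\maR \geq (c-\rho^2)\Id$. Specifically, for any $\sigma \in C_c^\infty(\tM,\tS)$, I would write
$$
\langle (\tD^2+\rho^2)\sigma,\sigma\rangle \;=\; \|\widetilde{\nabla}\sigma\|^2 + \langle (\maR+\rho^2)\sigma,\sigma\rangle \;\geq\; \|\widetilde{\nabla}\sigma\|^2 + c\,\|\sigma\|^2,
$$
which in particular gives the lower bound $\langle (\tD^2+\rho^2)\sigma,\sigma\rangle \geq c\|\sigma\|^2$. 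Extending to the domain of the self-adjoint extension and taking square roots yields $\|(\tD^2+\rho^2)^{1/2}\sigma\|^2 \geq c\|\sigma\|^2$, so $(\tD^2+\rho^2)^{1/2}$ is injective with closed range. Its self-adjointness then forces the range to be all of $L^2(\tM,\tS)$ (the orthogonal complement of the range equals the kernel, which is trivial), hence $\tT$ exists as a bounded operator with $\|\tT\|\leq 1/\sqrt{c}$.

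For the Sobolev regularity statement, I would apply the displayed inequality to $u=\tT\sigma$, noting that $u$ lies in the domain of $(\tD^2+\rho^2)^{1/2}$, so that
$$
\|\sigma\|^2 \;=\; \|(\tD^2+\rho^2)^{1/2}u\|^2 \;\geq\; \|\widetilde{\nabla}u\|^2 + c\,\|u\|^2.
$$
Combined with $\|u\|^2 \leq (1/c)\|\sigma\|^2$, this gives the uniform Sobolev estimate $\|u\|_{L^2_1(\tM,\tS)}^2 \leq (1+1/c)\|\sigma\|^2$, establishing that $\tT : L^2(\tM,\tS) \to L^2_1(\tM,\tS)$ is bounded.

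Finally, the assertion that $\tT$ belongs to the Atiyah von Neumann algebra $\maM$ is a direct consequence of functional calculus: the operator $\tD^2+\rho^2$ is $\Gamma$-invariant (both $\tD^2$ and the multiplication operator by the pull-back $\rho$ commute with the unitary $\Gamma$-representation, since $\rho$ is lifted from $M$), hence its self-adjoint extension is affiliated with $\maM$, and any bounded Borel function of it lies in $\maM$.

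The main technical point that deserves care is the essential self-adjointness of $\tD^2+\rho^2$ on $C_c^\infty(\tM,\tS)$: since $\rho$ is compactly supported, multiplication by $\rho^2$ is a bounded symmetric perturbation preserving the core, and standard Kato-Rellich style reasoning, together with Proposition \ref{Self-adjoint}(3), ensures that the operator sum inherits essential self-adjointness and has the same self-adjoint extension domain as $\tD^2$. Once this is in hand, the rest of the argument is a routine application of the spectral theorem combined with the Lichnerowicz-type lower bound.
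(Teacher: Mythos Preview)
Your proposal is correct and follows essentially the same approach as the paper: both use the Lichnerowicz formula together with the pointwise bound $\maR+\rho^2\geq c\,\Id$ to obtain the lower bound $\langle(\tD^2+\rho^2)\sigma,\sigma\rangle\geq\|\widetilde{\nabla}\sigma\|^2+c\|\sigma\|^2$, and both arrive at the same Sobolev constant $1+1/c$. The only cosmetic difference is that the paper writes the Sobolev estimate as an operator computation on $(\tD^2+\rho^2)^{-1/2}\sigma$, rewriting $\widetilde{\nabla}^*\widetilde{\nabla}=(\tD^2+\rho^2)-(\rho^2+\maR)$ and using $(\tD^2+\rho^2)^{-1/2}(\tD^2+\rho^2)(\tD^2+\rho^2)^{-1/2}=\Id$, whereas you phrase it equivalently as applying the quadratic-form inequality to $u=\tT\sigma$; your discussion of essential self-adjointness via the bounded perturbation $\rho^2$ makes explicit a point the paper leaves implicit.
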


\begin{proof}
Let $\sigma\in \Dom (\tD^2)$. Then 
$$
\langle (\tD^2+\rho^2)\sigma, \sigma\rangle = \langle (\widetilde{\nabla}^*\widetilde{\nabla}  + \maR +\rho^2\Id) \sigma, \sigma\rangle\geq  \vert\vert \widetilde{\nabla} \sigma \vert\vert^2 + c\vert\vert \sigma\vert\vert^2 \geq c\vert\vert \sigma\vert\vert^2.
$$
Hence the self-adjoint operator $\tD^2+\rho^2$ is injective and has  a bounded inverse which obviously commutes with $\Gamma$ and belongs to the von Neumann algebra $\maM$. Indeed, its spectrum is contained in $[c, +\infty)$, so that the spectrum of $(\tD^2+\rho^2)^{1/2}$ is contained in $[\sqrt{c}, +\infty)$. Therefore, the operator $\tT$ is a well defined bounded non-negative operator on $L^2(\tM, \tS)$. Moreover, for $\sigma\in L^2 (\tM, \tS)$ we can write
\begin{eqnarray*}
\vert\vert (\tD^2+\rho^2)^{-1/2} \sigma\vert\vert_1^2  & = & \langle (\tD^2+\rho^2)^{-1} \sigma, \sigma \rangle  + \langle (\tD^2+\rho^2)^{-1/2} \widetilde\nabla^*\widetilde\nabla (\tD^2+\rho^2)^{-1/2} \sigma, \sigma\rangle \\
& \leq  & \frac{1}{c}  \vert\vert\sigma\vert\vert^2 + \langle (\tD^2+\rho^2)^{-1/2} [(\tD^2+\rho^2)-(\rho^2+\maR)] (\tD^2+\rho^2)^{-1/2} \sigma, \sigma\rangle
 \end{eqnarray*}
Notice that by the properties of the continuous functional calculus, we deduce that 
$$
(\tD^2+\rho^2)^{-1/2} (\tD^2+\rho^2) (\tD^2+\rho^2)^{-1/2} = \Id.
$$
Hence, we obtain since $\rho^2 \Id+\maR \geq c\Id$:
$$
\vert\vert (\tD^2+\rho^2)^{-1/2} \sigma\vert\vert_1^2  \leq (1+\frac{1}{c}) \vert\vert\sigma\vert\vert^2.
$$
Therefore, the operator $(\tD^2+\rho^2)^{-1/2}$ is a bounded operator from $L^2(\tM, \tS)$ to $L^2_1(\tM, \tS)$ as announced.
\end{proof}

\begin{remark}
The same proof shows that for any $\lambda\in \R$, the operator $T_\lambda:=(\tD^2+\rho^2+\lambda^2)^{-1/2}$ extends to a bounded operator from $L^2(\tM, \tS)$ to $L_{1}^2(\tM, \tS)$.
\end{remark}

The following proposition is a von Neumann version of results from \cite{XieYu}.

\begin{proposition}\label{Flambda}
For any $\lambda\in \R$, consider the operator $\tF_\lambda:=\tD(\tD^2+\rho^2+\lambda^2)^{-1/2}=\tD\tT_\lambda$, then:
\begin{enumerate}
\item $\tF_\lambda$  extends to a bounded operator on $L^2(\tM, \tS)$ which belongs to the von Neumann algebra $\maM$;
\item The $\Gamma$-invariant  operator $\tQ=(\tD^2+\rho^2)^{-1/2} \tF_0$ is a self-adjoint parametrix for $\tD$ modulo   $\tau$-compact operators with respect to the von Neumann algebra $\maM$.
\end{enumerate}
\end{proposition}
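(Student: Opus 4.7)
To prove item (1), I would exploit the pointwise operator inequality
\[
\tD^2 \leq \tD^2+\rho^2+\lambda^2,
\]
valid as quadratic forms since $\rho^2+\lambda^2 \geq 0$. Because $\tT_\lambda$ maps $L^2(\tM,\tS)$ into $L^2_1(\tM,\tS) \subset \Dom(\tD)$ (by the same argument as the lemma preceding the proposition, applied with shift $\lambda^2$), setting $\sigma = \tT_\lambda \psi$ yields
\[
\vert\vert \tF_\lambda\psi\vert\vert^2 = \vert\vert \tD\tT_\lambda\psi\vert\vert^2 \leq \vert\vert (\tD^2+\rho^2+\lambda^2)^{1/2}\tT_\lambda\psi\vert\vert^2 = \vert\vert \psi\vert\vert^2,
\]
where the last equality uses the functional calculus identity $(\tD^2+\rho^2+\lambda^2)^{1/2}\tT_\lambda = \Id$. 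Hence $\tF_\lambda$ extends to a bounded operator of norm at most one, and $\Gamma$-invariance is immediate: $\tD$ and $\rho^2+\lambda^2$ are both $\Gamma$-invariant, so $\tT_\lambda \in \maM$ by functional calculus, and so $\tF_\lambda = \tD \tT_\lambda \in \maM$.

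For item (2), self-adjointness of $\tQ = \tT_0 \tD \tT_0$ is routine: the bounded extension of the densely defined $\tT_0 \tD$ coincides with $\tF_0^*$, so $\tQ = \tF_0^* \tT_0 = (\tT_0 \tF_0)^*$ is visibly its own adjoint. For the parametrix property, it is enough to show that $\tD \tQ - \Id$ is $\tau$-compact, since then the relation $\tQ \tD - \Id = (\tD \tQ - \Id)^*$ (valid by self-adjointness of $\tD$ and $\tQ$) gives the other side. Because $\tT_0$ lands in $\Dom(\tD)$, the operator $\tD \tQ$ is defined on all of $L^2(\tM,\tS)$ and associativity plus boundedness of $\tF_0$ force $\tD \tQ = \tF_0^2$. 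Rewriting $\tD\tT_0 = \tT_0\tD + [\tD,\tT_0]$ and invoking the functional calculus identity $\tT_0 (\tD^2+\rho^2)\tT_0 = \Id$, I obtain the key decomposition
\[
\tD\tQ - \Id = -\tT_0 \rho^2 \tT_0 + [\tD,\tT_0]\tF_0.
\]

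The first term is $\tau$-compact by direct application of the Rellich Lemma \ref{Rellich}: since $\rho$ has compact support $K \subset M$ and $\tT_0$ is bounded from $L^2(\tM,\tS)$ into $L^2_1(\tM,\tS)$, the operator $\rho\tT_0$ factors through the inclusion $L^2_{1,K}(\tM,\tS) \hookrightarrow L^2(\tM,\tS)$ via $\Gamma$-invariant maps, hence lies in $\maK(\maM,\tau)$; writing $\tT_0 \rho^2 \tT_0 = (\rho\tT_0)^* (\rho\tT_0)$ gives $\tau$-compactness. The principal difficulty, and the main obstacle of the proof, is to show that $[\tD,\tT_0] \in \maK(\maM,\tau)$ (from which $[\tD,\tT_0]\tF_0$ inherits $\tau$-compactness via item (1)). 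For this I would use the integral representation
\[
\tT_0 = \frac{1}{\pi} \int_0^{+\infty} (\tD^2+\rho^2+\mu)^{-1} \frac{d\mu}{\sqrt{\mu}},
\]
which yields
\[
[\tD,\tT_0] = -\frac{1}{\pi} \int_0^{+\infty} (\tD^2+\rho^2+\mu)^{-1}\, [\tD,\rho^2]\, (\tD^2+\rho^2+\mu)^{-1}\, \frac{d\mu}{\sqrt{\mu}}.
\]
The commutator $[\tD,\rho^2] = 2\rho[\tD,\rho]$ is a first-order differential operator with coefficients compactly supported in $K$, so each resolvent factor multiplied by this cutoff factors through $L^2_{1,K}(\tM,\tS)$ and is $\tau$-compact by Rellich. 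The remaining technical point is to establish norm estimates in $\mu$ guaranteeing that the integral converges in the $\tau$-compact operator norm, in direct parallel with the Hilbert module computation recalled earlier from \cite{XieYu}.
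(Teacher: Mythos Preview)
Your proposal is correct and follows essentially the same strategy as the paper. A few minor differences and one small slip are worth noting.

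For item (1), the paper argues that $\tT_\lambda(\tD^2+\rho^2+\lambda^2)\tT_\lambda=\Id$ and that $\rho^2+\lambda^2$ is bounded, hence $\tT_\lambda\tD^2\tT_\lambda$ is bounded, from which boundedness of $\tF_\lambda$ follows. Your direct quadratic-form inequality $\|\tD\sigma\|^2\le\|(\tD^2+\rho^2+\lambda^2)^{1/2}\sigma\|^2$ is a cleaner route and gives the sharper bound $\|\tF_\lambda\|\le 1$.

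For item (2), the paper expands $\tF^2=\tD\tT\tD\tT$ by moving the left $\tD$ past $\tT$, obtaining $\tF^2-\Id=-\rho^2\tT^2-\tD[\tD,\tT]\tT$, and must therefore show that $\tD[\tD,\tT]$ (with the extra unbounded $\tD$) is $\tau$-compact. You instead move the middle $\tD$ to the left, obtaining $\tD\tQ-\Id=-\tT\rho^2\tT+[\tD,\tT]\tF_0$, so you only need $[\tD,\tT]$ itself to be $\tau$-compact, the $\tD$ being absorbed into the bounded $\tF_0$. Both decompositions are handled by the same integral representation and the Rellich lemma; yours is marginally simpler. The paper uses the parametrization $\tT=\frac{2}{\pi}\int_0^\infty(\tD^2+\rho^2+\lambda^2)^{-1}d\lambda$, which is your formula after the substitution $\mu=\lambda^2$, and gives the explicit norm bound $\|[\rho^2,\tD]\|/(c+\lambda^2)^2$ ensuring norm convergence---this is exactly the ``remaining technical point'' you allude to.

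One slip: the commutator $[\tD,\rho^2]=c(d\rho^2)$ is Clifford multiplication by a compactly supported one-form, hence a \emph{zeroth}-order operator, not first-order as you wrote. This only helps your argument.
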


\begin{proof} 
We  notice again by the properties of the continuous functional calculus that
$$
\tT_\lambda (\tD^2+\rho^2+\lambda^2) \tT_\lambda = \Id,
$$
say is densely defined and extends to a bounded operator which is the identity operator. Therefore since $\rho^2+\lambda^2$ is bounded on $\tM$, we deduce that the operator $\tT_\lambda \tD^2\tT_\lambda$ also extends to a bounded operator on $L^2(\tM, \tS)$. Hence we deduce by standard arguments that the operator $\tF_\lambda$ also extends to a bounded operator on $L^2(\tM, \tS)$. 
On the other hand the $\Gamma$-invariance is obvious by construction since $\rho$ is pulled-back from $M$, hence $\tF_\lambda\in \maM$.

 We now prove that $\tF=\tF_0$ is Fredholm with respect to $\maM$ with quasi-inverse given by $\tF$ itself, this will finish the proof. 
 Notice that we can use the following integral expression where   one easily checks that the integral converges in the operator norm:
$$
\tT=\frac{2}{\pi} \int_0^{+\infty} (\tD^2+\rho^2+\lambda^2)^{-1} d\lambda.
$$
This allows to see for instance that the commutator $[\tD, \tT]$ extends to a bounded operator defined by the formula:
$$
[\tD, \tT]=\frac{2}{\pi} \int_0^{+\infty} (\tD^2+\rho^2+\lambda^2)^{-1} [\rho^2, \tD] (\tD^2+\rho^2+\lambda^2)^{-1} \, d\lambda,
$$
and the operator $[\rho^2, \tD]$ being bounded while $\tD^2+\rho^2\geq c\Id$, we can deduce the
$$
\vert\vert (\tD^2+\rho^2+\lambda^2)^{-1} [\rho^2, \tD] (\tD^2+\rho^2+\lambda^2)^{-1}\vert\vert \leq \vert\vert [\rho^2, \tD]\vert\vert \frac{1}{(c+\lambda^2)^2}.
$$
Moreover, 
$$
\tF^2 = \tD\tT\tD\tT=\tD^2 \tT^2 - \tD [\tD, \tT] \tT=\Id - \rho^2 \tT^2 - \tD [\tD, \tT] \tT.
$$
Similarly, we can write
\begin{eqnarray*}
\tD [\tD, \tT]  & = & \frac{2}{\pi} \int_0^{+\infty} \tD [\tD, (\tD^2+\rho^2+\lambda^2)^{-1}] d\lambda\\
& = &  \frac{2}{\pi} \int_0^{+\infty} \tD (\tD^2+\rho^2+\lambda^2)^{-1} [\rho^2, \tD] (\tD^2+\rho^2+\lambda^2)^{-1} d\lambda
\end{eqnarray*}
Since $\rho$ and $[\tD, \rho]$ are both $\Gamma$-compactly supported zero-th order $\Gamma$-invariant operators, the operator 
$$
[\rho^2, \tD] (\tD^2+\rho^2+\lambda^2)^{-1}
$$ 
is a compact operator with respect to $\maM$. Indeed,  the operator $(D^2+\rho^2+\lambda^2)^{-1}$ is bounded below from $L^2(\tM, \tS)$ to $L_1^2(\tM, \tS)$. On the other hand applying $[\rho^2, \tD]$ sends $L_1^2(M, S; \mu)$ to the subspace $L_{1, \Supp (\rho)}^2(M, S; \mu)$ of $L_1^2(M, S; \mu)$. Now the Rellich lemma \ref{Rellich}  allows to conclude that  $[\rho^2, \tD] (\tD^2+\rho^2+\lambda^2)^{-1}$ is compact with respect to the von Neumann algebra $\maM$. By the first item, the operator $\tD (\tD^2+\rho^2+\lambda^2)^{-1}$ is bounded so that for any $\lambda \geq 0$ the operator 
$$
\tD (\tD^2+\rho^2+\lambda^2)^{-1} [\rho^2, \tD] (\tD^2+\rho^2+\lambda^2)^{-1}
$$
is compact with respect to the von Neumann algebra $\maM$. By the operator norm convergence of the integral, we deduce that $\tD [\tD, \tT]$ is compact with respect to the von Neumann algebra $\maM$. Finally, the same argument again allows to prove that the operator $\rho^2 \tT^2$ is also compact with respect to $\maM$ so that the proof is complete.

\end{proof}


\begin{proof} (of Theorem \ref{Main})

The self-adjoint $\Gamma$-invariant operator $\tQ=(\tD^2+\rho^2)^{-1/2} \tD (\tD^2+\rho^2)^{-1/2}$ is an odd for the grading parametrix for $\tD$ modulo compact operators with respect to  $\maM$. Indeed, we have proved that $\tF^2-I$  is a compact operator with respect to the von Neumann algebra $\maM$, hence so is its adjoint, say:
$$
\Id - \tD\tQ = \tS \in \maK (\maM, \tau_\Gamma)\text{ and } \Id - \tQ\tD = \tS^* \in \maK (\maM, \tau_\Gamma).
$$
Hence, if we denote as before by  $P_\ep$ the orthogonal projection given by the functional calculus $P_\ep:=1_{[0, \ep]} (\tD^2)$, we can write
$$
P_\ep -  P_\ep \tQ(\tD P_\ep) =  P_\ep \tS^* P_\ep.
$$
If $\sigma\in \Im (P_\ep)$ then we thus get
$$
\vert\vert P_\ep \tS^* \sigma\vert \vert \geq (1- \sqrt{\ep} \vert\vert \tQ\vert \vert )\vert\vert \sigma \vert\vert.
$$
Therefore, for $\ep>0$ small enough, the operator $P_\ep \tS^*P_\ep$ restricted to the image of $P_\ep$ is injective with closed range, and is therefore an isomorphism between $\Im (P_\ep)$ and $\Im (P_\ep \tS^*P_\ep)$. Since $P_\ep \tS^*P_\ep$ is compact with respect to the von Neumann algebra $P_\ep \maM P_\ep$ due to the same property for $\tS^*$ with respect to $\maM$, we deduce that $P_\ep$ must be compact with respect to $P_\ep \maM P_\ep$, that is that the projection $P_\ep$ is compact with respect to $\maM$. We conclude again that $P_\ep$ has finite $\tau$-trace as allowed since any $\tau$-compact projection is $\tau$-finite. 
\end{proof}

\bigskip

\bibliographystyle{alpha}

\end{document}